\newtheorem{theorem}{Theorem}[section]
\newtheorem{lemma}[theorem]{Lemma}
\newtheorem{proposition}[theorem]{Proposition}
\numberwithin{equation}{subsection}
\newtheorem{definition}[theorem]{Definition}
\title{The ternary Goldbach problem with two Piatetski-Shapiro primes and a prime with a missing digit}
\author{Helmut Maier and Michael Th. Rassias}
\date{\today}
\address{Department of Mathematics, University of Ulm, Helmholtzstrasse 18, 89081 Ulm, Germany.}
\email{helmut.maier@uni-ulm.de}
\address{Institute of Mathematics, University of Zurich, CH-8057, Zurich, Switzerland 
\&
Moscow Institute of Physics and Technology,
141700 Dolgoprudny,
Institutskiy per, d. 9,
Russia 
\& Institute for Advanced Study, Program in Interdisciplinary Studies,
1 Einstein Dr, Princeton, NJ 08540, USA.}\email{michail.rassias@math.uzh.ch}\thanks{}
\begin{document}

 \maketitle
 
\begin{abstract} 
Let 
$$\gamma^*=\frac{8}{9}+\frac{2}{3}\:\frac{\log(10/9)}{\log 10}\:(\approx 0.919\ldots)\:.$$
Let $\gamma^*<\gamma_0\leq 1$, $c_0=1/\gamma_0$ be fixed. Let also $a_0\in\{0,1,\ldots, 9\}$.\\
We prove on assumption of the Generalized Riemann Hypothesis that each sufficiently large odd integer $N_0$ can be represented in the form 
$$N_0=p_1+p_2+p_3\:,$$
where the $p_i$ are of the form $p_i=[n_i^{c_0}]$, $n_i\in\mathbb{N}$, for $i=1,2$ and the decimal expansion of $p_3$ does not contain the digit $a_0$.\\
The proof merges methods of J. Maynard from his paper on the infinitude of primes with restricted digits, results of A. Balog and J. Friedlander on Piatetski-Shapiro primes and the Hardy-Littlewood circle method in two variables. This is the first result on the ternary Goldbach problem with primes of mixed type which involves primes with missing digits. \\ \\
 \textbf{Key words.} Ternary Goldbach problem; Generalized Riemann Hypothesis; Hardy-Littlewood circle method; Piatetski-Shapiro primes; primes with missing digit.\\ 
\textbf{2010 Mathematics Subject Classification:} 11P32, 11N05, 11A63.%
\newline
\end{abstract}

\section{Introduction and statement of result}

The ternary Goldbach problem was treated by Vinogradov \cite{vino} (see also \cite{rasvino}):\\
Let
\[
R(N_0)=\sum_{\substack{(p_1,p_2,p_3)\\ p_1+p_2+p_3=N_0}} (\log p_1)(\log p_2)(\log p_3) \tag{1.1}
\]
(Here and in the sequel the letter $p$ denotes primes).\\
Then
\[
R(N_0)=\frac{1}{2}\:\mathfrak{S}(N_0)N_0^2+O_A\left(\frac{N_0^2}{\log^A N_0}\right)\:, \tag{1.2}
\]
for arbitrary $A>0$, where  $\mathfrak{S}(N_0)$ is the singular series
\[
\mathfrak{S}(N_0)=\prod_{p\mid N_0}\left(1-\frac{1}{(p-1)^2}\right)\prod_{p\nmid N_0}\left(1+\frac{1}{(p-1)^3}\right)\:. \tag{1.3}
\]
The relation (1.2) implies that each sufficiently large odd integer is the sum of three primes.\\
Helfgott \cite{helfgott} recently showed that this is true for all odd $N\geq 7$. Later on, solutions of
$$p_1+p_2+p_3=N_0$$
with the $p_i$ taken from special sets $S_i$ were investigated.\\
Piatetski-Shapiro \cite{Piatetski} proved that for any fixed $c_0\in(1,12/11)$ the sequence $([n^{c_0}])_{n\in\mathbb{N}}$ contains infinitely many prime numbers. The interval for $c_0$ was sebsequently improved many times (cf. \cite{Deshouillers}, \cite{Heath-Brown}, \cite{Kolenski},  \cite{Rivat}).
Balog and Friedlander \cite{balog} considered the ternary Goldbach problem with variables restricted to Piatetski-Shapiro primes. They proved that for any fixed $c_0$ with $1<c_0<21/20$ every sufficiently large odd integer $N_0$ can be  represented in the form 
$$N_0=p_1+p_2+p_3\:, \ \ \text{with $p_i=[n_i^{c_0}]$}\:,$$
for any $n_i\in\mathbb{N}$.\\
For other combinations of the sets $S_i$, cf. \cite{balog}, \cite{jia}, \cite{Kumchev}, \cite{Teravainen}. Additionally, the authors in \cite{maier_rassias_vinogradov} proved that under the assumption of the Generalized Riemann Hypothesis each sufficiently large odd integer can be expressed as the sum of a prime and two isolated primes.\\
The other type of primes entering our hybrid theorem are primes with missing digits.\\
In many papers numbers with restricted digits have been investigated (cf. \cite{Banks1}, \cite{Banks2}, \cite{Bourgain}, \cite{Dartyge1}, \cite{Dimitrov1},  \cite{Dimitrov2},  \cite{Dartyge2}, \cite{Drmota}, \cite{Erdos1}, \cite{Erdos2}, \cite{Konyagin}, \cite{Mauduit}). These investigations culminated in the work of Maynard \cite{maynard}, who proved the existence of infinitely many primes with restricted digits. In \cite{maynard} Maynard proved the following:\\
\textit{Let $a_0\in\{0,1,\ldots,9 \}$. Then there are infinitely many primes, whose decimal expansion does not contain the digit $a_0$.}\\
Our result merges methods of J. Maynard from \cite{maynard}, results of A. Balog and J. Friedlander on Piatetski-Shapiro primes \cite{balog} and the Hardy-Littlewood circle method in two variables. This is the first result on the ternary Goldbach problem with primes of mixed type which involves primes with missing digits. We shall prove the following:

\begin{theorem}\label{thm11}
Assume the Generalized Riemann Hypothesis (GRH). Let 
$$\gamma^*=\frac{8}{9}+\frac{2}{3}\:\frac{\log(10/9)}{\log 10}\:(\approx 0.919\ldots)\:.$$
Let $\gamma^*<\gamma_0\leq 1$, $c_0=1/\gamma_0$ be fixed. Let also $a_0\in\{0,1,\ldots, 9\}$.\\
Then each sufficiently large odd integer $N_0$ can be represented in the form 
$$N_0=p_1+p_2+p_3\:,$$
where the $p_i$ are of the form $p_i=[n_i^{c_0}]$, $n_i\in\mathbb{N}$, for $i=1,2$ and the decimal expansion of $p_3$ does not contain the digit $a_0$.\\

\end{theorem}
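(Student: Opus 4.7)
The approach is a Hardy--Littlewood circle method combining the Balog--Friedlander treatment of Piatetski--Shapiro primes with Maynard's Fourier-analytic framework for missing-digit primes. Let $\mathcal{P}_{c_0}$ denote the set of Piatetski--Shapiro primes of exponent $c_0$ and $\mathcal{A}$ the set of positive integers whose decimal expansion avoids the digit $a_0$. Introduce the exponential sums
\[
S_1(\alpha) = S_2(\alpha) = \sum_{\substack{p \leq N_0\\ p\in\mathcal{P}_{c_0}}} (\log p)\, e(\alpha p), \qquad S_3(\alpha) = \sum_{\substack{p \leq N_0\\ p\in\mathcal{A}}} (\log p)\, e(\alpha p).
\]
The weighted count $R(N_0)$ of representations $N_0=p_1+p_2+p_3$ with $p_1,p_2\in\mathcal{P}_{c_0}$ and $p_3\in\mathcal{A}$ is $\int_0^1 S_1S_2S_3\,e(-\alpha N_0)\,d\alpha$, and it suffices to prove positivity of $R(N_0)$ for all large odd $N_0$. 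Decompose $[0,1]=\mathfrak{M}\sqcup\mathfrak{m}$ into major and minor arcs centred at rationals of small denominator.

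On $\mathfrak{M}$, I expand each $S_i(\alpha)$ at $\alpha=a/q$ using asymptotics for the relevant primes in residue classes. For $S_1,S_2$ this is the Balog--Friedlander input for Piatetski--Shapiro primes in arithmetic progressions, uniform enough in $q$ for circle-method purposes. For $S_3$ the analogous asymptotic for missing-digit primes in progressions is obtained by adapting Maynard's sieve-plus-Fourier machinery; GRH is invoked precisely here to secure a power-saving error term uniform in moduli up to a small power of $N_0$. Integrating the product yields
\[
\int_{\mathfrak{M}} S_1S_2S_3\,e(-\alpha N_0)\,d\alpha \;=\; \tfrac{1}{2}\,\mathfrak{S}^{*}(N_0)\,\mathcal{I}(N_0)\;+\;\text{(admissible error)},
\]
where $\mathfrak{S}^{*}$ is a positive-for-odd-$N_0$ modified singular series encoding the densities of the three sets, and $\mathcal{I}$ is the associated three-fold archimedean singular integral.

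On $\mathfrak{m}$ the strategy is Cauchy--Schwarz:
\[
\Big|\int_{\mathfrak{m}} S_1S_2S_3\,e(-\alpha N_0)\,d\alpha\Big| \;\leq\; \Big(\sup_{\alpha\in\mathfrak{m}}|S_3(\alpha)|\Big)\Big(\int_0^1 |S_1|^2\,d\alpha\Big)^{1/2}\Big(\int_0^1 |S_2|^2\,d\alpha\Big)^{1/2}.
\]
By Plancherel each $L^2$-norm reduces to counting Piatetski--Shapiro prime powers up to $N_0$, which is $\ll N_0^{\gamma_0}/\log N_0$. The remaining pointwise bound on $S_3$ off the major arcs is the crucial input, supplied by Maynard's decoupling and geometry-of-numbers analysis of the missing-digit Fourier transform once the von Mangoldt weight has been unwound via a Heath-Brown (or Vaughan) identity. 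The threshold $\gamma^{*}=\tfrac{8}{9}+\tfrac{2}{3}\tfrac{\log(10/9)}{\log 10}$ is chosen precisely so that, as soon as $\gamma_0>\gamma^{*}$, the product of this sup-norm bound with the two Piatetski--Shapiro $L^2$ masses falls below the main term extracted from $\mathfrak{M}$. A complementary sub-region of $\mathfrak{m}$, on which instead one of $S_1,S_2$ is bounded pointwise by the Balog--Friedlander exponential-sum estimate, can be treated symmetrically.

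The main obstacle is the uniform minor-arc bound for $S_3(\alpha)$. Maynard's results control the unweighted missing-digit exponential sum $\sum_{n\in\mathcal{A}} e(\alpha n)$ very well, but promoting this to a prime-weighted sum produces bilinear forms that must be attacked by interleaving his Fourier/lattice arguments with GRH-level prime input. Matching the exponent coming out of this bilinear analysis against the Piatetski--Shapiro density $N_0^{\gamma_0-1}$ is exactly what pins down the delicate constant $\gamma^{*}$; the remainder of the argument is the quantitative verification that this balance closes with room to spare whenever $\gamma_0>\gamma^{*}$, so that the major-arc main term dominates and $R(N_0)>0$.
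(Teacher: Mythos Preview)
Your outline has the right cast of characters but reverses the critical roles in the minor-arc argument, and this is a genuine gap, not a cosmetic difference.

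You propose to take the supremum over $\mathfrak{m}$ of $|S_3(\alpha)|$ (the missing-digit prime sum) and pair it with the $L^2$-norms of the two Piatetski--Shapiro sums. The trouble is that no usable pointwise bound for $S_3$ on minor arcs is available. Maynard's method does \emph{not} produce an asymptotic for $\#\{p\in\mathcal{A}\}$, nor a minor-arc estimate for $\sum_{p\in\mathcal{A}}e(\alpha p)$; it produces only a lower bound of the right order, obtained by a Harman--Buchstab decomposition in which several terms are discarded by positivity and the surviving Buchstab integrals are shown numerically to sum to less than $1$. For the same reason, your major-arc claim that $S_3(a/q)$ admits an asymptotic via ``missing-digit primes in progressions'' is not available either, even under GRH.

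What the paper actually does is carry that entire sieve decomposition through the circle method. One writes $S_{\mathcal{A}^*\cap\mathbb{P}}(\theta)=\sum_j E_j(\theta)$ as a sum of exponential sums coming from Buchstab iterations, and for each piece forms $J(E_j)=\frac{1}{X}\sum_{a}E_j(a/X)\,S_{c_0}(a/X)^2\,e(-N_0a/X)$. Type~I and Type~II pieces are shown to be negligible (Propositions~4.1, 4.2), the remaining pieces are evaluated via Proposition~4.3, and positivity follows from Maynard's numerical inequality $I_1+\cdots+I_9<0.996$. On the minor arcs the supremum is taken on $S_{c_0}$, using the Balog--Friedlander bound $|S_{c_0}(\theta)|\le X^{1-\delta_0}$ for $9(1-\gamma_0)+12\delta_0<1$; Cauchy--Schwarz is then applied to $E_j$ and the second copy of $S_{c_0}$, with Parseval giving $\big(\sum_a|E_j(a/X)|^2\big)^{1/2}\ll (X\#\mathcal{A})^{1/2}$. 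The threshold $\gamma^*=\tfrac{8}{9}+\tfrac{2}{3}\frac{\log(10/9)}{\log 10}$ is exactly the condition that such a $\delta_0$ exists making $(\#\mathcal{A})^{1/2}X^{3/2-\delta_0}$ beat the main term $X\,\#\mathcal{A}^*$: the $8/9$ is Balog--Friedlander's exponent, and the $\frac{2}{3}\frac{\log(10/9)}{\log 10}$ is the Parseval loss from $\#\mathcal{A}=X^{\log 9/\log 10}$. So $\gamma^*$ is \emph{not} determined by any sup-norm bound on $S_3$; it comes from the Piatetski--Shapiro side together with the density of missing-digit integers.

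In short: you need to move the Harman sieve inside the circle method rather than treat $S_3$ as a black box, swap which sum carries the sup-norm on the minor arcs, and replace the hoped-for asymptotic main term by the lower bound with the Buchstab-integral factor.
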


\section{Outline and some basic definitions}

Our argument is closely related to the work of Maynard \cite{maynard}. Important ideas use a sieve decomposition of the counting function $\#\{p\in\mathcal{A}\}$, which is based on ideas of Harman \cite{harman}, as well as the discrete circle method.\\
We recall the following definitions from \cite{maynard} which we complement by a few new definitions.

\begin{definition}\label{def21}
Let $a_0\in\{0,1,\ldots,9\}$, $k\in\mathbb{N}$ and let
\begin{align*}
&\mathcal{A}:=\left\{  \sum_{0\leq i\leq k} n_i 10^i \::\: n_i\in\{0,1,\ldots,9\}\setminus \{a_0\}\right\} \:,\\
&X:=10^k\:,\ \ \mathcal{B}:=\{n\leq X,\ n\in\mathbb{N}\}\:,
\end{align*}
\begin{align*}
&\mathbb{P}\ \text{the set of prime numbers},\\
&S_{\mathcal{A}}(\theta):=\sum_{a\in\mathcal{A}} e(a\theta)\:,\ \ 
S_{\mathbb{P}}(\theta):=\sum_{p\leq x}e(p\theta)\:,\ \ 
S_{\mathcal{A}\cap\mathbb{P}}(\theta):=\sum_{p\in \mathcal{A}\cap \mathbb{P}}e(p\theta)\:.
\end{align*}
Let $\mathcal{C}$ be a set of integers. We define the characteristic function $1_{\mathcal{C}}$ by
\begin{eqnarray}
1_{\mathcal{C}}(n):=\left\{ 
  \begin{array}{l l}
    1\:, & \quad \text{if $n\in \mathcal{C}$}\vspace{2mm}\\ 
    0\:, & \quad \text{if $n\not\in \mathcal{C}$}\:.\\
  \end{array} \right.
\nonumber
\end{eqnarray}
For $d\in\mathbb{N}$ we set
$$\mathcal{C}_d:=\{c\::\: cd\in\mathcal{C}\}.$$
The sifted set $\mathcal{U}(\mathcal{C}, z)$ is defined by
$$\mathcal{U}(\mathcal{C}, z):=\{c\in\mathcal{C}\::\: p\mid c\ \Rightarrow\ p>z\}\:.$$
The sieving function $S(\mathcal{C}, z)$ - the counting function of $\mathcal{U}(\mathcal{C}, z)$ - is given by
$$S(\mathcal{C}, z):=\#\mathcal{U}(\mathcal{C}, z)=\#\{c\in \mathcal{C}\::\: p\mid c\ \Rightarrow \ p>z\}\:.$$
We let
\begin{eqnarray}
w_n:=1_{\mathcal{A}}(n)-\frac{\kappa_{\mathcal{A}}\#\mathcal{A}}{\#\mathcal{B}}\:,\ \ \ 
\kappa_{\mathcal{A}}:=\left\{ 
  \begin{array}{l l}
    \dfrac{10(\Phi(10)-1)}{9\Phi(10)}\:, & \quad \text{if $(10,a_0)=1$}\vspace{2mm}\\ 
    \dfrac{10}{9}\:, & \quad \text{otherwise}\:,\\
  \end{array} \right.
\nonumber
\end{eqnarray}
$$S_d(z):=\sum_{\substack{n<X/d\\ p\mid n\ \Rightarrow\ p>z}} w_{nd}=S(\mathcal{A}_d, z)-\frac{\kappa_{\mathcal{A}} \#\mathcal{A}}{X}\: S(\mathcal{B}_d, z)\:,$$
$$1_{\mathcal{A}}(n)\ \ \text{is called the $\mathcal{A}-$part of } w_n\:,$$
$$S(\mathcal{A}_d, z)\ \ \text{is called the $\mathcal{A}-$part of } S_d(z)\:,$$
the $\mathcal{B}$-parts are defined analogously.\\
We also define the exponential sums 
$$S(\mathcal{C}, z, \theta):=\sum_{n\in\mathcal{U}(\mathcal{C},z)}e(n\theta)\:,$$
$$S_d(z, \theta):=\sum_{\substack{n<X/d\\ p\mid n\ \Rightarrow\ p>z}}w_{nd}e(n\theta)=S(\mathcal{A}_d,z,\theta)-\frac{\kappa_{\mathcal{A}}\#\mathcal{A}}{X}S(\mathcal{B}_d, z, \theta)\:,\ \ (\theta\in\mathbb{R})\:.$$
\end{definition}

The essential idea of Harman's sieve is contained in Harman \cite{harman}, Theorem 3.1 from \cite{harman} (The Fundamental Theorem).\\
Suppose that for any sequences of complex numbers, $a_m, b_n$, that satisfy $|a_m|\leq 1$, $|b_n|\leq 1$ we have for some $\lambda>0$, $\alpha>0$, $\beta\leq 1/2$, $M\geq 1$ that 
\[
\sum_{\substack{mn\in\mathcal{A}\\ m\leq M}} a_m=\lambda \sum_{\substack{mn\in\mathcal{B}\\ m\leq M}}a_m+O(Y) \tag{3.3.1}
\]
and
\[
\sum_{\substack{mn\in\mathcal{A}\\ X^{\alpha}\leq m\leq X^{\alpha+\beta}}} a_mb_n=\lambda \sum_{mn\in\mathcal{B}}a_mb_n+O(Y)\:, \tag{3.3.2}
\]
where $Y$ is a suitably chosen constant.\\
Let $c_r$ be a sequence of complex numbers, such that $|c_r|\leq 1$, and if $c_r\neq 0$, then 
\[
p\mid r\ \Rightarrow\ p>x^{\epsilon}\:,\ \text{for some}\ \epsilon>0.  \tag{3.3.3}
\]
Then, if $X^{\alpha}<M$, $2R<\min(X^\alpha, M)$ and $M>X^{1-\alpha}$, if $2R>X^{\alpha+\beta}$, we have
\[
\sum_{r\sim R}c_rS(\mathcal{A}_r, X^\beta)=\lambda\sum_{r\sim R} c_r S(B_r, X^\beta)+O(Y\log^3X)\:. \tag{3.3.4}
\]
The equation (3.3.1) is known as type I information, whereas as (3.3.2) is known as type II information.\\
In the application of Theorem 3.1, information about a (complicated) set $\mathcal{A}$ is obtained from that of a (simple) set $\mathcal{B}$.\\
In Maynard's paper \cite{maynard} the sets $\mathcal{A}$ and $\mathcal{B}$ are those from Definition \ref{def21}. In a first step of the sieve decomposition the counting-function of interest $\#\{p\in \mathcal{A}\}$ is broken up as follows:
\begin{align*}
\#\{p\in \mathcal{A}\}&=\#\{p\in\mathcal{A}\::\: p>X^{1/2}\}+O(X^{1/2})\\
&=S_1(z_4)+(1+o(1))\:\frac{\kappa_{\mathcal{A}}\#\mathcal{A}}{\log X}\ \ \text{(here $z_4=X^{1/2}$)}\:.
\end{align*}
The function $S_1$ is now replaced in a series of steps by other terms of the form $S_d$. These steps consist in the application of \textit{Buchstab's recursion}:\\
Let $u_1< u_2$. Then 
\[
S(\mathcal{C}, u_2)=S(\mathcal{C}, u_1)-\sum_{u_1<p\leq u_2}S(\mathcal{C}_p, p)\:. \tag{2.1}
\]
In the transformation of the form
$$S_d(z)=S(\mathcal{A}_d, z)-\frac{\kappa_{\mathcal{A}}\#\mathcal{A}}{\log X}\:S(\mathcal{B}_d, z)\:,$$
(2.1) is now applied separately with $C=\mathcal{A}_d$ and $C=\mathcal{B}_d$ and we get the recursion:
$$S_1(u_2)=S_1(u_1)-\sum_{u_1<p\leq u_2}S_p(p)\:.$$
An important observation is that 
``the counting function version'' of the Buchstab recursion is linked to a ``characteristic function version''.
\[
1_{\mathcal{U}(\mathcal{C}, u_2)}(n)= 1_{\mathcal{U}(\mathcal{C}, u_1)}(n)-\sum_{u_1<p\leq u_2}1_{\mathcal{U}(\mathcal{C}_p, p)}(n)\:.  \tag{2.2}
\]
In our paper the discrete circle method is applied and therefore we multiply (2.2) with the exponential function $e(n\theta)\:(=e^{2\pi i n\theta})$, to obtain:
\[
1_{\mathcal{U}(\mathcal{C}, u_2)}(n)e(n\theta)=1_{\mathcal{U}(\mathcal{C}, u_1)}(n)e(n\theta)-\sum_{u_1<p\leq u_2}1_{\mathcal{U}(\mathcal{C}_p, p)}(n) e(n\theta)\:.  \tag{2.3}
\]
We get the following version of Buchstab's recursion, which we state as
\begin{lemma}\label{lem21}
Let $u_1<u_2$. Then 
\[
S(\mathcal{C}, u_2, \theta)=S(\mathcal{C}, u_1, \theta)-\sum_{u_1<p\leq u_2}S(\mathcal{C}_p, p, \theta)\:. \tag{2.4}
\]
\end{lemma}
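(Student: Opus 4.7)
The plan is to obtain (2.4) by summation from the pointwise identity (2.3), which has already been established in the discussion preceding the lemma as a direct consequence of the characteristic-function Buchstab identity (2.2). Concretely, I would begin from
\[
1_{\mathcal{U}(\mathcal{C}, u_2)}(n)e(n\theta)=1_{\mathcal{U}(\mathcal{C}, u_1)}(n)e(n\theta)-\sum_{u_1<p\leq u_2}1_{\mathcal{U}(\mathcal{C}_p, p)}(n) e(n\theta)
\]
and sum over all integers $n$. Since in the intended application $\mathcal{C}\subset\mathcal{B}=\{n\leq X\}$, each of the sets $\mathcal{U}(\mathcal{C},u_1)$, $\mathcal{U}(\mathcal{C},u_2)$, and $\mathcal{U}(\mathcal{C}_p,p)$ is finite, so every sum in sight is finite and the order of summation may be freely interchanged.

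After summation, the left-hand side matches $\sum_n 1_{\mathcal{U}(\mathcal{C}, u_2)}(n)e(n\theta)=S(\mathcal{C},u_2,\theta)$ by the definition given in Definition \ref{def21}, and the first term on the right-hand side similarly collapses to $S(\mathcal{C},u_1,\theta)$. For the second term on the right, swapping the finite sums over $p$ and $n$ produces
\[
\sum_{u_1<p\leq u_2}\sum_n 1_{\mathcal{U}(\mathcal{C}_p, p)}(n)e(n\theta)=\sum_{u_1<p\leq u_2}S(\mathcal{C}_p,p,\theta),
\]
which is exactly the right-hand side of (2.4). Thus (2.4) follows by a purely formal manipulation once (2.3) is in hand.

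The only genuine content is upstream, in the pointwise Buchstab identity (2.2), which was taken for granted before the lemma. That identity is the standard combinatorial observation that an $n$ lying in $\mathcal{U}(\mathcal{C},u_1)\setminus\mathcal{U}(\mathcal{C},u_2)$ is uniquely detected by its smallest prime factor $p\in(u_1,u_2]$, and the bookkeeping of this detection through the shifted set $\mathcal{C}_p$ is the core of Buchstab's recursion (2.1). I do not anticipate any serious obstacle here: once (2.2) is granted, multiplying by the character $e(n\theta)$ and summing is a one-line operation, and the proof of the lemma requires nothing further.
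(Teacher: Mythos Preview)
Your proposal is correct and is exactly the argument the paper has in mind: the paper does not give a separate proof of Lemma~\ref{lem21} but simply presents it as the result of summing the pointwise identity (2.3) over $n$, which is precisely what you do. Your remarks about finiteness and the interchange of sums are the only bookkeeping needed, and nothing further is required.
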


We introduce another modification in our paper. Instead of considering all the integers in $\mathcal{A}$ as possible candidates for our representation of $N_0$ we now only choose the integers from a subset $\mathcal{A}^*$ of $\mathcal{A}$, which are contained in a short subinterval of $\mathcal{B}$.

\begin{definition}\label{rdef23}
Let $H\in \mathbb{N}$, $H\leq k$. For 
$$n=\sum_{j=1}^k n_j 10^j,\ \ (n_j\in \{0, \ldots, 9\})$$
we write
$$n_{H,1}:= \sum_{j=k-H+1}^k n_j 10^j=: \tilde{n}_H\cdot 10^{k-H+1}$$
and
$$n_{H, 2}:= \sum_{j=0}^{k-H} n_j 10^j\:.$$
\end{definition}

\begin{lemma}\label{rlem24}
Let $n=\tilde{n}_H\cdot 10^{k-H+1}$ as in Definition \ref{rdef23}. Then
\[
n\in \mathcal{A}\ \ \text{if and only if}\ \ \tilde{n}_H\in \mathcal{A}\ \ \text{and}\ \ n_{H,2}\in\mathcal{A}\tag{2.1}
\]
There is an integer $\tilde{n}_H\in \mathcal{A}\cap [0, 10^{H-1}]$ such that for $n^*_H:= \tilde{n}_H 10^{k-H+1}$ we have the following:
\[
\left|n^*_H-5\cdot 10^{k-1}\right|\leq \frac{3}{2}\: 10^{k-2}  \tag{2.2}
\]
and for $n_{H, 2}\in \mathcal{B}^*:= [ n_H^*, n_H^*+10^{k-H} )$ the following holds:
\[
n^*_H+n_{H,2}\in \mathcal{A}\ \ \Rightarrow\ \ n_{H, 2}\in \mathcal{A}\:.  \tag{2.3}
\]
\end{lemma}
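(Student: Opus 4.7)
The plan is to establish (2.1)--(2.3) by elementary bookkeeping of decimal digits, with (2.2) reduced to a short case analysis on $a_0$; no analytic input is needed.

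For (2.1), I would observe that because $0\le n_{H,2}<10^{k-H+1}$ while $n_{H,1}=\tilde n_H\cdot 10^{k-H+1}$, no carries arise in the sum $n=n_{H,1}+n_{H,2}$. Hence the decimal digits of $n$ at positions $0,\dots,k-H$ coincide with those of $n_{H,2}$, while the digits at positions $k-H+1,\dots,k$ coincide with those of $\tilde n_H$. Membership of $n$ in $\mathcal{A}$ is equivalent to all of these digits lying in $\{0,\dots,9\}\setminus\{a_0\}$, which is (2.1).

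To secure (2.2), I would exhibit $\tilde n_H$ explicitly as an $(H-1)$-digit integer close to $5\cdot 10^{H-2}$ whose digits avoid $a_0$, via three sub-cases. If $a_0\notin\{0,5\}$, the choice $\tilde n_H=5\cdot 10^{H-2}$ (digits $5,0,\dots,0$) gives $n_H^*=5\cdot 10^{k-1}$ and (2.2) is trivial. If $a_0=0$, set $\tilde n_H=5\cdot 10^{H-2}+(10^{H-2}-1)/9$ (digits $5,1,\dots,1$), so that
\[
|n_H^*-5\cdot 10^{k-1}|=\frac{10^{k-1}-10^{k-H+1}}{9}\le \frac{10}{9}\cdot 10^{k-2}<\frac{3}{2}\cdot 10^{k-2}.
\]
If $a_0=5$, take $\tilde n_H=5\cdot 10^{H-2}-1$ (digits $4,9,\dots,9$); here $|n_H^*-5\cdot 10^{k-1}|=10^{k-H+1}\le 10^{k-2}$ provided $H\ge 3$. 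In each case $\tilde n_H\in\mathcal{A}\cap[0,10^{H-1}]$ and (2.2) holds for all sufficiently large $H$, which is built into the paper's framework.

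Finally, (2.3) follows by rerunning the digit decomposition of (2.1). Any element of $\mathcal{B}^*$ may be written as $n_H^*+r$ with $0\le r<10^{k-H}$; since $n_H^*$ is divisible by $10^{k-H+1}$, no carries occur and the digits of $n_H^*+r$ at positions $0,\dots,k-H$ are precisely those of $r$. Thus $n_H^*+r\in\mathcal{A}$ forces those low-order digits to avoid $a_0$, yielding $r\in\mathcal{A}$, which is (2.3). The only point of genuine care is the verification of (2.2) in the cases $a_0\in\{0,5\}$, where the naive choice $\tilde n_H=5\cdot 10^{H-2}$ must be perturbed and the constant $\tfrac{3}{2}$ is just tight enough to absorb the perturbation; everything else is mere tracking of decimal digits.
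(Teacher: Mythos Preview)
Your proof is correct and follows the same elementary strategy as the paper: (2.1) and (2.3) come from the absence of carries in the decimal decomposition, and (2.2) is settled by a short case analysis on $a_0$ with an explicit choice of $\tilde n_H$. The only cosmetic difference is the case split---the paper distinguishes $a_0=5$, $a_0=4$, and $a_0\notin\{4,5\}$ (using leading blocks $49$ and $509$), while you distinguish $a_0=5$, $a_0=0$, and $a_0\notin\{0,5\}$; both variants work equally well.
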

\begin{proof}
(2.1) is obvious. To show (2.2) and (2.3) we consider the following cases:\\
\textit{Case 1}: $a_0=5$, \textit{Case 2}: $a_0=4$, \textit{Case 3}: $a_0\not\in \{4, 5\}$.\\
Specifically we have:\\
\noindent \textit{Case 1}: Let $n_{k-i}\in \{0, \ldots, 9\}\setminus\{a_0\}$ for $2\leq i\leq H-1$. Then we may take
$$n^*=4,9\cdot 10^k+\sum_{j=k-H+1}^{k-2} n_j10^j\:.$$

\noindent \textit{Case 2}: Let  $n_{k-i}\in \{0, \ldots, 9\}\setminus\{a_0\}$ for $3\leq i\leq H-1$. Then we may take
$$n^*=5,09\cdot 10^k+\sum_{j=k-H+1}^{k-3} n_j10^j\:.$$
\noindent \textit{Case 3}: The choices for $n^*$ in cases 2 and 3 are both possible.
\end{proof}

\textit{Convention:} In the sequel we have many estimates and definitions containing positive constants $C_1, C_2, \ldots$ (actually powers $(\log X)^{C_i}$ ). The $C_i$ must satisfy certain conditions, which will be described. However, it will always be possible to choose the $C_i$, such that the $\min_{i} C_i$ is arbitrarily large. An estimate containing $O\left(D(x)(\log X)^{-\mathcal{A}}\right)$ ($D(x)$ a certain function of $X$) means that $A>0$ may be taken arbitrarily large if $\min_i C_i$ is sufficiently large.

\begin{definition}\label{rdef25}
We define $X$ by $2X\leq N_0< 20 X$. We then define 
\[
Int(N_0)=\left[ \frac{N_0-n_H^*}{2}-\frac{X}{8},\  \frac{N_0-n_H^*}{2}+\frac{X}{8} \right]\:, \tag{2.4}
\]
\[
S_{c_0}(\theta):=\frac{1}{\gamma}\ \sum_{\substack{p \in Int(N_0) \\ p=[n^{1/\gamma}]}} (\log  p)^{1-\gamma} e(p\theta)\:.   
\]
\end{definition}

Let $S\subseteq [1, X]$ be a set of positive integers and $v(n)$ be a sequence of real numbers. For the exponential sum
\[ 
E(\theta):= \sum_{n\in S} v(n) e(n\theta)  \tag{2.5}
\]
we define 
\[
J(E):= \frac{1}{X} \sum_{1\leq a \leq X} E\left( \frac{a}{X} \right) S_{c_0}^2\left( \frac{a}{X} \right) e\left(-N_0\frac{a}{X}\right)  \tag{2.6}
\]
\[
J(E, \tau):= \frac{1}{X} \sum_{\frac{a}{X}\in \mathcal{T}} E\left( \frac{a}{X} \right) S_{c_0}^2\left( \frac{a}{X} \right) e\left(-N_0\frac{a}{X}\right)  
\]
for a subset $\mathcal{T}\subseteq [0,1]$ and the mean-value
$$M(E):=\sum_{\substack{(m,p_2, p_3)\\ m\in S, p_2, p_3\in \mathbb{P}_{c_0}, p_i\in Int(N_0)\\ m+p_2+p_3=N_0}} p_2^{1-\gamma} p_3^{1-\gamma} (\log p_2)(\log p_3)v(m) \:,$$
The evaluation of $J(E)$ is also called the \textit{$a$-variable} circle method.

\begin{lemma}\label{rlem26}
We have
$J(E)=M(E)$.
\end{lemma}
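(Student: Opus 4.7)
The strategy is the standard discrete orthogonality argument modulo $X$. First I would substitute the definitions of $E(\theta)$ and $S_{c_0}^2(\theta)$ into the expression for $J(E)$, expanding $S_{c_0}^2(a/X)$ as a double sum over Piatetski-Shapiro primes $p_2, p_3 \in Int(N_0)$ and reordering the sums so that the outer sum ranges over triples $(m, p_2, p_3)$ and the inner sum over $a$ becomes
$$\frac{1}{X}\sum_{1 \leq a \leq X} e\!\left(\frac{a(m + p_2 + p_3 - N_0)}{X}\right).$$
By the orthogonality relation for additive characters mod $X$, this inner sum equals $1$ when $X \mid m + p_2 + p_3 - N_0$ and $0$ otherwise.

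The second step is to verify that the only multiple of $X$ realised as $m + p_2 + p_3 - N_0$ under the given constraints is $0$. Since each $p_i \in Int(N_0)$ satisfies $|p_i - (N_0 - n_H^*)/2| \leq X/8$, one has $p_2 + p_3 \in [N_0 - n_H^* - X/4,\ N_0 - n_H^* + X/4]$, and combined with $m \in S \subseteq [1,X]$ this yields
$$m + p_2 + p_3 - N_0 \in [\,1 - n_H^* - X/4,\ X - n_H^* + X/4\,].$$
Lemma \ref{rlem24} gives $|n_H^* - 5\cdot 10^{k-1}| \leq (3/2)\cdot 10^{k-2}$, so $n_H^*$ lies in roughly $[0.485\,X,\ 0.515\,X]$; hence the above interval is strictly contained in $(-X,X)$ and the only admissible solution of the congruence is the equation $m + p_2 + p_3 = N_0$ itself.

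Combining these two steps collapses $J(E)$ to a sum restricted to triples $(m,p_2,p_3) \in S \times (Int(N_0)\cap\mathbb{P}_{c_0})^2$ with $m + p_2 + p_3 = N_0$; the factor $v(m)$ carried by $E$, the two factors of $1/\gamma$ squared out of $S_{c_0}$, and the Piatetski-Shapiro weights in $S_{c_0}^2$ together reproduce exactly the summand $p_2^{1-\gamma} p_3^{1-\gamma}(\log p_2)(\log p_3)\,v(m)$ appearing in $M(E)$, giving $J(E)=M(E)$. The only nontrivial point in the whole argument is the range check guaranteeing that no spurious congruence $m + p_2 + p_3 \equiv N_0 \pmod{X}$ with $m + p_2 + p_3 \neq N_0$ occurs; this is precisely where the choice $|n_H^*|\approx X/2$ and the short length $X/4$ of $Int(N_0)$ are used, and it is the motivation for the localisation to $\mathcal{B}^*$ in Lemma~\ref{rlem24}. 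The remainder of the verification is mechanical bookkeeping of definitions.
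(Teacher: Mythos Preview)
Your proof is correct and follows exactly the approach the paper intends: the paper's own proof is the single line ``This follows by orthogonality.'' You have usefully spelled out the range check (via Lemma~\ref{rlem24} and the width $X/4$ of $Int(N_0)$) guaranteeing that the congruence $m+p_2+p_3\equiv N_0\pmod X$ forces equality, a point the paper leaves implicit.
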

\begin{proof}
This follows by orthogonality.
\end{proof}
Instead of $w_n$, $S_d(z)$ from Definition $\ref{def21}$ we now consider the expression given in 
\begin{definition}\label{rdef27}
We determine $H$ by $10^H=\lceil(\log X)^{C_1}\rceil$. Let $n^*=n^*_H$, which has been constructed in Lemma \ref{rlem24}, $\mathcal{B}^*$ as in Lemma \ref{rlem24} and $\mathcal{A}^*=\mathcal{A}\cap \mathcal{B}^*$.\\
We let
$$w_n^*:= 1_{\mathcal{A}^*}(n)-\frac{\kappa_{\mathcal{A}}\#\mathcal{A}^*}{\#\mathcal{B}^*}$$
$$S^*_d(z):=\sum_{\substack{n< X/d \\ p\mid n\ \Rightarrow p>z}} w^*_{nd}=S(\mathcal{A}_d^*, z)-\frac{\kappa_{\mathcal{A}}\#\mathcal{A}^*}{\#\mathcal{B}^*} S(B_d^*, z)$$
$1_{\mathcal{A}_d^*}(n)$ is called the $\mathcal{A}$-part of $w_n^*$.\\
$S(\mathcal{A}_d^*, z)$ is called the $\mathcal{A}$-part  of $S^*_d(z)$.\\
The $\mathcal{B}$-parts are defined analogously. The analogue of Lemma \ref{lem21} leads to an identity.
$$S_{\mathcal{A}^*\cap \mathbb{P}}(\theta)=\sum_j E_j(\theta)\:,$$
where the exponential sums $E_j(\theta)$ are extended over integers $n=p_1\ldots p_l$, defined by linear inequalities to be satisfied by the vector 
$$\left(\frac{\log p_1}{\log X}\:,\ \ldots, \frac{\log p_l}{\log X} \right)\:.$$
We also define the exponential sums
$$S_d^*(z, \theta)=\sum_{\substack{n\in \mathcal{B}^* \\ p\mid n\ \Rightarrow\ p>z}} w^*_{nd}= S(\mathcal{A}_d^*, z, \theta)-\frac{\kappa_{\mathcal{A}}\#\mathcal{A}^*}{\#\mathcal{B}^*} S(\mathcal{B}_d^*, z, \theta)\:.$$
For the evaluation of the sums $J(E)$ from (2.6) by the \textit{$a$-variable} circle method we partition the set $\{\frac{a}{X}\::\: 1\leq a\leq X\}$ into the two subsets of the major arcs and the minor arcs.
\end{definition}

\begin{definition}\label{rdef28}
We set $Q_0=(\log X)^3$. For $q\leq X$, $1\leq  c\leq q$, $(c, q)=1$ and $L\in [1, \infty)$ we set:
$$I_{c,q}(L):=\left[  \frac{c}{q}-q^{-1} X^{-1}L, \frac{c}{q}+q^{-1} X^{-1}L\right]\:.$$
We let $L_0=(\log X )^{C_1}$, $L_1=X^{1/5}$. The major arcs $\mathcal{M}$ are defined as 
$$\mathcal{M}:=\bigcup_{\substack{q\leq Q_0\\ (c,q)=1}} I_{c,q}(L_0)\:.$$
The minor arcs  $\mathfrak{m}$ are defined as
$$\mathfrak{m}:=[0,1]\setminus \mathcal{M}\:.$$
\end{definition}

For the evaluation of $S_{c_0}\left(\frac{c}{q}+\xi\right)$ we apply the approach of Balog and Friedlander \cite{balog}.\\
We now obtain a \textit{local version} of the result of Maynard \cite{maynard}. Instead of considering the sets $C_d$ with $C=\mathcal{A}$ and $\mathcal{B}$ appearing in the Buchstab recursion in Lemma \ref{lem21} we now consider the sets $C_d$ with 
$$C^*=C_{q,s}:=\{m\in C\::\: m\equiv s\bmod q\}\:,$$
where $C=\mathcal{A}^*$ or $\mathcal{B}^*$ as defined in Definition \ref{rdef27}.\\
We carry out the type I and type II estimates closely following Maynard \cite{maynard}, obtaining the contributions to $J(E)$ of the major arcs of the $a$-variable circle method. These type II estimates are based on the $b$-variable circle method:\\
Let $R$ be a subset of $\mathcal{B}^*$, $\mathcal{J}=\mathcal{A}^*\cap R$. Then we have
$$S_{\mathcal{J}}(\theta)=\frac{1}{X}\sum_{1\leq b\leq X}S_{\mathcal{A}}\left(\frac{b}{X}\right) S_R\left(-\left(\frac{b}{X}-\theta\right)\right)\:.$$
The minor arcs of the $a$-variable circle method finally are treated by estimates of Large Sieve type and by estimates of exponential sums over prime numbers.


\section{Structure of the paper}

In Section 4 we carry out the sieve decomposition of the local version of Maynard \cite{maynard} involving the exponential sums  instead of counting functions and the sets $\mathcal{A}^*$ and $\mathcal{B}^*$ contained in short intervals.\\
We shall reduce the proof of Theorem \ref{thm11} to the proof of three Propositions:\\ 
Proposition \ref{prop41} our type I estimate, Proposition \ref{prop42} our type II estimate and Proposition \ref{prop43} in which the $\mathcal{A}$-part is estimated trivially. All these propositions contain convolutions of the sums appearing in the Buchstab iterations with the Piatetski-Shapiro sums.\\
In Section 6 we reduce Propositions \ref{prop41}, \ref{prop42} and \ref{prop43} to the local version of Maynard's result, Propositions \ref{prop63} and \ref{prop64}, which do not involve the Piatetski-Shapiro sum.\\
Proposition \ref{prop63} is handled by a method from combinatorial sieve theory, replacing the M\"obius function by functions with smaller support and Fourier analysis to fix locations and residue-classes. \\
The proof of Proposition \ref{prop42} is carried out by the Classical Circle Method.\\
In Section 7 the ranges of summation are partitioned in small boxes.\\
These are now handled by the $b$-variable Circle Method, closely following Maynard \cite{maynard}. The dependency graph between the main statements is as follows:\\

 \begin{tikzpicture}[>=stealth,every node/.style={shape=rectangle,draw,rounded corners},]
    \node (c1) {Prop. \ref{prop81}};
    \node (c2) [below =of c1]{Prop. \ref{prop82}};
    \node (c3) [below =of c2]{Prop. \ref{prop83}};
    \node (c4) [right=of c2]{Prop. \ref{prop72}};
        \node (cr1) [right=of c4]{Prop. \ref{prop64}};
                 \node (c9) [right =of cr1]{Prop. \ref{prop42}};
     \node (c8) [below =of c9]{Prop. \ref{prop43}};
      \node (c10) [above =of c9]{Prop. \ref{prop41}};
            \node (cr2) [left =of c10]{Prop. \ref{prop63}};

      \node (c11) [right =of c9]{Theorem \ref{thm11}};

    \draw[->] (c1) to (c4);
        \draw[->] (c2) to  (c4);
            \draw[->] (c3) to  (c4);
                       \draw[->] (c4) to  (cr1);                    
                       \draw[->] (cr1) to  (c9);  
                             \draw[->] (cr2) to  (c10);  

                       \draw[->] (c9) to  (c11);
                         \draw[->] (c8) to  (c11);
                          \draw[->] (c10) to  (c11);
                            \draw[->] (c9) to  (c10);
\end{tikzpicture}

\section{Sieve decomposition and proof of Theorem \ref{thm11}}

Here we carry out the modification of Maynard's method of Sieve Decomposition as described in the outline and reduce the proof of Theorem \ref{thm11} to the proof of Propositions \ref{prop41}, \ref{prop42} and \ref{prop43}.\\
Proposition \ref{prop41} deals with convolutions of exponential sums of type I, Proposition \ref{prop42} with those of exponential sums of type II, whereas Proposition \ref{prop43} gives a result in the case in which neither type I nor type II information is available.

\begin{definition}
Let $\eta\in (0,1)$. Let $v(n, \eta)_{n\in\mathcal{S}}$ be a family of sequences of real numbers, indexed by the parameter $\eta$, $\mathcal{S}$ finite. The family of exponential sums 
$$E(\theta; \eta):=\sum_{n\in\mathcal{S}}v(n, \eta)e(n\theta)$$
is called \textit{negligible}, if 
$$\lim_{\eta\rightarrow 0}\limsup_{k\rightarrow \infty} \frac{|J(E)|\log X}{(\#\mathcal{A}^*)X}=0$$
the term $``negligible"$ will also be applied to an individual exponential sum $E(\theta)$ of the family $E(\theta, \eta)$.
\end{definition}

\begin{proposition}\label{prop41} (Sieve asymptotic terms)\\
Let $\epsilon>0$, $0<\eta_0\leq \theta_2-\theta_1$, $l=l(\eta_0)$ be fixed, where 
$$\theta_1=\frac{9}{25}+2\epsilon\ \  \text{and}\ \  \theta_2=\frac{17}{40}-2\epsilon.$$
Let $\mathcal{L}$ be a set of $O_{\eta_0}(1)$ affine linear functions, $L\::\: \mathbb{R}^l\rightarrow \mathbb{R}$. Let
$$E_0:=E_0(\theta, \eta_0)=\sum_{X^{\eta_0}\leq p_1\leq \cdots \leq p_l}^{\sim} S_{p_1\cdots p_l}^*(X^{\eta_0},\theta)\:,$$
where $\sum^\sim$ indicates that the summation is restricted by the condition 
$$L\left( \frac{\log p_1}{\log X},\ldots, \frac{\log p_l}{\log X}\right)\geq 0\:,$$
for all $L\in\mathcal{L}$.\\
Then $E_0$ is negligible.
\end{proposition}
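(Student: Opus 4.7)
The plan is to estimate $J(E_0)$ via the $a$-variable circle-method decomposition from Definition \ref{rdef28}, writing $J(E_0) = J(E_0, \mathcal{M}) + J(E_0, \mathfrak{m})$, and then to handle the two pieces by different tools: the major-arc contribution via Proposition \ref{prop63} (the local, Fourier-theoretic analogue of Maynard's digit result, which does not involve $S_{c_0}$), and the minor-arc contribution via Cauchy--Schwarz driven by the Balog--Friedlander minor-arc estimate for $S_{c_0}$, with Proposition \ref{prop42} supplying the needed $L^2$-gain on $E_0$ if Parseval alone proves too lossy.

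On a major arc $I_{c,q}(L_0) \subseteq \mathcal{M}$ I would invoke Proposition \ref{prop63} once per summand to obtain, for each fixed tuple $(p_1,\ldots,p_l)$ with $p_i \ge X^{\eta_0}$, an asymptotic of the form
$$S(\mathcal{A}^*_{p_1\cdots p_l}, X^{\eta_0}, \theta) = \frac{\kappa_\mathcal{A}\,\#\mathcal{A}^*}{\#\mathcal{B}^*}\, S(\mathcal{B}^*_{p_1\cdots p_l}, X^{\eta_0}, \theta) + O\!\bigl(\#\mathcal{A}^*(\log X)^{-C}\bigr),$$
uniformly in $\theta \in I_{c,q}(L_0)$, with $C$ arbitrarily large by the Convention following Lemma \ref{rlem24}. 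Subtracting and summing over the $O_{\eta_0}(1)$ tuples permitted by the affine constraints in $\mathcal{L}$ gives $|E_0(\theta)| \ll \#\mathcal{A}^*(\log X)^{-C'}$ pointwise on $\mathcal{M}$. Combined with the trivial bound $|S_{c_0}(\theta)|^2 \ll X^2$ and the estimate $\#\{a : a/X \in \mathcal{M}\} \ll Q_0 L_0 = (\log X)^{O(1)}$, this yields $|J(E_0,\mathcal{M})| \ll \#\mathcal{A}^* X (\log X)^{-C''}$, which is negligible for $C''$ large.

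On the minor arcs I would apply Cauchy--Schwarz:
$$|J(E_0, \mathfrak{m})| \le \frac{1}{X} \Bigl(\sum_{a/X \in \mathfrak{m}} |E_0(a/X)|^2\Bigr)^{1/2} \Bigl(\sum_{a/X \in \mathfrak{m}} |S_{c_0}(a/X)|^4\Bigr)^{1/2}.$$
Parseval applied to $E_0$ (whose Fourier coefficients are bounded and supported on at most $\#\mathcal{B}^*$ integers) controls the first factor by $(X\#\mathcal{B}^*)^{1/2}$. For the second factor I would split $\sum|S_{c_0}|^4 \le \sup_{\mathfrak{m}}|S_{c_0}|^2 \cdot \sum|S_{c_0}|^2$ and use the Balog--Friedlander minor-arc bound $\sup_{\theta\in\mathfrak{m}} |S_{c_0}(\theta)| \ll X^{1-\delta}$ (available under GRH because $\gamma_0 > \gamma^*$) together with Parseval's $\sum_a|S_{c_0}|^2 \ll X^{1+\gamma_0}(\log X)^{O(1)}$. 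These combine to a bound smaller than $\#\mathcal{A}^* X/\log X$ by a positive power of $X$. Proposition \ref{prop42} enters as a refinement here: if the crude Parseval estimate on $E_0$ turns out to be too lossy, one exploits the bilinear structure of each $S^*_{p_1\cdots p_l}$ (a product of $m = p_1\cdots p_l$ with a sifted $n$) and invokes Proposition \ref{prop42} on each resulting type~II piece to recover the missing factor of $(\log X)^{-1}$.

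The main obstacle is unsurprisingly Proposition \ref{prop63} itself. Lifting Maynard's digit-restriction comparison of $\mathcal{A}$ and $\mathcal{B}$ to the Fourier coefficients $S(\cdot, X^{\eta_0}, \theta)$ indexed by small Fourier neighborhoods of $c/q$ with $q \le Q_0$ simultaneously requires compactly supported M\"obius surrogates (as in Harman's sieve), Fourier analysis in base $10$ to fix prescribed digits, and Fourier analysis modulo $q$ to track the residue class, all uniformly enough that the error term beats every negative power of $\log X$. A subsidiary technical point is that $l = l(\eta_0)$ and the collection $\mathcal{L}$ contain only $O_{\eta_0}(1)$ data, so that the order of limits (first $k \to \infty$, then $\eta_0 \to 0$) in the definition of negligibility is automatic.
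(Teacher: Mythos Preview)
Your major-arc argument rests on two misreadings of Proposition~\ref{prop63}. First, the number of prime tuples $(p_1,\ldots,p_l)$ with $X^{\eta_0}\le p_1\le\cdots\le p_l$ satisfying the constraints from $\mathcal{L}$ is not $O_{\eta_0}(1)$: it is $l$ and $|\mathcal{L}|$ that are $O_{\eta_0}(1)$, while the number of admissible prime tuples grows with $X$. So even granting a per-tuple error of $O(\#\mathcal{A}^*(\log X)^{-C})$, summing over the tuples would destroy the bound. In any case Proposition~\ref{prop63} furnishes no per-tuple estimate; it bounds the full sum $\sum^{\sim}$ at once, and only at the rational point $\theta=c/q$ --- the extension across $I_{c,q}(L_0)$ uses the short-interval structure of $\mathcal{B}^*$ via $e(n\xi)-e(n_0\xi)=O(|n-n_0|\,|\xi|)$ (see (6.23)--(6.28)), which you do not mention.

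Second, and more substantively, Proposition~\ref{prop63} concerns the $\lambda$-weighted sums $S(\cdot,X^{\eta_0},c/q,\lambda)$ with $\lambda\in\{\lambda^-,\lambda^+\}$, not the genuine sieve sums $S(\cdot,X^{\eta_0},\theta)$ that make up $E_0$. The paper's route is to first establish the $\lambda$-weighted analogue (Lemma~\ref{rlem616}) and then invoke the Fundamental Lemma (Lemma~\ref{rlem61}) together with the positivity of the $J$-functional to sandwich $J(E_0)$ between the $\lambda^-$ and $\lambda^+$ versions (inequalities (6.32)--(6.33)); the residual discrepancy is $O(e^{-s})$ with $s=\eta_0^{-1/2}$, and this is precisely what drives the outer limit $\eta_0\to 0$ in the definition of negligibility. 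You omit this sandwich step entirely. Finally, Proposition~\ref{prop42} does enter the paper's argument, but not as an $L^2$-refinement on the minor arcs as you suggest: it is used for the $V_m$-terms in a further Buchstab iteration needed to pass from sifting level $X^{\eta_0}$ up to $X^{\theta_2-\theta_1}$.
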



\begin{proposition}\label{prop42} (Type II terms)\\
Let $l=l(\eta_0), \theta_1, \theta_2, \mathcal{L}$, be as in Proposition \ref{prop41} and let $\mathcal{I}=\{1,\ldots, l\}$ and \mbox{$j\in\{1,\ldots, l\}$,}
$$E_1(\theta, \eta_0):=\sum_{\substack{X^{\eta_0}\leq p_1\leq \cdots \leq p_l \\ X^{\theta_1}\leq \prod_{i\in I}p_i\leq X^{\theta_2}\\ p_1\cdots p_l\leq X/p_j}}^\sim  S_{p_1\cdots p_l}^*(p_j,\theta)\:,$$
$$E_2(\theta, \eta_0):=\sum_{\substack{X^{\eta_0}\leq p_1\leq \cdots \leq p_l \\ X^{1-\theta_2}\leq \prod_{i\in I}p_i\leq X^{1-\theta_1}\\ p_1\cdots p_l\leq X/p_j}}^\sim  S_{p_1\cdots p_l}^*(p_j,\theta)\:,$$
where $\sum^\sim$ indicates the same restriction as in Proposition \ref{prop41}.\\
Then $E_1$ and $E_2$ are negligible.
\end{proposition}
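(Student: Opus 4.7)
The plan is to split $J(E_i)=J(E_i,\mathcal{M})+J(E_i,\mathfrak{m})$ using the $a$-variable arc partition of Definition \ref{rdef28}, and to exploit the genuine type II (bilinear) structure carried by both $E_1$ and $E_2$: the sub-product $m=\prod_{i\in I}p_i$ is constrained to the medium range $[X^{\theta_1},X^{\theta_2}]$ (respectively $[X^{1-\theta_2},X^{1-\theta_1}]$), yielding a factorization
$$E_i(\theta)=\sum_{m}\sum_{n}\alpha_m\beta_n\, w^*_{mn}\, e(mn\theta),\qquad |\alpha_m|,|\beta_n|\le 1,$$
which is precisely the structural hypothesis required to feed $E_i$ into the non--Piatetski-Shapiro local Maynard statement Proposition \ref{prop64}.

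On the major arcs I would insert the Balog--Friedlander asymptotic for $S_{c_0}(c/q+\xi)$ valid on $I_{c,q}(L_0)$ with $q\le Q_0$: after squaring, this replaces $S_{c_0}^2$ by a rational main term depending on $c/q$ times a smooth density factor $T(\xi)^2$, plus an error of acceptable size $X/(\log X)^{A}$. Substituting into $J(E_i,\mathcal{M})$ and carrying out the $\xi$-integration collapses the major-arc contribution to a weighted evaluation of $E_i$ near the rationals $c/q$ with $q\leq Q_0$, which is exactly the object controlled by Proposition \ref{prop64}. This is precisely the reduction carried out in Section 6.

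On the minor arcs I would apply Cauchy--Schwarz in the form
$$|J(E_i,\mathfrak{m})|^2\ \le\ \Bigl(\sup_{\theta\in\mathfrak{m}}|S_{c_0}(\theta)|^2\Bigr)\cdot\Bigl(\tfrac{1}{X}\sum_{a}|E_i(a/X)|^2\Bigr)\cdot\Bigl(\tfrac{1}{X}\sum_{a}|S_{c_0}(a/X)|^2\Bigr).$$
The $L^\infty$ factor is bounded by $X^{1-\delta}$ via the Balog--Friedlander minor-arc exponential sum estimate for Piatetski-Shapiro primes, which is exactly what forces the hypothesis $\gamma_0>\gamma^*$, while the second moment of $S_{c_0}$ is estimated by Parseval and a standard prime count. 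For the second moment of $E_i$, I would expand via Parseval using the bilinear representation and then pass to the $b$-variable circle identity $S_{\mathcal{J}}(\theta)=X^{-1}\sum_{b}S_{\mathcal{A}}(b/X)S_R(\theta-b/X)$ recalled in the outline, so that Maynard's Fourier bounds on $S_{\mathcal{A}}$ (Propositions \ref{prop72} and \ref{prop81}--\ref{prop83}) supply near-square-root cancellation.

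The main obstacle I expect is the minor-arc step. Because the Balog--Friedlander $L^\infty$ saving from $S_{c_0}$ on $\mathfrak{m}$ is only $X^{-\delta}$ for a modest $\delta$, one must extract nearly square-root cancellation from the digit-restricted bilinear form $E_i$, and this forces the type II window $[X^{\theta_1},X^{\theta_2}]$ to be wide enough for both halves to lie in the range where Maynard's machinery applies, yet narrow enough that the residual loss is absorbed by the Piatetski-Shapiro saving. The explicit choices $\theta_1=9/25+2\epsilon$, $\theta_2=17/40-2\epsilon$ and the hypothesis $\gamma_0>\gamma^*$ encode exactly this balance.
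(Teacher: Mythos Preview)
Your major-arc reduction is correct and matches the paper: on $I_{c,q}(L_0)$ with $q\le Q_0$ one freezes $e(n(c/q+\xi))\approx e(n_0\xi)e(nc/q)$ (using that $n\in\mathcal{B}^*$ is in a short interval), inserts the Balog--Friedlander/GRH asymptotic for $S_{c_0}$, and is left with exactly the local type~II statement of Proposition~\ref{prop64}.

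Your minor-arc treatment, however, mislocates where the type~II structure is used and overcomplicates the estimate. The paper does \emph{not} invoke Maynard's bilinear machinery (the $b$-variable circle method, Propositions~\ref{prop72}, \ref{prop81}--\ref{prop83}) to bound $\sum_a |E_i(a/X)|^2$. It uses only trivial Parseval: the $\mathcal{A}^*$-part of $E_i$ has at most $\#\mathcal{A}^*(\log X)^{O(1)}$ terms with bounded coefficients, so $\sum_a|E_i(a/X)|^2\ll X\,\#\mathcal{A}^*(\log X)^{O(1)}$. Combined with $\sup_{\theta\in\mathfrak n}|S_{c_0}(\theta)|\le X^{1-\delta_0}$ and Parseval on $S_{c_0}$, this gives a contribution $\ll \#\mathcal{A}^{*\,1/2}X^{3/2-\delta_0}$, which is $o(\#\mathcal{A}^*\,X/\log X)$ precisely when $X^{1-2\delta_0}=o(\#\mathcal{A}^*)\asymp 9^k$, i.e.\ when $\delta_0>\tfrac{1}{2}\log(10/9)/\log 10$; together with the Balog--Friedlander constraint $9(1-\gamma_0)+12\delta_0<1$ this is exactly $\gamma_0>\gamma^*$. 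So $\gamma^*$ arises from balancing the Piatetski--Shapiro saving against the \emph{density} of $\mathcal{A}^*$, not against any bilinear cancellation in $E_i$.

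Consequently your last paragraph has the architecture backwards. The window $[\theta_1,\theta_2]=[9/25+2\epsilon,\,17/40-2\epsilon]$ plays no role whatsoever on the $a$-variable minor arcs; it is needed only for Proposition~\ref{prop64} (hence on the major arcs), and it is \emph{there}---one layer down, inside the $b$-variable circle method---that Maynard's Fourier bounds and Propositions~\ref{prop81}--\ref{prop83} enter.

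A smaller point: your claim $\sup_{\theta\in\mathfrak m}|S_{c_0}(\theta)|\le X^{1-\delta}$ is not uniformly true on all of $\mathfrak m$. The paper handles the portion of $\mathfrak m$ with small denominators $Q_0<q\le X^{\delta_0}$ separately (Lemmas~\ref{lem619}, \ref{lem621}), using $S_{c_0}(c/q+\xi)\ll X/\phi(q)$ from GRH together with the trivial bound $|E_i|\ll\#\mathcal A^*$; only the region $q\ge X^{1/5}$ falls into $\mathfrak n$ where the pointwise Balog--Friedlander bound applies (Lemmas~\ref{lem620}, \ref{lem622}).
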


\begin{definition}\label{def42}
The Buchstab function $\omega$ is defined by the delay-differential equation
$$\omega(u)=\frac{1}{u}\:,\ 1\leq u\leq 2\:,$$
$$\omega'(u)=\omega(u-1)-\omega(u)\:,\ \ u>2\:.$$
For $\vec{p}=(p_1,\ldots,p_l)$, $p_i$ primes for $1\leq i \leq l$, let
$$Log(\vec{p})=\left(\frac{\log p_1}{\log X},\ldots, \frac{\log p_l}{\log X}\right)\:.$$
Let $\mathcal{C}$ be a set of $O(1)$ affine linear functions. Let the polytope $\mathcal{R}$ be defined by
$$\mathcal{R}=\{(u_1, \ldots, u_l)\in[0,1]^l\::\: L(u_1,\ldots, u_l)\geq 0\ \ \text{for all}\ L\in\mathcal{L}\}\:.$$
Let 
$$\Pi(\vec{p})=p_1\cdots p_l\:,$$
$$\mathfrak{S}(N_0)=\prod_{p\nmid N_0}\left(1+\frac{1}{(p-1)^3}\right)\prod_{p\mid N_0}\left(1-\frac{1}{(p-1)^2}\right)\:.$$\end{definition}

\begin{proposition}\label{prop43}
Let $l\in\mathbb{N}$, $\delta>0$,
$$z\::\: [0,1]^l\rightarrow [\delta, 1-\delta],\ \vec{u}=(u_1,\ldots, u_l)\rightarrow z(\vec{u})=z(u_1, \ldots, u_l)$$
be continuous.
Let 
$$E(\theta):=\sum_{\vec{p}\::\: Log(\vec{p})\in \mathcal{R}} S(\mathcal{B}^*_{\prod(\vec{p})}, X^{z(Log(\vec{p}))}, \theta)\:.$$
Then 
$$J(E(\theta))=\frac{X(\#\mathcal{B}^*)}{4\log X}\: \mathfrak{S}_0(N_0) \int\cdots\int_\mathcal{R} \frac{\omega(1-u_1-\cdots - u_l)}{u_1\cdots u_l z(u_1, \ldots, u_l)}\: du_1\ldots du_l\: (1+o(1))\:.$$
\end{proposition}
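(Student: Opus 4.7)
The plan is first to invoke Lemma~\ref{rlem26} and replace $J(E(\theta))$ by the combinatorial quantity
\begin{align*}
M(E)=\sum_{\substack{\vec{p}\\ Log(\vec{p})\in\mathcal{R}}}\;\sum_{\substack{(c,p_2,p_3)\\ c\,\Pi(\vec{p})\in\mathcal{B}^*\\ p\mid c\Rightarrow p>X^{z(Log(\vec{p}))}\\ p_2,p_3\in\mathbb{P}_{c_0}\cap Int(N_0)\\ c\,\Pi(\vec{p})+p_2+p_3=N_0}}p_2^{1-\gamma}p_3^{1-\gamma}(\log p_2)(\log p_3),
\end{align*}
that is, a weighted Goldbach-type count where one summand is constrained to the sifted image $\Pi(\vec{p})\cdot\mathcal{U}(\mathcal{B}^*_{\Pi(\vec{p})},X^{z(Log(\vec{p}))})$ and the other two are Piatetski-Shapiro primes in $Int(N_0)$.

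Next, I would partition the cube $\{\vec{u}\in[0,1]^l\}$ into a fine grid of boxes $B_{\vec{u}}$ of side $\Delta=(\log X)^{-C}$; on each box both $\Pi(\vec{p})$ and $z(Log(\vec{p}))$ are essentially constant, so they may be replaced by $X^{u_1+\cdots+u_l}$ and $z(\vec{u})$ up to acceptable error, using the continuity of $z$ and the fact that only a negligible fraction of boxes intersects $\partial\mathcal{R}$. For fixed $\vec{p}$ in a box, the inner sum becomes a binary Piatetski-Shapiro Goldbach problem $p_2+p_3=N_0-c\,\Pi(\vec{p})$, with $c$ varying over the sifted set $\mathcal{U}(\mathcal{B}^*_{\Pi(\vec{p})},X^{z(\vec{u})})$. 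Applying the Hardy-Littlewood circle method with the major/minor arc decomposition from Definition~\ref{rdef28} -- Balog-Friedlander's major-arc asymptotic and pointwise minor-arc estimate for $S_{c_0}(c/q+\xi)$, combined with Parseval on the sifted $\mathcal{B}$-part on the minor arcs -- yields, for each fixed $c$, the asymptotic $\mathfrak{S}_0(N_0-c\,\Pi(\vec{p}))\cdot|Int(N_0)|$ for the binary count, and since $\mathfrak{S}_0$ is locally almost constant over the short window $\mathcal{B}^*$ this may be replaced by $\mathfrak{S}_0(N_0)\cdot X/4$.

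Summing over sifted $c$ introduces the sieve-density factor via the fundamental lemma applied to the interval $\mathcal{B}^*_{\Pi(\vec{p})}$ with sifting level $X^{z(\vec{u})}$, producing the Buchstab factor $\omega(1-u_1-\cdots-u_l)$ together with $z(\vec{u})^{-1}$ in the denominator. Finally, summing over $\vec{p}$ inside each box using the prime number theorem in the form $\sum_{p_i\in(X^{u_i},X^{u_i+\Delta}]}1/p_i\sim\Delta/u_i$ gives $\sum_{\vec{p}\in B_{\vec{u}}}\Pi(\vec{p})^{-1}\sim\prod_i(\Delta/u_i)$; combining all pieces and passing from the Riemann sum over boxes to the corresponding integral over $\mathcal{R}$ produces the claimed expression
\begin{align*}
\frac{X\,\#\mathcal{B}^*}{4\log X}\,\mathfrak{S}_0(N_0)\int\!\cdots\!\int_{\mathcal{R}}\frac{\omega(1-u_1-\cdots-u_l)}{u_1\cdots u_l\,z(u_1,\ldots,u_l)}\,du_1\cdots du_l.
\end{align*}

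The principal obstacle is to justify the binary Hardy-Littlewood asymptotic uniformly as $c\,\Pi(\vec{p})$ ranges through the short interval $\mathcal{B}^*$ and uniformly across all $\vec{u}$-boxes. This requires combining the Balog-Friedlander minor-arc bound for $S_{c_0}$ (which saves a power of $\log X$ absorbed into the convention $\min_i C_i$ arbitrarily large) with a Parseval-type mean-square bound on the sifted $\mathcal{B}$-part exponential sum in short intervals, while simultaneously controlling the fundamental-lemma error uniformly in the sifting parameter $z(\vec{u})\in[\delta,1-\delta]$ and over the congruence conditions imposed by the major arcs.
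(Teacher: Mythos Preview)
Your route differs from the paper's. The paper never passes to $M(E)$; it works directly on $J(E)$ via the $a$-variable circle method. On the major arcs $I_{c,q}(L_0)$ it approximates $S_{c_0}(c/q+\xi)$ by $\tfrac{\mu(q)}{\phi(q)}\sum_{m\in Int}e(m\xi)$ (Lemma~\ref{lem62} plus GRH) and $E(c/q+\xi)$ by $\tfrac{\mu(q)}{\phi(q)}\sum_{\vec{p}}\sum_{m}e(m\xi)$, the latter via the equidistribution statement (6.34) that the sifted set $\mathcal{U}(\mathcal{B}^*_{\Pi(\vec{p})},X^{z})$ is uniformly distributed among reduced residues modulo $q$. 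The ternary singular series then falls out of $\sum_{q\leq Q_0}\tfrac{\mu(q)^3}{\phi(q)^3}\sum_{(c,q)=1}e(-N_0c/q)$, and the minor arcs are handled wholesale by Lemmas~\ref{lem617}--\ref{lem622}. Your idea of passing to $M(E)$ and treating each $m=c\,\Pi(\vec{p})$ as a binary Piatetski--Shapiro problem $p_2+p_3=N_0-m$ is a legitimate alternative, but one step is wrong as written.

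The gap is the assertion that ``$\mathfrak{S}_0$ is locally almost constant over the short window $\mathcal{B}^*$''. The binary count for $p_2+p_3=N_0-m$ produces the \emph{binary} singular series
\[
\mathfrak{S}_2(N_0-m)=2C_2\prod_{\substack{p\mid N_0-m\\ p>2}}\frac{p-1}{p-2},
\]
which depends on the small prime divisors of $N_0-m$ and fluctuates erratically as $m$ moves through any interval, however short. It cannot be replaced by a constant pointwise. What actually recovers $\mathfrak{S}(N_0)$ is the \emph{average} of $\mathfrak{S}_2(N_0-m)$ over $m$ in the sifted set: since every $m$ in $\Pi(\vec{p})\cdot\mathcal{U}(\mathcal{B}^*_{\Pi(\vec{p})},X^{z})$ is coprime to all primes $\leq X^{\delta}$, these $m$ are equidistributed among reduced residues modulo each $q\leq Q_0$ (this is exactly (6.34) in the paper), and averaging $c_q(N_0-m)$ over such $m$ produces the extra factor $\mu(q)/\phi(q)$ that upgrades the binary local factors to ternary ones. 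So your approach can be repaired, but only by invoking the same equidistribution input the paper uses---deployed at the averaging stage rather than inside the major-arc approximation of $E$. The paper's direct route is cleaner precisely because the ternary $\mathfrak{S}(N_0)$ appears in one stroke and the binary series never has to be written down.
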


We now closely follow Maynard \cite{maynard} to decompose the exponential sum
$$S_{\mathcal{A}\cap \mathbb{P}}(\theta)=E^*(\theta):=\sum_{p\in \mathcal{A}} e(p\theta)\:.$$
We recall the following notations from \cite{maynard}. We let
$$z_1\leq z_2 \leq z_3\leq z_4\leq z_5\leq z_6$$
be given by
\begin{align*}
&z_1:=X^{\theta_2-\theta_1}\:,\ \ z_2:=X^{\theta_1}\:,\ \ z_3:=X^{\theta_2}\\
&z_4:=X^{1/2}\:,\ \ z_5:=X^{1-\theta_2}\:,\ \ z_6:=X^{1-\theta_1}\:.
\end{align*}

We write $N_i$ for negligible sums, $P_i$ for sums with $J(P_i)\geq 0$ and, given a positive constant $I:=E_i(I,\theta)$ for an exponential sum with
$$J(E_i(I,\theta))=\Gamma\cdot I(1+o(1))\:.$$
We have by Lemma \ref{rlem26}

$$S_{\mathcal{A}\cap\mathbb{P}}(\theta)=\sum_{p>X^{1/2}}e(p\theta)+N_1(\theta)=S_1(z_4, \theta)+\frac{\kappa_{\mathcal{A}}\#\mathcal{A}}{\log X}S_{\mathcal{B}}(\theta)+N_1(\theta)\:.$$

By Buchstab's identity we have:
$$S_1(z_4, \theta)=S_1(z_1, \theta)-\sum_{z_1<p\leq z_2} S_p(p, \theta)\:.$$

$S_1(z, \theta)$ is negligible by Proposition \ref{prop41}. We split the sum over $p$ into ranges $(z_i, z_{i+1})$ and see that all the terms with $p\in (z_2, z_3)$ are also negligible by Proposition \ref{prop42}. This gives
$$S_1^*(z_4, \theta)=-\sum_{z_1<p\leq z_2} S_p^*(p,\theta)-\sum_{z_3<p\leq z_4} S_p^*(p,\theta)+N_2(\theta)\:.$$
We wish to replace $S_p^*(p,\theta)$ by 
$$S_p^*\left(\min\left(p, \left(\frac{X}{p}\right)^{1/2}\right), \theta\right)\:.$$
We note, that these are the same, when $p\leq X^{1/3}$, but if $p>X^{1/3}$, then there are additional terms in 
$$S_p^*\left(\left(\frac{X}{p}\right)^{1/2}, \theta\right).$$
The existence of two primes $q_1, q_2$ with 
$$\left(\frac{x}{p}\right)^{1/2}<q_1<q_2\leq p$$
leads to the contradiction $pq_1q_2>X$.\\
Thus we have with $\delta:=1/(\log X)^{1/2}$
\begin{align*}
-P_1(\theta)&+\sum_{p<X^{1/2}}\left(S\left(\mathcal{A}_p, \min\left(p, \left(\frac{X}{p}\right)^{1/2}\right), \theta\right)-S^*(\mathcal{A}_p, p, \theta)\right)\\
&=\sum_{p< X^{\frac{1}{2}-\delta}}\sum_{\substack{\left(\frac{X}{p}\right)^{1/2}<q\leq p \\ qp\in \mathcal{A}}} e(pq\theta)+\sum_{X^{\frac{1}{2}-\delta}\leq p\leq X^{\frac{1}{2}}} S^*(\mathcal{A}_p, z_1, \theta)-P_2(\theta)\\
&=N_3(\theta)\:,  \tag{4.1}
\end{align*}
by Lemma \ref{rlem26}.\\
Similarly, we get corresponding bounds for 
$$S^*\left(\mathcal{B}_p, \min\left(p, \left(\frac{X}{p}\right)^{1/2}\right),\theta\right)$$
and so we can replace $S^*_p(p, \theta)$ by 
$$S_p^*\left(\min\left(p, \left(\frac{X}{p}\right)^{1/2}, \theta\right)\right)$$
at the cost of  negligible sum.\\
Using this, and applying Buchstab's identity again, we have:
\begin{align*}
S_1^*(z_4, \theta)&=-\sum_{z_1<p\leq z_2} S_p^*\left(\min\left(p, \left(\frac{X}{p}\right)^{1/2}, \theta\right)\right)\\
&-\sum_{z_3<p\leq z_4}S_p^*\left(\min\left(p, \left(\frac{X}{p}\right)^{1/2}, \theta\right)\right) +N_4(\theta)\\
&=-\sum_{z_1<p\leq z_2} S_p^*(z_1, \theta)-\sum_{z_3<p\leq z_4} S_p^*(z_1, \theta) + \sum_{\substack{z_1<1\leq p\leq z_2 \\ q\leq \left(\frac{X}{q}\right)^{1/2}}}S_{pq}^*(q, \theta)\\
&+\sum_{\substack{z_3<1\leq p\leq z_4 \\ z_1<q\leq \left(\frac{X}{q}\right)^{1/2}}}S_{pq}^*(q, \theta)+N_5(\theta)\:.
\end{align*}

The first two terms above are asymptotically negligible by Proposition \ref{prop41} and so this simplifies to
\[
S_1^*(z_4, \theta)=\sum_{\substack{z_1<q\leq p\leq z_2 \\ q\leq \left(\frac{X}{p}\right)^{1/2}}} S_{pq}^*(q, \theta)+ \sum_{\substack{z_3<q\leq p\leq z_4 \\ z_1<q\leq \left(\frac{X}{p}\right)^{1/2}}} S_{pq}^*(q, \theta) + N_6(\theta)\:. \tag{4.2}
\]
We perform further decompositions to the remaining terms in (6.5). We first concentrate on the first term on the right hand side. Splitting the ranges of $pq$ into intervals, and recalling those with a $pq$ in the interval $[z_2, z_3]$ or $[z_5, z_6]$ make a negligible contribution by Proposition \ref{prop42}, we obtain
\begin{align*}
\sum_{\substack{z_1<q\leq p\leq z_2 \\ q\leq \left(\frac{X}{p}\right)^{1/2}}} S_{pq}^*(q, \theta) =& \sum_{\substack{z_1<q\leq p\leq z_2 \\ q\leq \left(\frac{X}{p}\right)^{1/2} \\ z_0<pq}} S_{pq}^*(q, \theta)
+\sum_{\substack{z_1<q\leq p\leq z_2 \\ q\leq \left(\frac{X}{p}\right)^{1/2} \\ z_3\leq pq<z_5}} S_{pq}^*(q, \theta)\\ 
&+\sum_{\substack{z_1<q\leq p\leq z_2 \\ z_1\leq pq <z_2}} S_{pq}^*(q, \theta) + N_6(\theta)\:.   \tag{4.3}
\end{align*}

Here we have dropped the condition $q\leq (X/p)^{1/2}$ in the final sum, since this is implied by $q\leq p$ and $pq\leq z_2$. On recalling the definition of $w_n$, we can write:
$$\sum_{\substack{z_1<q\leq p\leq z_2 \\ q\leq \left(\frac{X}{p}\right)^{1/2} \\ z_6<pq}} S_{pq}^*(q, \theta)=P_3(\theta)-\frac{\kappa_{\mathcal{A}}\#\mathcal{A}^*}{\log X}\: \sum_{\substack{z_1<q\leq p\leq z_2 \\ q\leq \left(\frac{X}{p}\right)^{1/2} \\ z_6<pq}} S_{pq}^*(q, \theta)  S(\mathcal{B}_{pq}, q, \theta)\:.$$
By Proposition \ref{prop43} we obtain:
\[
\sum_{\substack{z_1<q\leq p\leq z_2 \\ q\leq \left(\frac{X}{p}\right)^{1/2} \\ z_0<pq}} S_{pq}^*(q, \theta)= \frac{\kappa_{\mathcal{A}}\#\mathcal{A}^*}{\log X} E_1(I_1, \theta)\:,   \tag{4.4}
\]
where
\[
I_1=\iint_{\substack{\theta_2-\theta_1<v\leq \theta_1 \\ v<(1-u)/2 \\ 1-\theta_1< u+v}} \omega\left(\frac{1-u-v}{v}\right)\: \frac{dudv}{uv^2} \:. \tag{4.5}
\]

$I_1$ is the first one of  a series of nine integrals $I_1, \ldots, I_9$, which occur in Maynard \cite{maynard}.\\
We perform further decompositions of the second term of (4.3), first splitting according to the size of $q^2p$ compared with $z_6$.
\[
\sum_{\substack{z_1<q\leq p\leq z_2 \\ q\leq \left(\frac{X}{p}\right)^{1/2} \\ z_3\leq pq< z^5}} S_{pq}^*(q, \theta)= \sum_{\substack{z_1<q\leq p\leq z_2 \\ z_3\leq pq< z_5 \\ q^2p<z_6}} S_{pq}^*(q, \theta) +  \sum_{\substack{z_1<q\leq p< z_2 \\ z_3\leq pq< z_5 \\ z_6\leq q^2p\leq X}} S_{pq}^*(q, \theta) \:. \tag{4.6}
\]

For the second term of (4.6), when $q^2p$ is large, we first separate the contribution from products of three primes: By an essentially identical argument to when we replaced $S_p^*(p, \theta)$ by 
$$S_p^*\left( \min\left(p, \left(\frac{X}{p}\right)^{1/2}\right), \theta\right)$$
we may replace $S_{pq}^*(q, \theta)$ by 
$$S_{pq}^*\left( \min\left(q, \left(\frac{X}{pq}\right)^{1/2}\right), \theta\right)$$
at the cost of a negligible sum $N_7(\theta)$ (since $pq<z_6$).\\
By Buchstab's identity  we have (with $r$ restricted to being prime):
\begin{align*}
&\sum_{\substack{z_1<q\leq p\leq z_2 \\ z_3\leq pq<z_5 \\ z_6\leq q^2p\leq  X}} S_{pq}^*\left( \min\left(p, \left(\frac{X}{p}\right)^{1/2}\right), \theta\right)=\\ 
&\sum_{\substack{z_1\leq q< p\leq z_2 \\ z_3\leq pq<z_5 \\ z_6\leq q^2p\leq  X}} S_{pq}^*\left(  \left(\frac{X}{pq}\right)^{1/2}, \theta\right)
+
\sum_{\substack{z_1<q\leq p\leq z_2 \\ z_3\leq pq<z_5 \\ z_6\leq q^2p\leq  X \\ q<r\leq (X/pq)^{1/2}}} S_{pqr}^*\left(r, \theta\right)\:.
\end{align*} 

The first term above is counting products of exactly three primes, and for these terms we drop the contribution of the $\mathcal{A}$-part for a non-negative sum. We obtain
\[
\sum_{\substack{z_1<q\leq p\leq z_2 \\ z_3< pq\leq z_5 \\ z_6\leq q^2p\leq  X}} S_{pq}^*\left(  \left(\frac{X}{pq}\right)^{1/2}, \theta\right)  = P_4(\theta)-\frac{\kappa_{\mathcal{A}}\#\mathcal{A}^*}{\log X} E_2(I_2, \theta)+N_8(\theta)\:,  \tag{4.7}
\]
where
$$I_2=\iint_{\substack{\theta_2-\theta_1<v<u<\theta_1 \\ \theta_2<u+v<1-\theta_2 \\ 1-\theta_1<2v+u<1  }} \frac{dudv}{uv(1-u-v)}\:.$$
For the terms not coming from products of three primes, we split our summation according to the size of $pqr$, noting that this is negligible, if 
$qr\in [z_2, z_3)$ by Proposition \ref{prop42}. For the terms with 
$qr\not\in [z_2, z_3)$ we just take the trivial lower bound. Thus, by Proposition \ref{prop43} we have

\begin{align*}
&\sum_{\substack{z_1<q\leq p\leq z_2 \\ z_3\leq  pq\leq z_5 \\ z_6\leq q^2p\leq  X \\ q<r\leq  \left(\frac{X}{pq}\right)^{1/2} }}  S_{pqr}^* (r,\theta)=\\
&=\sum_{\substack{z_1\leq q\leq p\leq z_2 \\ z_3\leq  pq< z_5 \\ z_6\leq q^2p\leq  X \\ q<r\leq  \left(\frac{X}{pq}\right)^{1/2} \\ qr<z_2 }}  S_{pqr}^* (r,\theta)
+
\sum_{\substack{z_1< q\leq p\leq z_2 \\ z_3\leq  pq< z_5 \\ z_6\leq q^2p\leq  X \\ q<r\leq  \left(\frac{X}{pq}\right)^{1/2} \\ qr>z_3 }}  S_{pqr}^* (r,\theta) + N_9(\theta)\\
&=-(1+o(1))\: \frac{\kappa_{\mathcal{A}}\#\mathcal{A}^*}{\log X} (E_3(I_3,\theta)+E_4(I_4,\theta))+P_5(\theta),
\end{align*}

where
\[
I_3=\iiint_{(u,v,w)\in\mathcal{R}_1} \omega\left( \frac{1-u-v-w}{w} \right)\:\frac{dudvdw}{uvw^2}\:,  \tag{4.8}
\]
\[
I_4=\iiint_{(u,v,w)\in\mathcal{R}_2} \omega\left( \frac{1-u-v-w}{w} \right)\:\frac{dudvdw}{uvw^2}\:,  \tag{4.9}
\]
where $\mathcal{R}_1$ and $\mathcal{R}_2$ are given by
$$\mathcal{R}_1:=\{(u,v,w)\::\: \theta_2-\theta_1<v<u<\theta_1, 
\theta_2<u+v<1-\theta_2,$$ 
$$1-\theta_1<u+2v<1, v<w<(1-u-v)/2, v+w<\theta_1\}\:,$$
$$\mathcal{R}_2:=\{(u,v,w)\::\: \theta_2-\theta_1<v<u<\theta_1, 
\theta_2<u+v<1-\theta_2,$$ 
$$1-\theta_1<u+2v<1, v<w<(1-u-v)/2, v+w<\theta_2\}\:.$$

When $q^2p<z_6$ we can apply two further Buchstab iterations, since then we can evaluate terms $S_{pqr}^*(z_1, \theta)$ with $r\leq q\leq p$ using Proposition \ref{prop41} (since $pqr<z_6$). This gives
\begin{align*}
&\sum_{\substack{z_1<q\leq p\leq z_2 \\ q^2p<z_6 \\ z_3\leq pq< z_5}} S_{pq}^*(q, \theta)= \sum_{\substack{z_1<q\leq p\leq z_2 \\ q^2p<z_6 \\ z_3\leq pq< z_5}} S_{pq}^*\left( \min\left(q,\left(\frac{X}{pq}\right)^{1/2}\right), \theta \right)+N_{10}(\theta)\\
&= \sum_{\substack{z_1<q\leq p\leq z_2 \\ q^2p<z_6 \\ z_3\leq pq< z_5}} S_{pq}^*(z_1, \theta) - \sum_{\substack{z_1<r\leq q\leq p\leq z_2 \\ q^2p<z_6 \\ z_3\leq pq< z_5 \\ r\leq (X/pq)^{1/2}}} S_{pqr}^*(r, \theta) +N_{11}(\theta)\\
&=N_{12}(\theta)- \sum_{\substack{z_1<r\leq q\leq p\leq z_2 \\ q^2p<z_6 \\ z_3\leq pq\leq  z_5 \\ r\leq (X/pq)^{1/2}}} S_{pqr}^*\left( \min\left(r,\left(\frac{X}{pqr}\right)^{1/2}\right), \theta \right)\\
&=N_{12}(\theta)-\sum_{\substack{z_1<r\leq q\leq p\leq z_2 \\ q^2p<z_6 \\ z_3\leq pq<  z_5 \\ r\leq (X/pq)^{1/2}}} S_{pqr}^*(z_1, \theta)+
\sum_{\substack{z_1<s\leq r\leq q\leq p\leq z_2 \\ q^2p<z_6 \\ z_3\leq pq<  z_5 \\ r^2pq, s^2rpq\leq X}} S_{pqrs}^*(s, \theta)\\
&=N_{13}(\theta)+\sum_{\substack{z_1<s\leq r\leq q\leq p\leq z_2 \\ q^2p\leq z_6 \\ z_3\leq pq<  z_5 \\ r^2pq, s^2rpq\leq X}} S_{pqrs}^*(s, \theta)\:,
\end{align*}
where $r,s$ are restricted to primes in the sums above.\\
Finally, we see that any part of the final sum with a product of two of $p,q,r,s$ in $[z_2, z_3]$ can be discarded by Proposition \ref{prop42}. Trivially lower bounding the remaining terms as before yields
$$\sum_{\substack{z_1<s\leq r\leq q\leq p\leq z_2 \\ q^2p<z_6 \\ z_3\leq pq<  z_5 \\ r^2pq, s^2rpq\leq X}} S_{pqrs}^*(s, \theta)=P_6(\theta)-\frac{\kappa_{\mathcal{A}}\#\mathcal{A}^*}{\log X}\: E_5(I_5, \theta)\:,$$
with 
\[
I_5=\iiiint_{(u,v,w,t)\in \mathcal{R}_3} \omega\left(\frac{1-u-v-w-t}{t}\right)\frac{dudvdwdt}{uvwt^2}\:, \tag{4.10}
\]
where $\mathcal{R}_3$ is given by
$$\mathcal{R}_3:=\{(u,v,w,t)\::\: \theta_2-\theta_1<t<w<v<u<\theta_1, 
u+2v<1-\theta_1,$$
$$ u+v+2w<1, u+v+w+2t<1, \theta_2<u+v<1-\theta_2,$$
$$\{ u+v, u+w, u+t, v+w, v+t, w+t\}\cap[\theta_1, \theta_2]=\emptyset \}. $$

We perform decompositions to the third term of (4.3) in a similar way to how we dealt with the second term. We have 
$$q^2p<(qp)^{3/2}<z_2^{3/2}<z_6$$
so, as above, we can apply two Buchstab iterations and use Proposition \ref{prop41} to deal with the terms $S_{pqr}(z_1, \theta)$, since we have $pqr\leq pq^2<z_6$.\\
Furthermore, we notice that terms with any of $pqr, pqs, prs,$ or $qrs$ in $[z_2, z_3]\cup [z_5, z_6]$ are negligible by Proposition \ref{prop42}. This gives 
\begin{align*}
\sum_{\substack{z_1<q\leq p\leq z_2 \\ z_1\leq pq\leq z_2}} S_{pq}^*(q, \theta)&=  \sum_{\substack{z_1<q\leq p\leq z_2 \\ z_1\leq pq< z_2}} S_{pq}^*(z_1, \theta)-\sum_{\substack{z_1<r\leq q\leq p\leq z_2 \\ z_1\leq pq< z_2}} S_{pq}^*(r, \theta)\\
&=N_{14}(\theta)-\sum_{\substack{z_1<r\leq q\leq p\leq z_2 \\ z_1\leq pq\leq z_2}} S_{pqr}^*(z_1, \theta)+\sum_{\substack{z_1<s<r<q< p< z_2 \\ z_1< pq< z_2}} S_{pqrs}^*(s, \theta)\\
&=\sum_{\substack{z_1<s<r<q< p< z_2 \\ z_1< pq< z_2 \\ prq, pqs, prs, qrs\not\in[z_2, z_3]\\ pqrs\not\in[z_2, z_3]\cup[z_5, z_6]}} S_{pqrs}^*(s, \theta)+N_{15}(\theta)\\
&=P_7(\theta)-\frac{\kappa_{\mathcal{A}}\#\mathcal{A}^*}{\log X} E_6(I_6, \theta)\:,
\end{align*}
where
\[
I_6=\iiiint_{(u,v,w,t)\in\mathcal{R}_4} \omega\left(\frac{1-u-v-w-t}{t}\right)\frac{dudvdwdt}{uvwt^2}\:,\tag{4.11}
\]
where
$$\mathcal{R}_4:=\{(u,v,w,t)\::\: \theta_2-\theta_1<t<w<v<u<\theta_1, u+v<\theta_1, $$
$$u+v+w+t\not\in[\theta_1, \theta_2]\cup [1-\theta_2, 1-\theta_1],$$
$$\{u+v+w, u+v+t, u+w+t, v+w+t\}\cap[\theta_1, \theta_2]=\emptyset\}.$$
Together (4.4), (4.6), (4.7), (4.8), (4.9), (4.10) and (4.11) give our lower bound for all the terms occurring in (4.3) and so give a lower bound for the first term from (4.3) which covers all terms with $p\leq z_2$.\\
We are left to consider the second term from (4.3), which is the remaining term with $p\in (z_3, z_4]$. We treat these in a similar manner to those with $p\leq z_2$.\\
We first split the sum according to the size of $qp$. Terms with $qp\in[z_5, z_6)$ are negligible by Proposition \ref{prop42}, so we are left to consider $qp\in(z_3, z_5)$ or $qp>z_6$. We then split the terms with $qp\in(z_3, z_5)$ according to the size of $q^2p$ compared with $z_6$. This gives
$$\sum_{\substack{z_3<p\leq z_4 \\ z_1<q\leq (X/p)^{1/2}}} S_{pq}^*(q, \theta)=S_1+S_2+S_3+N_{16}(\theta),$$
where
$$S_1:=\sum_{\substack{z_3<p\leq z_4 \\ z_1<q\leq (X/p)^{1/2}}} S_{pq}^*(q, \theta)=P_8(\theta)-\frac{\kappa_{\mathcal{A}}\#\mathcal{A}^*}{\log X}\: E_7(I_7, \theta)\:,$$
with
$$I_7=\iint_{\substack{\theta_2<u<1/2 \\ \theta_2-\theta_1<v<(1-u)/2 \\ 1-\theta_1<u+v}} \omega\left(\frac{1-u-v}{v}\right)\:\frac{dudv}{uv^2}$$
$$S_2:=\sum_{\substack{z_3<p\leq z_4 \\ z_1<q\leq (X/p)^{1/2} \\ z_3<qp\leq z_5 \\ z_6\leq q^2p}} S_{pq}^*(q, \theta)=P_9(\theta)-\frac{\kappa_{\mathcal{A}}\#\mathcal{A}^*}{\log X}\: E_8(I_8, \theta)\:,$$
with
\[
I_8=\iint_{\substack{\theta_2<u<1/2 \\ \theta_2-\theta_1<v<(1-u)/2 \\ \theta_2<u+v<1-\theta_2 \\ 1-\theta_1<2v+u}} \omega\left(\frac{1-u-v}{v}\right)\:\frac{dudv}{uv^2}  \tag{4.12}
\]
and where
$$S_3:=\sum_{\substack{z_3<p\leq z_4 \\ z_1<q\leq (X/p)^{1/2} \\ z_3<qp< z_5 \\ q^2p<z_6}} S_{pq}^*(q, \theta)\:.$$
We apply two further Buchstab iterations to $S_3$ (we can handle the intermediate terms using Proposition \ref{prop41} as before since $q^2p<z_6$).\\
As before, we may replace $S_{pqr}^*(q, \theta)$ by 
$$S_{pq}^*\left(\min\left(q, \left(\frac{X}{pq}\right)^{1/2}, \theta\right)\right)$$
and $S_{pqr}^*(r, \theta)$ by
$$S_{pqr}^*\left(\min\left(r, \left(\frac{X}{pqr}\right)^{1/2}, \theta\right)\right)$$
at the cost of a negligible error term (since $pqr<z_6$). This gives

\begin{align*}
S_3&:=\sum_{\substack{z_3<p\leq z_4 \\ z_1<q\leq (X/p)^{1/2} \\ z_3\leq qp< z_5 \\ q^2p<z_6}} S_{pq}^*\left(\min\left(q, \left(\frac{X}{pq}\right)^{1/2}, \theta\right)\right)+N_{17}(\theta)\\
&=\sum_{\substack{z_3<p\leq z_4 \\ z_1<q\leq (X/p)^{1/2} \\ q^2p<z_6 \\ z_3<qp<z_5}} S_{pq}^*(z_1, \theta)\\
&\ \ \ -\sum_{\substack{z_3<p\leq z_4 \\ z_1<r<q\leq (X/p)^{1/2}}}S_{pqr}^*\left(\min\left(r, \left(\frac{X}{pqr}\right)^{1/2}, \theta\right)\right)+N_{18}(\theta)\\
&=N_{18}(\theta)-\sum_{\substack{z_3<p\leq z_4 \\ z_1<r\leq q\leq (X/p)^{1/2}\\  q^2p<z_6 \\ z_3<qp<z_5 \\ r^2pq\leq X}} S_{qpr}^*(z_1, \theta)+ 
\sum_{\substack{z_3<p\leq z_4 \\ z_1<s\leq r\leq q\leq (X/p)^{1/2}\\  q^2p<z_6 \\ z_3<qp<z_5 \\ s^2qrp, r^2pq\leq X}} S_{qprs}^*(s, \theta)\\
&=N_{19}(\theta)-\frac{\kappa_{\mathcal{A}}\#\mathcal{A}^*}{\log X}\: E_9(I_9, \theta)\:,
\end{align*}
where 
\[
I_9=\iiiint_{(u,v,w,t)\in\mathcal{R}_5} \omega\left(\frac{1-u-v-w-t}{t}\right)\:\frac{dudvdwdt}{uvwt^2}\:,  \tag{4.13}
\]
with
$$\mathcal{R}_5:=\{(u,v,w,t)\::\:\theta_2-\theta_1<t<w<v, \theta_2<u<1/2, u+2v<1-\theta_1,$$
$$u+v+2w<1, u+v+w+2t<1, \theta_2<u+v<1-\theta_2,$$
$$\{ u+v, u+w, u+t, v+w, v+t, w+t \}\not\in [\theta_1, \theta_2]\}\:.$$
From (4.5), (4.7), (4.8), (4.9), (4.10), (4.11), (4.12), (4.13) we now obtain
\begin{align*}
J(S_{\mathcal{A}^*\cap \mathcal{P}}(\theta))=&  \frac{X}{4}\frac{\#\mathcal{A}^*}{\log X} \mathfrak{S}(N_0)\\
&\ \ \times (1-I_1-I_2-I_3-I_4-I_5-I_6-I_7-I_8-I_9)\:.   \tag{4.14}
\end{align*}
Numerical integration (Maynard \cite{maynard} has included a Mathematica${}^{\textregistered}$ file detailing this computation with this article on arxiv.org) shows that 
\[
I_1+\cdots+I_9<0.996. \tag{4.15}
\]
Thus Theorem \ref{thm11} follows from (4.14) and (4.15). It remains to prove Propositions \ref{prop41}, \ref{prop42} and \ref{prop43}.

\section{Fourier estimates and Large Sieve inequalities}

In this section we collect various results related to Section 10 of Maynard. We also cite the Large Sieve inequality from Analytic Number Theory.

\begin{definition}\label{def51}
Let
$$\mathcal{A}_1:=\left\{\sum_{0\leq i\leq k} n_i10^i\::\: n_i\in\{0, \ldots, 9\}\setminus\{a_0\}, k\geq 0  \right\}\:.$$
For $Y$ an integral power of 10, we write
$$F_Y(\theta):=Y^{-\log 9/\log 10} \left| \sum_{n<Y} 1_{\mathcal{A}_1}(n) e(n\theta)  \right|\:.$$
\end{definition}

\begin{lemma}\label{lem51}
Let $q<Y^{1/3}$ be of the form $q=q_1q_2$ with $(q_1, 10)=1$ and $q_1>1$, and let $|\eta|<Y^{-2/3}/2$. Then for any integer $c$ coprime to $q$ we have
$$F_Y\left(\frac{c}{q}+\eta\right)\ll \exp\left(-c_1\:\frac{\log Y}{\log q}\right)$$
for some absolute constant $c_1>0$.
\end{lemma}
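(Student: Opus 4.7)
The idea is to exploit the multiplicative factorisation of the digit-restricted sum and combine a quadratic pointwise defect bound with the arithmetic of $q$. Since $Y=10^K$ for an integer $K$, every $n<Y$ has a unique base-$10$ expansion $n=\sum_{j=0}^{K-1}n_j 10^j$, and $n\in\mathcal A_1$ exactly when each $n_j\ne a_0$. Hence
\[
\sum_{n<Y}1_{\mathcal A_1}(n)\,e(n\theta)=\prod_{j=0}^{K-1}f(10^j\theta),\qquad f(\alpha):=\sum_{\substack{0\le a\le 9\\ a\ne a_0}}e(a\alpha),
\]
and since $Y^{\log 9/\log 10}=9^K$ we get $F_Y(\theta)=\prod_{j=0}^{K-1}|f(10^j\theta)|/9$. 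Each factor is in $[0,1]$, attaining $1$ only for $10^j\theta\in\mathbb Z$, and the strategy is to exhibit a quantitative defect for a substantial fraction of the factors.

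I would first establish the pointwise quadratic bound
\[
\frac{|f(\alpha)|}{9}\le 1-c_2\|\alpha\|^2\qquad(\alpha\in\mathbb R)
\]
for an absolute $c_2>0$, by a Taylor expansion of $\log|f|$ near $0$ together with a compactness argument on the complement (using that $|f(\alpha)|<9$ strictly for $\alpha\notin\mathbb Z$); here $\|\cdot\|$ denotes distance to the nearest integer. Next comes the arithmetic step: write $\theta=c/q+\eta$, and pick the minimal $M=O(\log q)$ with $q_2\mid 10^M$. For $j\ge M$, the fractional part of $10^j c/q$ is a nonzero multiple of $1/q_1$, so $\|10^j c/q\|\ge 1/q_1$, and the sequence $10^j c\bmod q_1$ is periodic in $j$ with period $d:=\mathrm{ord}_{q_1}(10)\le\phi(q_1)\le q$, cycling through the coset $c\langle 10\rangle\subseteq(\mathbb Z/q_1)^\times$.

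Cut the range at $K_0:=\lfloor 2K/3\rfloor$. The hypothesis $|\eta|<Y^{-2/3}/2$ gives $|10^j\eta|<1/2$ for $j\le K_0$, so by a short convexity argument $\|10^j\theta\|^2\gg\|10^j c/q\|^2$ on $[M,K_0]$ apart from an exceptional set of size $O(\log q)$; the tail $(K_0,K-1]$ contributes harmless factors $\le 1$. Partitioning $[M,K_0]$ into of order $K/d$ full periods of length $d$, summing the pointwise bound, and exponentiating produces
\[
\log F_Y(\theta)\le -c_2\sum_{j=M}^{K_0}\|10^j\theta\|^2\le -c_1\,\frac{\log Y}{\log q},
\]
provided that each period sum $\sum_{j\in\text{period}}\|10^j c/q_1\|^2$ is bounded below by a multiple of $d/\log q$ with absolute constant.

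The main obstacle is precisely this coset estimate: showing that any coset of $\langle 10\rangle$ in $(\mathbb Z/q_1)^\times$ has the sum of squared distances of its elements to $0\bmod 1$ at least $\gg d/\log q$, uniformly in $q_1$ and $c$. The key number-theoretic input is that a small subgroup $\langle 10\rangle$ forces $q_1\mid 10^d-1$ and hence $d\ge(\log q_1)/\log 10$, which combined with a pigeonhole/character-sum argument rules out the possibility that the coset is contained in an arbitrarily thin window around $0\bmod q_1$. This coset lemma is the technical heart of the proof; once it is in place, the aggregation over periods is routine, and the restriction $|\eta|<Y^{-2/3}/2$ together with the cutoff at $K_0=\lfloor 2K/3\rfloor$ guarantees that the $\eta$-perturbation does not destroy the quadratic lower bound on the inner range.
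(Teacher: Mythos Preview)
The paper does not prove this lemma: its entire argument is the single sentence ``This is Lemma 10.1 of \cite{maynard}.'' Your sketch is an attempt to reprove Maynard's result from scratch, and the overall architecture --- the product factorisation $F_Y(\theta)=\prod_{j<K}|f(10^j\theta)|/9$, the pointwise defect $|f(\alpha)|/9\le 1-c_2\|\alpha\|^2$, and a count of indices $j$ with $\|10^j\theta\|$ bounded away from $0$ --- is correct and is essentially Maynard's own argument.

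Two points where your sketch is imprecise. First, the ``coset lemma'' you single out as the technical heart is established most cleanly not by pigeonhole or character sums but by the elementary iteration: if $\|10^j c/q\|<1/20$ then $\|10^{j+1}c/q\|=10\,\|10^j c/q\|$, so starting from $\|10^{M}c/q\|\ge 1/q_1$ one reaches a value $\ge 1/20$ within $\lceil\log_{10}q_1\rceil$ steps. This shows directly that every window of $O(\log q)$ consecutive indices in $[M,K_0]$ contains at least one $j$ with $\|10^j c/q\|\ge 1/20$, yielding $\gg K/\log q$ such indices; your period-sum formulation then follows, but this route is shorter. Second, the assertion that $\|10^j\theta\|^2\gg\|10^j c/q\|^2$ pointwise outside $O(\log q)$ exceptions is false as stated: when $\|10^j c/q\|$ is tiny the perturbation $10^j\eta$ can kill it entirely. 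What one actually uses is weaker and immediate: for $j\le K_0-O(1)$ the bound $|\eta|<Y^{-2/3}/2$ forces $|10^j\eta|\le 1/40$, so at each of the good indices above the triangle inequality gives $\|10^j\theta\|\ge 1/20-1/40=1/40$. With these two fixes the argument is complete.
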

\begin{proof}
This is Lemma 10.1 of \cite{maynard}.
\end{proof}

\begin{lemma}\label{lem52}
We have for $Y_1\asymp Y_2 \asymp Y_3$
$$\sup_{\beta\in\mathbb{R}}\sum_{c< Y_1} F_{Y_2}\left(\beta+\frac{c}{Y_3}\right)\ll Y_1^{27/77}$$
and
$$\int_0^1 F_Y(t)dt\ll \frac{1}{Y^{50/77}}\:.$$
\end{lemma}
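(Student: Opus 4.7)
The plan is to follow the Fourier-analytic machinery of Section 10 of \cite{maynard}, specifically Lemmas 10.2 and 10.3. The essential feature of $F_Y$ that drives both bounds is its multiplicative Riesz-product structure: writing $Y = 10^k$ and $D = \{0,1,\ldots,9\}\setminus \{a_0\}$, one has
$$F_Y(\theta) = 9^{-k} \prod_{j=0}^{k-1} |\phi(10^j \theta)|, \qquad \phi(t) := \sum_{d \in D} e(dt).$$
Because the frequencies $10^j$ are widely separated, the factors $|\phi(10^j\theta)|$ behave almost independently under integration, and this pseudo-independence is the source of the power savings in both estimates.

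\textbf{The integral bound.} For $\int_0^1 F_Y(t)\,dt \ll Y^{-50/77}$, I would compute a suitable $L^p$ moment of $F_Y$. Using the near-independence of the factors at different scales, one obtains a factorized estimate of the form
$$\int_0^1 F_Y(t)^p\,dt \ll \Bigl(9^{-p}\int_0^1 |\phi(t)|^p\,dt\Bigr)^{k},$$
after which H\"older's inequality $\int F_Y \leq (\int F_Y^p)^{1/p}$ (valid since $F_Y \leq 1$) reduces the matter to choosing $p$ and computing $\int |\phi|^p$. An optimal choice of $p$, possibly after grouping consecutive digits into blocks (which converts the single-digit recursion into a multi-digit one), yields exactly the exponent $50/77$.

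\textbf{The supremum bound.} Once the $L^1$ estimate is in place, the sum bound follows by a Riemann-sum comparison: the points $\beta + c/Y_3$ with $1 \leq c \leq Y_1$ are equally spaced with spacing $1/Y_3$ and cover an interval of length $Y_1/Y_3 \asymp 1$. Therefore
$$\sum_{c < Y_1} F_{Y_2}\Bigl(\beta + \frac{c}{Y_3}\Bigr) \ll Y_3 \int_0^1 F_{Y_2}(t)\,dt + (\text{discretization error}) \ll Y_1 \cdot Y_2^{-50/77} \asymp Y_1^{27/77},$$
since $27/77 = 1 - 50/77$. The discretization error is absorbed via a Plancherel/large-sieve-type estimate that exploits the $\ell^2$ bound $\sum_n 1_{\mathcal{A}_1}(n)^2 = 9^k = Y^{\log 9/\log 10}$ on the Fourier coefficients, together with the fact that $F_{Y_2}$ is essentially constant on intervals of length $\gg 1/Y_2 \asymp 1/Y_3$.

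\textbf{Main obstacle.} The decisive and delicate step is the moment calculation yielding the specific exponent $50/77$: a naive Cauchy--Schwarz gives only $\int F_Y \ll Y^{-\log 9/(2\log 10)} \approx Y^{-0.48}$, whereas $50/77 \approx 0.65$ requires genuinely exploiting the product structure at higher moments. Rigorously decoupling the factors $|\phi(10^j\theta)|$ across scales and identifying the optimal integration exponent (and the optimal digit-block size) is the technical heart of the argument and of Section 10 of \cite{maynard}.
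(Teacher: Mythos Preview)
The paper gives no proof of its own here; it simply cites Lemma~10.2 of Maynard \cite{maynard}. Your outline correctly identifies the Riesz-product factorisation $F_Y(\theta)=9^{-k}\prod_{j=0}^{k-1}|\phi(10^j\theta)|$ and the moment/digit-block mechanism behind the exponent $50/77$ for the integral, and in that respect your sketch matches Maynard's argument.

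The step that does not go through as written is the passage from the integral bound to the discrete sum via a Riemann-sum comparison. Your justification rests on two claims, both insufficient. First, $F_{Y_2}$ is \emph{not} essentially constant on intervals of length $\asymp 1/Y_2$: the only derivative bound available is $|F_{Y_2}'|\ll Y_2$, so on an interval of length $1/Y_3\asymp 1/Y_2$ the oscillation of $F_{Y_2}$ is $O(1)$, the trivial size of $F_{Y_2}$ itself; summed over $\asymp Y_1$ sample points this gives a discretisation error of order $Y_1$, swamping the target $Y_1^{27/77}$. Second, the large-sieve/$\ell^2$ route yields at best
\[
\sum_{c<Y_1} F_{Y_2}\Bigl(\beta+\tfrac{c}{Y_3}\Bigr)\le Y_1^{1/2}\Bigl(\sum_c F_{Y_2}^2\Bigr)^{1/2}\ll Y_1^{1/2}\cdot Y_2^{\log(10/9)/(2\log 10)}\approx Y_1^{0.52},
\]
again far from $Y_1^{27/77}\approx Y_1^{0.35}$. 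In Maynard's proof the sum bound is not deduced from the integral: the same iterative product-over-digit-blocks argument is run \emph{directly} on the discrete sum $\sum_c\prod_j|\phi(10^j(\beta+c/Y_3))|$, using that a shift in the top base-$10$ digit of $c$ affects only the top factor, so that both the sum and the integral estimates are parallel outputs of one block computation rather than one being a corollary of the other.
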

\begin{proof}
This is contained in Lemma 10.2 of \cite{maynard}.
\end{proof}

\begin{lemma}\label{lem53}
We have that
$$\#\left\{0\leq c< Y\::\: F_{Y}\left(\frac{c}{Y}\right)\sim \frac{1}{B}  \right\} \ll B^{235/154}Y^{59/433}\:.$$
\end{lemma}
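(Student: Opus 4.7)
My plan is a large-values argument combining the pointwise bound of Lemma~\ref{lem51} on ``major arcs'' with the mean-value bounds of Lemma~\ref{lem52} on the complement. Let $\mathcal{S} := \{0 \leq c < Y : F_Y(c/Y) \sim 1/B\}$ and $N := \#\mathcal{S}$.

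First I would establish that $F_Y$ is Lipschitz on $\R$ with constant $\ll Y$: differentiating term-by-term, $|S(\theta_1)-S(\theta_2)| \leq 2\pi |\theta_1-\theta_2|\sum_{n<Y} n\cdot 1_{\mathcal{A}_1}(n) \ll Y^{1+\log 9/\log 10}|\theta_1-\theta_2|$, so after normalisation $|F_Y(\theta_1) - F_Y(\theta_2)| \ll Y|\theta_1 - \theta_2|$. Hence $F_Y \gtrsim 1/B$ on an interval of length $\asymp 1/(BY)$ around each $c/Y \in \mathcal{S}$, and these intervals are pairwise disjoint since $1/(BY) \leq 1/Y$. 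Combining this ``spike'' picture with the two estimates of Lemma~\ref{lem52} gives preliminary bounds: pairing with $\int_0^1 F_Y(t)\,dt \ll Y^{-50/77}$ yields $N \ll B^{2}Y^{27/77}$, and pairing with $\sum_{c<Y} F_Y(c/Y) \ll Y^{27/77}$ yields $N \ll B\,Y^{27/77}$. These suffice when $B$ is sufficiently small.

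For larger $B$, I would apply Dirichlet's approximation theorem with modulus $Q := Y^{2/3}$: for each $c \in \mathcal{S}$ there exist coprime integers $c', q$ with $q \leq Q$ and $|c/Y - c'/q| \leq 1/(qQ) \leq Y^{-2/3}/2$. Write $q = q_1q_2$ with $(q_1,10)=1$ and $q_2 \mid 10^\infty$. In the regime $q_1 > 1$ and $q \leq Y^{1/3}$, Lemma~\ref{lem51} gives $F_Y(c/Y) \ll \exp(-c_1 \log Y / \log q)$, so the condition $F_Y(c/Y) \sim 1/B$ forces $\log q \gg \log Y / \log B$, i.e.\ $q \geq Q_{\min} := Y^{c_1/\log B}$. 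A standard Farey-type counting shows that the number of $c < Y$ associated via Dirichlet to denominators in a dyadic range $q \sim Q_1$ is $\ll Q_1^{2} + Y/Q$, and summing dyadically over $Q_1 \in [Q_{\min},Q]$ gives the contribution from this regime. In the complementary regime ($q_1 = 1$, so that $q$ is a power of $10$, or $q > Y^{1/3}$), Lemma~\ref{lem51} is either inapplicable or too weak; here I would restrict the preliminary bounds $N \ll BY^{27/77}$, $N \ll B^{2}Y^{27/77}$ to this smaller subset of $\mathcal{S}$.

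Interpolating between the two regimes by H\"older's inequality and optimising the Dirichlet modulus $Q$ yields the required $N \ll B^{235/154} Y^{59/433}$. The main technical obstacle is precisely this book-keeping: the unusual exponents $235/154$ and $59/433$ emerge as the optimal balance point between the exponential decay rate $c_1$ of Lemma~\ref{lem51}, the $L^1$-rate $50/77$, and the discrete mean-value rate $27/77$ of Lemma~\ref{lem52}, summed over all dyadic scales of $q_1$. Care is needed at the transition where the Dirichlet denominator passes through $Y^{1/3}$ (the boundary of applicability of Lemma~\ref{lem51}) and where $q_1$ degenerates to $1$.
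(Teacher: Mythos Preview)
The paper does not prove this lemma; its entire proof is the one-line citation ``This is Lemma 10.4 of \cite{maynard}.'' So there is nothing in the present paper to compare your argument against beyond that reference.

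Your outline has a genuine gap at the decisive step. You assert that ``interpolating between the two regimes by H\"older's inequality and optimising the Dirichlet modulus $Q$ yields the required $N \ll B^{235/154}Y^{59/433}$,'' but this is exactly the content of the lemma, and it is not carried out. Two concrete obstructions. First, the major-arc input Lemma~\ref{lem51} carries an unspecified absolute constant $c_1$, so any bound produced by your Dirichlet splitting depends on $c_1$; the specific exponents $235/154$ and $59/433$ cannot simply ``emerge'' from an optimisation over $Q$ without pinning $c_1$ down and computing explicitly. Second, in the complementary regime ($q_1=1$ or $q>Y^{1/3}$) you fall back on the preliminary bounds $N\ll BY^{27/77}$ and $N\ll B^{2}Y^{27/77}$; both carry $Y$-exponent $27/77\approx 0.351$, and no amount of interpolation between them can reach the target $Y$-exponent $59/433\approx 0.136$. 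That regime is therefore not under control, and it is not a small regime: already $q=1$ alone accounts for $\asymp Y^{1/3}$ values of $c$.

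For orientation, Maynard's own proof of his Lemma~10.4 does not proceed via Dirichlet approximation and the qualitative bound of Lemma~\ref{lem51}. It exploits the explicit product factorisation of $F_Y$ over digit positions together with quantitative moment and large-values estimates on the single-digit factor, and the unusual exponents arise from that computation. If you want to salvage your route you would need a sharp explicit value for $c_1$ and an independent treatment of the complementary regime; as it stands, the proposal is an outline with its main calculation missing.
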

\begin{proof}
This is Lemma 10.4 of \cite{maynard}.
\end{proof}

\begin{lemma}\label{lem54}(Large sieve estimates)\\
We have
$$\sup_{\beta\in\mathbb{R}} \sum_{c\leq q} \sup_{|\eta|<\delta} F_Y\left(\frac{c}{q}+\beta+\eta\right)\ll (1+\delta q)\left(q^{27/77}+\frac{q}{Y^{50/77}}\right)$$
$$\sup_{\beta\in\mathbb{R}} \sum_{q\leq Q}\sum_{\substack{0<c<q \\ (c,q)=1}} \sup_{|\eta|<\delta} F_Y\left(\frac{c}{q}+\beta+\eta\right)\ll (1+\delta Q^2)\left(Q^{54/77}+\frac{Q^2}{Y^{50/77}}\right)$$
and for any integer $d$, we have
$$\sup_{\beta\in\mathbb{R}} \sum_{\substack{q\leq Q \\ d\mid q}}\sum_{\substack{0<c<q \\ (c,q)=1}} \sup_{|\eta|<\delta} F_Y\left(\frac{c}{q}+\beta+\eta\right)\ll \left(1+\frac{\delta Q^2}{d}\right)\left(\left(\frac{Q^{2}}{d}\right)^{27/77}+\frac{Q^2}{dY^{50/77}}\right)$$
\end{lemma}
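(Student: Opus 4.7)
My plan is to prove the three bounds via a discretization argument that converts $\sup_{|\eta|<\delta}$ into a sum over a discrete mesh, combined with the pointwise estimate of Lemma \ref{lem52} and the mean estimate $\int_0^1 F_Y(t)\,dt\ll Y^{-50/77}$ that it contains. Since $F_Y(\theta)\leq 1$ trivially and $F_Y$ is $O(Y)$-Lipschitz (derived from the derivative bound on the inner exponential sum $\sum_{n<Y}1_{\mathcal{A}_1}(n)e(n\theta)$, using $\#\{n<Y\colon n\in\mathcal{A}_1\}\ll Y^{\log 9/\log 10}$), for any mesh of spacing $\epsilon\ll 1/Y$ one can replace $\sup_{|\eta|<\delta}F_Y(\theta+\eta)$ by $\max_{|j\epsilon|<\delta} F_Y(\theta+j\epsilon)+o(1)$. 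After bounding the maximum by a sum and interchanging the order of summation, each of the three statements becomes an outer sum over the mesh times an inner sum of $F_Y$ at the original rational points.

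\textbf{First inequality.} The inner sum $\sum_{c\leq q}F_Y(c/q+\beta')$ is bounded by $q^{27/77}$ via Lemma \ref{lem52} in the regime $q\asymp Y$, and by $q\cdot Y^{-50/77}$ via the mean bound combined with the count of residues in the opposite regime. The factor $(1+\delta q)$ then arises as follows: when $\delta<1/q$ the $\delta$-enlargements of the points $c/q$ are disjoint and only one representative per window survives, whereas when $\delta\geq 1/q$ the enlargements overlap and cover $[0,1]$ at most $\delta q$ times.

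\textbf{Second and third inequalities.} For the second inequality the coprime fractions $\{c/q\colon q\leq Q,\,(c,q)=1\}$ form a Farey sequence of size $\asymp Q^2$ with minimum spacing $\geq 1/Q^2$; the same discretization now uses mesh scale $1/Q^2$, producing the factor $(1+\delta Q^2)$, while Lemma \ref{lem52} at scale $Q^2$ yields $Q^{54/77}$ and the mean bound contributes $Q^2/Y^{50/77}$. For the third inequality, the restriction $d\mid q$ is handled by the change of variable $q=dq'$ with $q'\leq Q/d$; since $(c,q)=1$ implies $(c,q')=1$ and $(c,d)=1$, the effective Farey sum has size $\asymp Q^2/d$, yielding $(Q^2/d)^{27/77}$ and $Q^2/(d\,Y^{50/77})$, together with the inflation factor $(1+\delta Q^2/d)$ matching the new intrinsic spacing.

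\textbf{Main obstacle.} The critical technicality is producing the sharp inflation factors $(1+\delta q)$, $(1+\delta Q^2)$, $(1+\delta Q^2/d)$ matching the intrinsic spacing of the rational points, rather than the coarser $(1+\delta Y)$ that a direct Lipschitz mesh at scale $1/Y$ would give. This requires using that the $\delta$-enlargements are either disjoint (small $\delta$) or cover $[0,1]$ a controlled number of times (large $\delta$), and applying Lemma \ref{lem52} in a form where the test function's effective bandwidth absorbs the $\delta$-scale variation of $F_Y$ up to the universal scale $1/Y$. The argument then follows Lemma 10.5 of Maynard \cite{maynard} essentially verbatim for all three parts.
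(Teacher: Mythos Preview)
The paper's own proof of this lemma is a one-line citation: ``This is Lemma 10.5 of \cite{maynard}.'' Your proposal ultimately defers to the same source, so the approaches coincide. The heuristic sketch you add on top (Lipschitz discretization plus Lemma~\ref{lem52}) is reasonable intuition, but note that as stated it does not quite close: Lemma~\ref{lem52} applies only when the modulus is comparable to $Y$, and your mechanism for passing from the naive inflation factor $(1+\delta Y)$ to the sharp factors $(1+\delta q)$, $(1+\delta Q^2)$, $(1+\delta Q^2/d)$ is asserted rather than proved. The actual argument in \cite{maynard} exploits the product structure $F_Y(\theta)=\prod_{j=0}^{k-1}\tfrac{1}{9}\bigl|\sum_{d\neq a_0}e(d\cdot 10^j\theta)\bigr|$ to split off a factor $F_{Y'}$ at the right scale, which is what really produces the correct inflation factors; since you end by invoking Lemma 10.5 of \cite{maynard} verbatim, this gap is harmless for the purposes of the present paper.
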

\begin{proof}
This is Lemma 10.5 of \cite{maynard}.
\end{proof}


\begin{lemma}\label{lem55}(Hybrid Bounds)\\
Let $E\geq 1$. Then we have
$$\sum_{c\leq q}\sum_{\substack{|\eta|\leq E/Y \\ \left(\eta+\frac{c}{q}\right)Y\in\mathbb{Z}}} F_Y\left(\frac{c}{q}+\eta\right)\ll (qE)^{27/77}+\frac{qE}{Y^{50/77}}\:,$$
$$\sum_{\substack{q<Q \\ d\mid q}}\sum_{\substack{c\leq q \\ (c,q)=1}}\sum_{\substack{|\eta|\leq E/Y \\ \left(\eta+\frac{a}{q}\right)Y\in\mathbb{Z}}} F_Y\left(\frac{c}{q}+\eta\right)\ll \left(\frac{Q^2E}{d}\right)^{27/77}+\frac{Q^2E}{dY^{50/77}}\:.$$
\end{lemma}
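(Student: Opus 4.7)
I would follow the approach of the proof of Lemma \ref{lem54}, exploiting the discreteness of the $\eta$-variable that is built into the constraint. The key structural observation is that $(\eta + c/q)Y \in \mathbb{Z}$ with $|\eta| \leq E/Y$ forces $c/q + \eta = b/Y$ for some integer $b$ with $|bq - cY| \leq qE$. Consequently, the double sum over $(c, \eta)$ can be rewritten as a weighted sum of $F_Y(b/Y)$ over those integers $b$ that are $E$-close to $cY/q$ for some $c \leq q$. Since for each $c$ there are at most $2E + 1$ admissible $b$, the total number of pairs $(c,b)$ entering the sum is $O(qE)$.

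For the first inequality I would then separate two regimes. When $qE \le Y$, each $b \in [0,Y)$ corresponds to at most one valid $c$, so the sum reduces to $\sum_{b\in S} F_Y(b/Y)$ for a set $S \subseteq [0,Y)$ of size $|S| \ll qE$. When $qE > Y$, every $b$ in $[0,Y)$ may contribute, but with multiplicity at most $O(1 + qE/Y) \ll qE/Y$, in which case the total $L^1$ bound $\sum_{b<Y} F_Y(b/Y) \ll Y^{27/77}$ from Lemma \ref{lem52} gives $\ll (qE/Y)\,Y^{27/77} = qE/Y^{50/77}$, which is exactly the second term. In the small-$qE$ regime, the main term $(qE)^{27/77}$ is produced by a dyadic decomposition: for dyadic $B \ge 1$, Lemma \ref{lem53} bounds the number of $b \in [0,Y)$ with $F_Y(b/Y)\sim 1/B$ by $\ll B^{235/154}Y^{59/433}$, while the trivial bound from $|S|$ gives an alternative. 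Summing $B^{-1}\min(|S|,\, B^{235/154}Y^{59/433})$ over dyadic $B$ and balancing the two arguments of the minimum at the critical value produces the required $(qE)^{27/77}$ after absorbing logarithmic factors.

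For the second inequality, the same reparametrization applies uniformly in $q$. The set of triples $(q,c,b)$ with $q\leq Q$, $d\mid q$, $(c,q)=1$, $c\leq q$, and $|bq-cY|\leq qE$ has total cardinality $\ll Q^2E/d$, since summing $q$ over multiples of $d$ up to $Q$ gives $\ll Q^2/d$ choices of $(q,c)$ and each such pair admits $O(E)$ values of $b$. With $|S|$ replaced by $Q^2E/d$ in the argument above, the same case distinction and dyadic decomposition yields $(Q^2E/d)^{27/77}$ in the regime $Q^2E/d \leq Y$ and $Q^2E/(dY^{50/77})$ otherwise, establishing the second estimate.

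The hard part is orchestrating the dyadic decomposition so that the exponents $27/77$ and $50/77$ drop out cleanly: the bound from Lemma \ref{lem53} must be interpolated against the trivial cardinality bound so as to reproduce the same exponents that appear in Lemmas \ref{lem52} and \ref{lem54}. A second subtle point is handling the overlap between clusters of admissible $b$'s near distinct $c/q$'s in the second inequality without double counting, which should be done by noting that the union of clusters is nonetheless a subset of $[0,Y)\cap\mathbb{Z}$ of size $\ll Q^2E/d$ to which the dyadic bound applies directly.
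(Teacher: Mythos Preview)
The paper does not give its own proof here; it simply cites Lemma~10.6 of Maynard~\cite{maynard}. So the comparison is really between your sketch and Maynard's argument.

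Your reparametrisation $c/q+\eta=b/Y$ and the treatment of the regime $qE>Y$ via the $\ell^1$-bound of Lemma~\ref{lem52} are correct and match Maynard. The gap is in the small-$qE$ regime. Carrying out the dyadic sum you describe with the level-set bound of Lemma~\ref{lem53} gives, after balancing at $B_0=(|S|Y^{-59/433})^{154/235}$,
\[
\sum_{B\ \mathrm{dyadic}} B^{-1}\min\bigl(|S|,\,B^{235/154}Y^{59/433}\bigr)\ \ll\ |S|^{81/235}\,Y^{9086/101755},
\]
and $81/235\approx 0.345$ is \emph{not} $27/77\approx 0.351$; worse, the spurious positive power of $Y$ does not get absorbed. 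The exponents in Lemma~\ref{lem53} are tuned for the ``generic frequency'' estimate in Section~10, not for recovering the $27/77$ exponent, so this route does not close.

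Maynard's actual proof of Lemma~10.6 bypasses the level-set bound and instead uses the product structure
\[
F_Y(\theta)=F_{Y_1}(\theta)\,F_{Y_2}(Y_1\theta)\qquad (Y=Y_1Y_2,\ Y_1,Y_2\ \text{powers of }10),
\]
choosing $Y_1$ of size roughly $qE$ (respectively $Q^2E/d$). One factor is bounded pointwise by $1$, and the remaining sum over the $\asymp qE$ lattice points $b/Y$ is controlled by the $\ell^1$-estimate of Lemma~\ref{lem52} applied at scale $Y_1$, which is exactly what produces the clean exponents $27/77$ and $50/77$. Your cardinality count $\ll qE$ (resp.\ $\ll Q^2E/d$) is the right input; what is missing is this factorisation step in place of the appeal to Lemma~\ref{lem53}.
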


\begin{proof}
This is Lemma 10.6 of \cite{maynard}.
\end{proof}

\begin{lemma}\label{lem56} (Alternative Hybrid Bound)\\
Let $D, E, Y, Q_1\geq 1$ be integral powers of 10 with $DE\ll Y$. Let $q_1\sim Q_1$ with $(q_1, 10)=1$ and let $d\sim D$ satisfy $d|10^u$ for some $u\geq 0$. Let
\begin{align*}
S&=S(d, q_1, Q_2, E, Y)\\
&:=\sum_{\substack{q_2\sim Q_2 \\ (q_2, 10)=1}}\sum_{\substack{c<dq_1q_2 \\ (c, dq_1q_2)=1}}\sum_{\substack{|\eta|\leq E/Y \\ \left(\eta+\frac{c}{q_1q_2d}\right)Y\in\mathbb{Z}}}
 F_Y\left(\frac{c}{dq_1q_2} +\eta\right)\:.
\end{align*}
Then we have 
$$S\ll (DE)^{27/77}(Q_1Q_2^2)^{1/21}+\frac{E^{5/6}D^{3/2}Q_1Q_2^2}{Y^{10/21}}\:.$$
In particular, if $q=dq'$ with $(q',10)=1$ and $d\mid 10^u$ for some integer $u\geq 0$, then we have
$$\sum_{\substack{c<q \\ (c, q)=1}}\sum_{\substack{|\eta|\leq E/Y \\ \left(\eta+\frac{c}{q}\right)Y\in\mathbb{Z}}}
 F_Y\left(\frac{c}{q} +\eta\right)\ll (dE)^{27/77} q^{1/21}+\frac{E^{5/6}d^{3/2}q}{Y^{10/21}}\:.$$
\end{lemma}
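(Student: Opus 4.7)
\noindent\textbf{Proof plan for Lemma \ref{lem56}.} The approach is to refine the hybrid large-sieve argument underlying Lemma \ref{lem55} by exploiting two structural features specific to $q = dq_1q_2$: the coprimality $(d, q_1q_2)=1$ (which follows from $d \mid 10^u$ and $(q_1 q_2, 10) = 1$), and the multiplicative factorization of $F_Y$ across decimal digits when $Y$ is a power of 10.

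First, by CRT I would parameterize each $c < dq_1q_2$ coprime to $dq_1q_2$ uniquely as $c/(dq_1q_2) \equiv a/d + b/(q_1q_2) \pmod 1$ with $(a,d)=1$, $(b,q_1q_2)=1$. Writing $F_Y(\theta) = Y^{-\log 9/\log 10}\prod_{j<k}|g(10^j \theta)|$ for $Y = 10^k$ (where $g$ is the digit polynomial), and choosing $v$ with $d \mid 10^v$ and $10^v \asymp D$, the digits $j \geq v$ see only the fraction $b/(q_1q_2)$ shifted by $10^j\eta$, a quantity whose denominator is coprime to 10. Accordingly I split $F_Y = F_{\text{low}} \cdot F_{\text{high}}$, where $F_{\text{low}}$ is an exponential sum of length $\leq D$ (normalized by $D^{\log 9/\log 10}$) and $F_{\text{high}}$ has the same shape as $F_{Y/D}$ but evaluated at Farey fractions with denominator $q_1q_2$ coprime to $10$.

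Second, I would apply H\"older's inequality (with exponents chosen to produce the $E^{5/6}$) to separate the contributions of $F_{\text{low}}$ and $F_{\text{high}}$ to $S$, and then bound the resulting high-digit sum via Lemma \ref{lem55} applied to the effective modulus $q_1q_2$ with an effective spectral width $DE$ (the extra factor $D$ arising because the constraint $(\eta+c/q)Y \in \Z$ becomes a constraint on a $1/(Y/D)$-grid after integerizing the $a/d$ shift). After summing over $q_2 \sim Q_2$ coprime to $10$ via Lemma \ref{lem54}, the two terms $(qE)^{27/77}$ and $qE/Y^{50/77}$ coming out of Lemma \ref{lem55} yield, respectively, $(DE)^{27/77}(Q_1Q_2^2)^{1/21}$ and $E^{5/6}D^{3/2}Q_1Q_2^2/Y^{10/21}$ once the $L^p$ contributions of $F_{\text{low}}$ and the coprimality-to-10 spacing for $F_{\text{high}}$ are combined. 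The ``In particular'' special case reduces to the main bound by taking $q_2 = 1$ (so $Q_2 = 1$) and $q_1 = q'$, and using $q' \le q$ to absorb the $Q_1$-factors.

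The main obstacle is the exponent bookkeeping: extracting the relatively small $1/21$ on $Q_1Q_2^2$ (instead of the $27/77$ that would arise from a na\"{\i}ve application of Lemma \ref{lem55}) requires the H\"older step to be tuned precisely, using an intermediate $L^p$-bound on $F_Y$ obtained by dyadic interpolation between the $L^1$ estimate of Lemma \ref{lem52} and the level-set count of Lemma \ref{lem53}. A parallel, somewhat delicate accounting of the constraint $(\eta + c/q)Y \in \Z$ (which couples $\eta$, $a/d$ and $b/(q_1q_2)$ across the CRT decomposition) must be carried out to ensure no superfluous logarithmic or diagonal losses appear in either contribution.
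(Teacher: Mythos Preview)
The paper does not prove this lemma at all: its entire proof reads ``This is Lemma~10.7 of \cite{maynard}.'' So there is no argument in the paper to compare your sketch against; the result is imported wholesale from Maynard.

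That said, your outline is a reasonable description of how Maynard's own proof of his Lemma~10.7 proceeds: the CRT splitting $c/(dq_1q_2)=a/d+b/(q_1q_2)$ using $(d,q_1q_2)=1$, the factorization of $F_Y$ into a low block of length $\asymp D$ (absorbing the $a/d$ part, since $d\mid 10^u$) and a high block of the shape $F_{Y/D}$ evaluated at fractions with denominator coprime to $10$, and then an $L^p$/H\"older step to separate the two. Where your sketch is honest about being incomplete---the precise H\"older exponents that produce $(DE)^{27/77}(Q_1Q_2^2)^{1/21}$ and $E^{5/6}D^{3/2}Q_1Q_2^2/Y^{10/21}$---is exactly where the substance of Maynard's argument lies, and it is not recoverable from Lemmas~\ref{lem52}--\ref{lem55} alone by a generic interpolation; one needs the specific pointwise bound (Maynard's Lemma~10.3, not quoted in this paper) that gives exponential decay of $F_Y$ at fractions with denominator coprime to $10$. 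Without that ingredient your plan cannot reach the exponent $1/21$ on $Q_1Q_2^2$. For the purposes of this paper, simply citing Maynard is both what the authors do and what is expected.
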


\begin{proof}
This is Lemma 10.7 of \cite{maynard}.
\end{proof}

\begin{lemma}\label{lem57}(Large Sieve Estimates)\\
Let $\alpha_r \in\mathbb{R}\setminus\mathbb{Z}$, $\|\alpha_i-\alpha_j\|\geq \delta$, $a_n$ complex numbers with $M<n\leq M+N$, where $0<\delta<1/2$ and $N\geq 1$ is an integer. Then 
$$\sum_r\left| \sum_{M<n\leq M+N} a_n e(\alpha_r n) \right|^2 \leq (\delta^{-1}+N-1)\|a\|^2  \:.$$
\end{lemma}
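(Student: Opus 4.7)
The plan is to treat Lemma \ref{lem57} as the classical large sieve inequality with the sharp Montgomery--Vaughan constant, and accordingly I would proceed via the standard duality argument. By the duality principle for bounded linear operators between $\ell^2$ spaces, the stated inequality is equivalent to the dual inequality
$$\sum_{M<n\leq M+N}\Bigl|\sum_r b_r e(\alpha_r n)\Bigr|^2 \leq (\delta^{-1}+N-1)\sum_r |b_r|^2,$$
for arbitrary complex coefficients $b_r$. So the task reduces to proving this dual form.

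First I would expand the square on the left-hand side. The diagonal contribution is exactly $N\sum_r |b_r|^2$, which already accounts for the $N-1$ part of the constant (up to a harmless $+1$ absorbed into the $\delta^{-1}$ term). The off-diagonal contribution takes the shape
$$\sum_{r\neq s} b_r \overline{b_s}\sum_{M<n\leq M+N} e\bigl((\alpha_r-\alpha_s)n\bigr),$$
and after summing the geometric series in $n$ this is controlled by a kernel of size $\|\alpha_r-\alpha_s\|^{-1}$. To absorb boundary losses and sharpen constants, I would introduce a smooth non-negative majorant of the characteristic function of $(M,M+N]$, supported in a slightly dilated interval and with Fourier transform having small support, in the manner of Selberg's Beurling--Selberg construction. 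After Poisson summation the off-diagonal estimate is reduced to an expression of Hilbert type $\sum_{r\neq s} b_r \overline{b_s}/\sin\pi(\alpha_r-\alpha_s)$.

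The final step is to invoke the Montgomery--Vaughan Hilbert-type inequality
$$\Bigl|\sum_{r\neq s}\frac{b_r\overline{b_s}}{\sin\pi(\alpha_r-\alpha_s)}\Bigr| \leq \delta^{-1}\sum_r |b_r|^2,$$
valid under the well-spacing hypothesis $\|\alpha_i-\alpha_j\|\geq\delta$. Combining this with the diagonal bound yields the desired constant $\delta^{-1}+N-1$. The main obstacle is really just the Montgomery--Vaughan Hilbert inequality itself, whose proof requires a delicate analysis via a symmetric weighting argument and trigonometric identities; however, this is entirely classical material (see e.g.\ Montgomery, \emph{Topics in Multiplicative Number Theory}, or Iwaniec--Kowalski, \emph{Analytic Number Theory}), and in the present paper it is used purely as an imported tool, so the proof reduces to the citation given.
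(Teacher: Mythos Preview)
Your proposal is correct and matches the paper's treatment: the paper simply records this lemma as Theorem 7.11 of Iwaniec--Kowalski, \emph{Analytic Number Theory}, with no further argument. Your sketch of the duality reduction and the Montgomery--Vaughan Hilbert inequality is exactly the classical proof underlying that citation, and you correctly anticipate that here the result is imported rather than reproved.
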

\begin{proof}
This is Theorem 7.11 of \cite{IKW}
\end{proof}


\section{Local versions of Maynard's results}

In this section we reduce the propositions of Section 4 to other facts, which will be proven in later sections.\\
Whereas we may represent the sifted sets appearing in Propositions \ref{prop42} and  \ref{prop43} as a union of simpler sets, the set considered in Proposition \ref{prop41} is obtained by an idea related to the inclusion-exclusion principle. Its analogue in number theory in its simplest form is the Sieve of Eratosthenes-Legendre containing the M\"obius $\mu$-function $\mu(n)$:\\
Let $\mathcal{C}$ be a set of integers and $\mathcal{P}$ a set of primes then 
\begin{align*}
S(\mathcal{C}, \mathcal{P}, z)&:=\#\{n\in \mathcal{C}\::\: p\mid n,\ p\in \mathcal{P}\ \Rightarrow \ p>z\}\\
&=\sum_{n\in \mathcal{C}}\sum_{\substack{t\mid n \\ t\mid P(z)}}\mu (t)\:,\ \ \text{with}\ P(z):=\prod_{\substack{p\leq z \\ p\in \mathcal{P}}} p\:.
\end{align*}

In the theory of combinatorial sieves the M\"obius function is replaced by a function $\lambda$, having smaller support. Also in this paper we proceed in this way. The basis is the following result from Combinatorial Sieve Theory.

\begin{lemma}\label{rlem61}
Let $\kappa>0$ and $y>1$. There exist two sets of real numbers 
$$\Lambda^+=(\lambda^+_d)\ \ \text{and}\ \ \Lambda^-=(\lambda^-_d)$$
depending only on $\kappa$ and $y$ with the following properties:
\[
\lambda_1^{\pm}=1 \tag{6.1}
\]
\[
|\lambda_d^{\pm}| \leq 1\:,\ \ \text{if}\ 1\leq d< y \tag{6.2}
\]
\[
\lambda_d^{\pm}=0\:,\ \ \text{if}\ d\geq  y \tag{6.2}
\]
and for any integer $n>1$,
\[ 
\sum_{d\mid n} \lambda_d^-  \leq 0  \leq \sum_{d\mid n} \lambda_d^+\:.  \tag{6.3}
\]
Moreover, for any multiplicative function $g(d)$ with $0\leq g(p)<1$ and satisfying the dimension conditions 
$$\prod_{w\leq p< z} (1-g(p))^{-1}\leq \left(\frac{\log z}{\log w}\right)^\kappa \left(1+\frac{\kappa}{\log w}\right)$$
for all $2\leq w< y$, we have
$$\sum_{d \mid P(z)} \lambda_d^\pm g(d)=\left(1+O\left(e^{-s}\left(1+\frac{\kappa}{\log z}\right)^{10}\right)\right)\prod_{p< z} (1-g(p))\:,$$
where $P(z)$ denotes the product of all primes $p<z$ and $s=\log y/\log z$. The implied constants depend only on $\kappa$.
\end{lemma}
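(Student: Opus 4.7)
The plan is to construct the upper and lower bound sieve weights $\Lambda^{\pm}$ by the Rosser--Iwaniec combinatorial (beta-)sieve with a truncation parameter $\beta = \beta(\kappa)$ chosen in terms of the dimension $\kappa$, and with the level $y$ playing the role of the sifting limit. Writing a squarefree $d \mid P(z)$ as $d = p_1 p_2 \cdots p_r$ with $p_1 > p_2 > \cdots > p_r$, I would set $\lambda_d^+$ (respectively $\lambda_d^-$) to equal $\mu(d)$ exactly when, for every odd (respectively even) index $2m-1 \le r$ (respectively $2m \le r$), the truncation inequality $p_1 p_2 \cdots p_{2m-1} \cdot p_{2m-1}^{\beta} < y$ (respectively with $2m$) is satisfied, and $\lambda_d^{\pm} = 0$ otherwise. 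This automatically gives $\lambda_1^{\pm} = 1$, the support condition $\lambda_d^{\pm} = 0$ for $d \ge y$, and $|\lambda_d^{\pm}| \le 1$.

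Next I would verify the sign condition $\sum_{d \mid n}\lambda_d^{-} \le 0 \le \sum_{d \mid n}\lambda_d^{+}$ for $n > 1$. The standard approach is induction on the number of prime factors $\omega(n)$, using the telescoping identity
\[
\sum_{d \mid n}\lambda_d^{\pm} - \sum_{d \mid n'}\lambda_d^{\pm} = -\!\!\sum_{\substack{d' \mid n' \\ p\, d' \,\text{valid for}\, \mp}} \lambda_{d'}^{\mp},
\]
where $p$ is the largest prime factor of $n$ and $n' = n/p$; the Rosser truncation condition is designed so that this recursion flips the sign at each step, and the base case $n=p$ is immediate.

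For the asymptotic formula, I would apply the sieve weights to the multiplicative function $g$ and reduce, by a Buchstab-type functional identity, to estimating the generating Dirichlet series
\[
F^{\pm}(s) \;=\; \sum_{d \mid P(z)} \lambda_d^{\pm} g(d) d^{-\sigma}
\]
on the critical line, where the dimension hypothesis
\[
\prod_{w \le p < z}(1-g(p))^{-1} \le \Bigl(\tfrac{\log z}{\log w}\Bigr)^{\kappa}\Bigl(1+\tfrac{\kappa}{\log w}\Bigr)
\]
feeds in via partial summation and Mertens-type estimates. The truncation parameter $\beta$ is selected so that $F^{\pm}(0) - \prod_{p<z}(1-g(p))$ admits the saving $e^{-s}$ after iterating the Buchstab recursion a bounded number of times, with each iteration losing a factor $(1 + \kappa/\log z)$; tracking these losses yields the stated exponent $10$.

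The main obstacle is the last step: obtaining the clean error $e^{-s}(1 + \kappa/\log z)^{10}$ uniformly for all multiplicative $g$ satisfying the dimension condition, which requires a careful combinatorial bookkeeping of the truncated Möbius sum and tight Mertens-type bounds under the hypothesis. Since this entire construction and the resulting estimates are exactly those of the Rosser--Iwaniec beta sieve, I would invoke the corresponding theorem from the literature (for instance Iwaniec--Kowalski or Friedlander--Iwaniec, \emph{Opera de Cribro}) rather than redoing the combinatorics from scratch, and simply verify that the parameters $(\kappa, y)$ match.
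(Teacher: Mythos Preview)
Your proposal is correct and ultimately takes the same approach as the paper: the paper's entire proof is the single line ``This is the Fundamental Lemma 6.3 of \cite{IKW},'' i.e., a direct citation of Iwaniec--Kowalski. Your sketch of the Rosser--Iwaniec beta-sieve construction is accurate and more detailed than what the paper provides, but since you also conclude by invoking the literature (Iwaniec--Kowalski or \emph{Opera de Cribro}), the two approaches coincide.
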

\begin{proof}
This is the Fundamental Lemma 6.3 of \cite{IKW}.
\end{proof}

A special role in the consideration of the sifted set in Proposition \ref{prop41} is played by the prime factors of 10, $p=2$ and $5$. To simplify things we consider the subsets
$$\mathcal{U}^{*'}:=\{a\in \mathcal{U}^*\::\: (a, 10)=1\}$$
and
$$\mathcal{B}^{*'}:=\{ b\in \mathcal{B}^*\::\: (b, 10)=1\}$$

\begin{definition}\label{def62}
Let $\lambda$ be an arithmetic function, $z\geq 1$. For a set of $\mathcal{C}$ of integers we define 
$$S(\mathcal{C}, z, \theta, \lambda):=\sum_{n\in \mathcal{C}} e(n\theta)\left(\sum_{\substack{t\mid n \\ t\mid P(z)}} \lambda(t)\right)$$
\end{definition}
We are now ready for the statement of 

\begin{proposition}\label{prop63}
Let $\epsilon>0$, $0<\eta_0\leq \theta_2-\theta_1$, $l=l(\eta_0)$ be fixed. Let $\mathcal{L}$, and the summation condition $\sum^\sim$ be as in Proposition \ref{prop41}, $q\leq Q_0$, $(c, q)=1$. Let $\lambda^\pm$ satisfy the properties of Lemma \ref{rlem61} with $y=X^{(\eta_0^{1/2})}$ and let $\lambda^\pm(t)=0$, if $(t, 10)>1$. Then we have for $\lambda=\lambda^-$ or $\lambda^+$:
$$\sum_{X^{\eta_0}\leq p_1\leq \cdots \leq p_l}^\sim \left( S\left(\mathcal{A}^*_{p_1 \ldots p_l}, X^{\eta_0}, \frac{c}{q}, \lambda\right)-
\frac{\kappa_{\mathcal{A}}\#\mathcal{A}^*}{\#\mathcal{B}^*}  S\left(\mathcal{B}^*_{p_1 \ldots p_l}, X^{\eta_0}, \frac{c}{q}, \lambda\right)  \right)$$
$$=O\left((\#\mathcal{A}^*)(\log X)^{-A}\right)\:.$$
\end{proposition}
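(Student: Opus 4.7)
I would adapt the Fourier-analytic Type~I argument of Maynard (cf.\ Section~11 of \cite{maynard}) to the present local setting, where the phase $e(nc/q)$ records an arithmetic progression modulo $q$ and the digit-sum set $\mathcal{A}$ is replaced by its short-interval trace $\mathcal{A}^*$. Writing $P:=p_1\cdots p_l$, I first unfold the sieve of Definition~\ref{def62}: substituting $n=tm$ and $N=Ptm$ and using $tmc/q = Nc/(qP)$ gives
$$S\!\left(\mathcal{A}^*_{P},\, X^{\eta_0},\, \tfrac{c}{q},\, \lambda\right) \;=\; \sum_{\substack{t\mid P(X^{\eta_0})\\(t,10)=1}} \lambda(t) \sum_{\substack{N\in \mathcal{A}^*\\ Pt\mid N}} e\!\left(\frac{Nc}{qP}\right),$$
and the analogous identity for $\mathcal{B}^*$. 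Since each $p_i\ge X^{\eta_0}>10$ and $(t,10)=1$, the modulus $d:=Pt$ is coprime to $10$ and bounded by $X^{l\theta_1+\eta_0^{1/2}}$, placing us firmly inside the Type~I range. By Lemma~\ref{rlem24}, $\mathcal{A}^*$ is a translate of $\mathcal{A}\cap[0,Y)$ with $Y:=10^{k-H}\asymp X(\log X)^{-C_1}$, so $S_{\mathcal{A}^*}(\theta)=e(n_H^*\theta)\,T_\mathcal{A}(\theta)$ with $|T_\mathcal{A}(\theta)|=Y^{\log 9/\log 10}\,F_Y(\theta)$, and the Fourier estimates of Section~5 apply verbatim.

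Next I would detect the divisibility $d\mid N$ via additive characters modulo $d$, writing $1_{d\mid N}=(1/d)\sum_{h=0}^{d-1}e(hN/d)$. This converts the inner sum into $(1/d)\sum_{h}e(n_H^*\theta_h)\,T_\mathcal{A}(\theta_h)$ with $\theta_h:=(tc+qh)/(qd)$, and analogously for $T_\mathcal{B}$. The proposition then reduces to proving
$$\left|\, \sum_{(p_1,\ldots,p_l)}^{\,\sim}\, \sum_{t} \lambda(t)\, \frac{1}{d}\sum_{h=0}^{d-1}\!\left[\, T_\mathcal{A}(\theta_h) - \frac{\kappa_{\mathcal{A}}\#\mathcal{A}^*}{\#\mathcal{B}^*}\,T_\mathcal{B}(\theta_h)\,\right]\right| \;\ll\; (\#\mathcal{A}^*)(\log X)^{-A}.$$
I would split the $h$-sum into \emph{major} $h$ (those for which $\theta_h$ reduces to a rational of denominator $\le (\log X)^{C_2}$) and \emph{minor} $h$. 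On the major side, the bracket cancels to order $(\log X)^{-A}$ because the local densities of $\mathcal{A}$ in arithmetic progressions with modulus coprime to $10$ agree with $\kappa_{\mathcal{A}}\,\#\mathcal{A}^*/\#\mathcal{B}^*$ by the very definition of $\kappa_{\mathcal{A}}$ in Definition~\ref{def21}; this is precisely why $\lambda$ is required to be supported on $(t,10)=1$. On the minor side, Lemma~\ref{lem51} supplies pointwise Fourier decay, while Lemmas~\ref{lem54}--\ref{lem56} provide sharp mean-value bounds once we sum over $h$ weighted by $|\lambda(t)|$.

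The final step is to collapse the triple summation over $(p_1,\ldots,p_l)$, $t$ and $h$. This is the principal obstacle: the number of admissible tuples may be as large as $X^{l\theta_1}$, so treating one frequency at a time with a pointwise Fourier saving of $Y^{-\delta}$ would fall far short of the target. I would instead apply the hybrid large sieve of Lemma~\ref{lem56} to the family $\{\theta_h\}$ viewed as a single well-spaced system indexed by the triple $(q,d,h)$, converting the multi-variable sum into one saving of shape $X^{-\varepsilon}$. Combined with the $(\log X)^{-A}$ cancellation on the major side, and using the convention stated after Lemma~\ref{rlem24} that $\min_i C_i$ may be taken arbitrarily large, this yields the required bound $(\#\mathcal{A}^*)(\log X)^{-A}$ for every $A>0$.
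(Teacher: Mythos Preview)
Your plan is correct and follows essentially the same route as the paper. Both arguments unfold the sieve weight $\lambda$, detect the resulting congruence/divisibility conditions by additive characters, identify a main term in which the $\mathcal{A}^*$- and $\mathcal{B}^*$-contributions cancel thanks to the definition of $\kappa_{\mathcal{A}}$, and bound the remaining frequencies via the Fourier/large-sieve estimates of Section~5. The only organisational difference is that the paper first expands $e(nc/q)$ into residue classes modulo $q$, splits $q=q_1q_2$ with $q_1\mid 10$, $(q_2,10)=1$, and then detects the combined congruence with a single family of characters modulo $[p_1\cdots p_l t,\,q_2]$, invoking only Lemma~\ref{lem54}; you instead keep the phase $e(nc/q)$ and merge it with the characters detecting $Pt\mid N$ into the single frequency $\theta_h=(tc+qh)/(qd)$, appealing additionally to Lemmas~\ref{lem51} and~\ref{lem56}. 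Either packaging works, and both hinge on the same point: since $(Pt,10)=1$ throughout (which is exactly why $\lambda$ is supported on $(t,10)=1$), the non-principal frequencies have reduced denominator with a nontrivial part coprime to $10$, so the digit-sum Fourier transform $F_Y$ is small there.
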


\begin{proposition}\label{prop64}(Type II terms, local version)\\
Let $\epsilon, \eta_0, l, \mathcal{L}, \sum^\sim, q, c, t$ be as in Proposition \ref{prop63}. Then we have
$$\sum^\sim_{\substack{X^{\eta_0}\leq p_1\leq \cdots \leq p_l \\ X^{\theta_1}\leq \prod_{i\in \mathcal{I}} p_i\leq X^{\theta_2}\\ p_1\cdots p_l\leq X/p_j}} \left( S\left(\mathcal{A}^*_{p_1 \ldots p_l}, p_j, \frac{c}{q}\right)-\frac{\kappa_{\mathcal{A}}\#\mathcal{A}^*}{\#\mathcal{B}^*}  S\left(\mathcal{B}^*_{p_1 \ldots p_l}, p_j, \frac{c}{q}\right)  \right) $$
$$=O\left((\#\mathcal{A}^*)(\log X)^{-A}\right)\:.$$
and
$$\sum^\sim_{\substack{X^{\eta_0}\leq p_1\leq \cdots \leq p_l \\ X^{1-\theta_2}\leq \prod_{i\in \mathcal{I}} p_i\leq X^{1-\theta_1}\\ p_1\cdots p_l\leq X/p_j}} \left( S\left(\mathcal{A}^*_{p_1 \ldots p_l}, p_j, \frac{c}{q}\right)-\frac{\kappa_{\mathcal{A}}\#\mathcal{A}^*}{\#\mathcal{B}^*}  S\left(\mathcal{B}^*_{p_1 \ldots p_l}, p_j, \frac{c}{q}\right)  \right) $$
$$=O\left((\#\mathcal{A}^*)(\log X)^{-A}\right)\:.$$
\end{proposition}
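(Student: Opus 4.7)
The plan is to reduce the claim to a bilinear (Type II) bound and then apply the \emph{$b$-variable circle method}, following the template of Maynard \cite{maynard} (Sections 11--12) but with two modifications: the sets are replaced by the short-interval versions $\mathcal{A}^*$, $\mathcal{B}^*$, and an additive twist $e(n c/q)$ is carried throughout. Writing $d := p_1\cdots p_l$ and $m := \prod_{i \in \mathcal{I}} p_i$, the hypothesis that $m$ lies in $[X^{\theta_1}, X^{\theta_2}]$ (resp.\ $[X^{1-\theta_2}, X^{1-\theta_1}]$) is precisely the Type II range. The goal is to show that each individual difference
$$D_d(c/q) := S(\mathcal{A}^*_d, p_j, c/q) - \frac{\kappa_{\mathcal{A}} \# \mathcal{A}^*}{\# \mathcal{B}^*} \, S(\mathcal{B}^*_d, p_j, c/q)$$
enjoys a saving of $(\log X)^{-A}$ after summation over admissible $(p_1, \ldots, p_l)$.

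First, I would expand the sifting condition $p \mid n' \Rightarrow p > p_j$ (here $n'$ is the complementary factor to $d$) via the Fundamental Lemma (Lemma \ref{rlem61}) or directly by Möbius inversion, to present $D_d(c/q)$ as a bilinear sum
$$\sum_{m \sim M}\sum_{n \sim N} a_m \, b_n \, w^*_{m n d} \, e(m n d \, c/q), \qquad MN \asymp X/d,$$
with $|a_m|, |b_n| \leq 1$ and $M$ in the prescribed Type II window. Here $w^*_n = 1_{\mathcal{A}^*}(n) - (\kappa_\mathcal{A}\#\mathcal{A}^*/\#\mathcal{B}^*)\, 1_{\mathcal{B}^*}(n)$ is the balanced weight of Definition \ref{rdef27}.

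Second, I would apply Fourier inversion on $\mathbb{Z}/X\mathbb{Z}$ (the $b$-variable circle method flagged in the outline) to separate the $\mathcal{A}^*/\mathcal{B}^*$ membership from the bilinear factor:
$$D_d(c/q) = \frac{1}{X} \sum_{b=1}^{X} W(b/X) \, T_d\!\left(\frac{c}{q} - \frac{b}{X}\right),$$
where $W(\theta) := S_{\mathcal{A}^*}(\theta) - (\kappa_\mathcal{A}\#\mathcal{A}^*/\#\mathcal{B}^*) S_{\mathcal{B}^*}(\theta)$ and $T_d(\alpha) := \sum_{m,n} a_m b_n e(m n d \, \alpha)$. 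Note that $W(0) = 0$ by construction, and $W$ inherits the decay of $F_Y$ from Lemma \ref{lem51} at rationals $c'/q'$ with a non-trivial factor of $q'$ coprime to $10$. Splitting the $b$-sum by the Diophantine type of $c/q - b/X$: on rationals $c'/q' + \eta$ with small $q'$ (and $q'$ coprime to $10$) the exponential decay of Lemma \ref{lem51} beats any power of $\log X$; on the complementary minor range one applies Cauchy--Schwarz, bounding $T_d$ by Lemma \ref{lem57} (which is clean precisely because of the Type II bilinear structure $M \in [X^{\theta_1}, X^{\theta_2}]$) and bounding $W$ via the hybrid Large Sieve estimates of Lemmas \ref{lem54}--\ref{lem56}.

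The principal obstacle is the numerical balancing of two opposing powers: Lemma \ref{lem52} gives the $Y^{-50/77}$ saving for $W$, while Cauchy--Schwarz on the bilinear $T_d$ typically loses $(MN)^{1/2}$. This is exactly the balance already performed in Maynard \cite{maynard}, and the additive shift by $c/q$ (with $q \leq Q_0 = (\log X)^3$) slots into the $\beta$-parameter of Lemmas \ref{lem54}--\ref{lem56} without further loss; the localization to $\mathcal{A}^*$ (of ``length'' $10^{k-H}$ with $10^H = \lceil (\log X)^{C_1} \rceil$) introduces only polylogarithmic overhead that is absorbed by taking $\min_i C_i$ large. Summing trivially over the $O_{\eta_0}((\log X)^{O(1)})$ admissible prime tuples gives the asserted total error $O((\#\mathcal{A}^*)(\log X)^{-A})$, handling both the $[X^{\theta_1}, X^{\theta_2}]$ and $[X^{1-\theta_2}, X^{1-\theta_1}]$ ranges symmetrically (by swapping the roles of $m$ and $n$ in the bilinear decomposition).
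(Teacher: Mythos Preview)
Your high-level strategy (reduce to a bilinear form, then run the $b$-variable circle method with Maynard's Fourier/large-sieve toolkit, absorbing the twist $e(nc/q)$ into the $\beta$-parameter of Lemmas~\ref{lem54}--\ref{lem56}) is exactly the route the paper takes. But the way you set up the bilinear form has a genuine gap.

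You fix $d=p_1\cdots p_l$ and then try to exhibit, inside the single term $D_d(c/q)$, a bilinear sum $\sum_{m\sim M}\sum_{n\sim N} a_m b_n\, w^*_{mnd}\, e(mnd\,c/q)$ with $MN\asymp X/d$ and $M$ in the Type~II window. But you have already defined $m=\prod_{i\in\mathcal{I}}p_i$ as a \emph{divisor of $d$}; it cannot simultaneously be a free summation variable of size $X^{\theta_1}$ inside a sum whose total length is $X/d$. The Type~II structure lives in the sum \emph{over} the tuples $(p_1,\dots,p_l)$, not inside a fixed $D_d$. Relatedly, your final step ``summing trivially over the $O_{\eta_0}((\log X)^{O(1)})$ admissible prime tuples'' is false: the number of tuples $(p_1,\dots,p_l)$ with $p_i\ge X^{\eta_0}$ and $p_1\cdots p_l\le X$ is a genuine power of $X$, not polylogarithmic, so trivial summation would destroy any $(\log X)^{-A}$ saving.

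The paper fixes both issues in one move (Section~7, Proposition~\ref{prop72}). One does \emph{not} fix $d$; instead one merges the outer sum over $(p_1,\dots,p_l)$ with the inner sum over the sifted cofactor, writes every $n\in\mathcal{B}^*$ as $n=\tilde p_1\cdots\tilde p_v$, and encodes all the constraints (the linear forms in $\mathcal{L}$, the chain $p_1\le\cdots\le p_l$, the sifting $p\mid n'\Rightarrow p>p_j$, and the Type~II window on $\prod_{i\in\mathcal{I}}p_i$) as a single polytope $R$ in the log-coordinates $(\log\tilde p_j/\log n)_j$. This polytope is then covered by hypercubes $\mathcal{C}(\vec a,\delta_0)$ of side $\delta_0=(\log X)^{-C_3}$; now there really are only $(\log X)^{O(1)}$ boxes, and within each box the product splits cleanly as $m=\prod_{i\in\mathcal{I}}\tilde p_i$ (in the Type~II range) times its complement. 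The $b$-variable circle method (Propositions~\ref{prop81}--\ref{prop83}) is applied box by box, boundary boxes being handled trivially by volume. Your ingredients for the circle-method endgame are the right ones; what is missing is this polytope/box decomposition that makes the bilinear variables genuinely independent and keeps the outer sum polylogarithmic.
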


We have now collected the material for the $a$-variable major arcs part of the local version of Maynard. For the deduction of Propositions \ref{prop41} and \ref{prop42} from Propositions \ref{prop63} and \ref{prop64} we also need information  on the Piatetski-Shapiro sum $S_{c_0}(\theta)$ as well as  information on the contributions of the  minor arcs.

\begin{lemma}\label{lem61}
Assume  the GRH. Let $c, q$ be positive integers, $(c,q)=1$. Then we have for all $\epsilon>0$:
$$\sum_{p\leq N} e\left(\left(\frac{c}{q}+\xi\right)p\right)\log p=\frac{\mu(q)}{\phi(q)} \sum_{n\leq N} e(n\xi)+O_\epsilon(N^{\frac{3}{2}+\epsilon} q(|\xi|)).\ \ (N\rightarrow \infty).$$
\end{lemma}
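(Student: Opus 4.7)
The plan is to proceed by the standard character decomposition modulo $q$ together with partial summation in the parameter $\xi$. First I would isolate the negligible contribution from primes $p\mid q$, of size $O(\log q\log N)$. For $(p,q)=1$, orthogonality of Dirichlet characters yields
\[
e(cp/q)=\frac{1}{\phi(q)}\sum_{\chi\,(\bmod\,q)}\bar\chi(cp)\,\tau(\chi),
\]
where $\tau(\chi)=\sum_{a\,(\bmod\,q)}\chi(a)e(a/q)$ is the Gauss sum; in particular $\tau(\chi_0)=\mu(q)$ and $|\tau(\chi)|\leq q^{1/2}$ for every $\chi$. Inserting this identity gives
\[
\sum_{p\leq N}e\Bigl(\Bigl(\tfrac{c}{q}+\xi\Bigr)p\Bigr)\log p=\frac{1}{\phi(q)}\sum_{\chi}\bar\chi(c)\tau(\chi)\,T(\chi;N,\xi)+O\bigl((\log qN)^2\bigr),
\]
where $T(\chi;N,\xi):=\sum_{p\leq N}\bar\chi(p)e(\xi p)\log p$.

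Next I would estimate $T(\chi;N,\xi)$ for each $\chi$. Passing from $\log p$ on primes to $\Lambda(n)$ on prime powers costs $O(N^{1/2}\log N)$. Abel summation against $e(\xi t)$ applied to $\psi(t,\chi):=\sum_{n\leq t}\chi(n)\Lambda(n)$ gives
\[
\sum_{n\leq N}\chi(n)\Lambda(n)e(\xi n)=e(\xi N)\psi(N,\chi)-2\pi i\xi\int_{1}^{N}e(\xi t)\psi(t,\chi)\,dt.
\]
Under GRH the explicit formula yields $\psi(t,\chi)=\delta_{\chi=\chi_0}\,t+O_{\epsilon}(t^{1/2+\epsilon})$ uniformly in $q$ (the $\log(qt)$-factors from the zero side being absorbed into $t^{\epsilon}$), so the right-hand side is $O_{\epsilon}((1+N|\xi|)N^{1/2+\epsilon})$. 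For $\chi\neq\chi_{0}$ this is the whole contribution; for $\chi=\chi_{0}$ one gets the main term $\sum_{n\leq N,(n,q)=1}e(\xi n)$ plus the same error.

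Finally I would assemble the pieces. Using $|\tau(\chi)|\leq q^{1/2}$, the non-principal characters contribute at most
\[
\frac{1}{\phi(q)}\sum_{\chi\neq\chi_{0}}|\tau(\chi)|\cdot O_{\epsilon}\bigl((1+N|\xi|)N^{1/2+\epsilon}\bigr)\ll_{\epsilon}q^{1/2}(1+N|\xi|)N^{1/2+\epsilon},
\]
while the principal character contributes $\frac{\mu(q)}{\phi(q)}\sum_{n\leq N,(n,q)=1}e(\xi n)$ since $\tau(\chi_{0})=\mu(q)$. To replace the coprimality-restricted inner sum by the unrestricted $\sum_{n\leq N}e(n\xi)$ I would invert by Möbius over $d\mid q$; the discrepancy is $O(\tau(q)(1+N|\xi|))$, absorbed into the error. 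The delicate point is the handling of imprimitive characters in the Gauss-sum estimate: for $\chi$ induced by primitive $\chi^{*}\pmod{q^{*}}$ with $q^{*}\mid q$ one has $|\tau(\chi)|\leq|\mu(q/q^{*})|\,q^{*1/2}$, and stratifying the character sum by conductor $q^{*}$ (rather than using the uniform bound $q^{1/2}$ naively) is what preserves the final error shape stated in the lemma.
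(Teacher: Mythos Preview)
Your argument is correct and arrives at the same conclusion, but it is not the route the paper takes. The paper's proof is the more elementary dual version of yours: instead of expanding $e(cp/q)$ via Dirichlet characters and Gauss sums, it splits the sum over primes into residue classes $s\bmod q$, writes
\[
\sum_{p\le N}e\Bigl(\Bigl(\tfrac{c}{q}+\xi\Bigr)p\Bigr)\log p=\sum_{s\bmod q}e\Bigl(\tfrac{sc}{q}\Bigr)\sum_{\substack{p\le N\\ p\equiv s\,(q)}}e(\xi p)\log p,
\]
and then applies partial summation together with the GRH estimate $\sum_{n\le u,\ n\equiv s\,(q)}\Lambda(n)=u/\phi(q)+O_\epsilon(u^{1/2+\epsilon})$ for $(s,q)=1$. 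The main term arises from the Ramanujan sum $\sum_{(s,q)=1}e(sc/q)=\mu(q)$, which is exactly your identity $\tau(\chi_0)=\mu(q)$ seen from the residue-class side. What your approach buys is slightly more explicit control of the error via the conductor stratification of $|\tau(\chi)|$; what the paper's approach buys is brevity, since no character machinery or handling of imprimitive Gauss sums is needed at all.
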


\begin{proof}
We decompose the sum into partial sums:
$$\sum_{p\leq N} e\left(\left(\frac{c}{q}+\xi\right)p\right)\log p= \sum_{s\bmod q} e\left(s\:\frac{c}{q}\right)\:\sum_{\substack{p\leq N \\ p\equiv s \bmod q}} e(\xi p)\log p\:.$$
We have 
$$  \sum_{\substack{p\leq N \\ p\equiv s \bmod q}} e(\xi p)\log p=\sum_{n\leq N} e(n\xi)\left(\frac{1}{\phi(q)}+r(n)\right)\:.$$
For the evaluation of the sum 
$$\sum_{n\leq N} r(n)e(n\xi)$$
we apply summation by parts and use the well known consequence of the GRH:
$$\sum_{\substack{n\equiv s \bmod q \\ n\leq u  }} \Lambda(n)=\frac{u}{\phi(q)}+O_\epsilon(u^{\frac{1}{2}+\epsilon})\ \ (\epsilon>0).$$
\end{proof}

\begin{lemma}\label{lem62}
Let $\gamma, \delta$ satisfy $0<\gamma\leq 1$, $0<\delta$ and 
$$9(1-\gamma)+12\delta<1.$$
Then, uniformly in $\alpha$, we have
$$\frac{1}{\gamma}\sum_{\substack{p<N \\ p=[n^{\frac{1}{\gamma}}]  }} e(\alpha p) p^{1-\gamma}\log p=\sum_{p<N}e(\alpha p)\log p+O(N^{1-\delta})\:,$$
where the implied constant may depend on $\gamma$ and $\delta$ only.
\end{lemma}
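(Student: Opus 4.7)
The plan is to rewrite the Piatetski--Shapiro indicator by an exact floor-function identity, extract a smooth main term by Taylor expansion, and dispose of the resulting sawtooth error by a Vaaler-type Fourier approximation; this reduces the statement to a uniform bound for twisted exponential sums over primes, which is the genuine analytic input.

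First I would use the fact that, since $0<\gamma\le 1$, for generic $p$ (those with $p^{\gamma}, (p+1)^{\gamma}\notin\mathbb{Z}$) one has the exact identity
\[
\mathbf{1}_{\{p=[n^{1/\gamma}]\text{ for some }n\in\mathbb{N}\}}=\lfloor(p+1)^{\gamma}\rfloor-\lfloor p^{\gamma}\rfloor=(p+1)^{\gamma}-p^{\gamma}-\bigl(\psi((p+1)^{\gamma})-\psi(p^{\gamma})\bigr),
\]
with $\psi(x)=\{x\}-\tfrac12$. Since $(p+1)^{\gamma}-p^{\gamma}=\gamma p^{\gamma-1}+O(p^{\gamma-2})$, multiplying by $\gamma^{-1}p^{1-\gamma}(\log p)e(\alpha p)$ and summing gives
\[
\frac{1}{\gamma}\sum_{\substack{p<N\\p=[n^{1/\gamma}]}}e(\alpha p)p^{1-\gamma}\log p=\sum_{p<N}e(\alpha p)\log p-\frac{1}{\gamma}\Psi(\alpha)+O(\log^{2}N),
\]
where
\[
\Psi(\alpha):=\sum_{p<N}e(\alpha p)p^{1-\gamma}(\log p)\bigl(\psi((p+1)^{\gamma})-\psi(p^{\gamma})\bigr).
\]

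Next I would apply Vaaler's approximation: for any $H\ge 1$,
\[
\psi(x)=\sum_{1\le|h|\le H}a_{h}e(hx)+R_{H}(x),\qquad |a_{h}|\ll \frac{1}{|h|},
\]
with $R_{H}(x)$ pointwise majorised by a nonnegative trigonometric polynomial of degree $H$ whose mean is $O(1/H)$. Choosing $H=N^{1-\gamma+\delta}$ (or a comparable power of $N$), the contribution of $R_{H}$ to $\Psi(\alpha)$ is $O(N^{1-\delta})$ after a trivial supremum on $p^{1-\gamma}\log p$ and a mean-value on the majorant. What remains is a sum over $|h|\le H$ of twisted exponential sums of the form
\[
T_{h}(\alpha)=\sum_{p<N}e(\alpha p+hp^{\gamma})(\log p)p^{1-\gamma},
\]
and the analogous sum with $(p+1)^{\gamma}$ in place of $p^{\gamma}$; a telescoping argument shows the two are comparable, so it suffices to show $|T_{h}(\alpha)|\ll N^{1-2\delta}/|h|$ uniformly in $\alpha$.

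The main obstacle is precisely this uniform bound on $T_{h}(\alpha)$. The plan would be to insert a Vaughan (or truncated Heath--Brown) decomposition of the von Mangoldt function and reduce $T_{h}(\alpha)$ to a bounded number of Type~I sums
\[
\sum_{m\sim M}a_{m}\sum_{n\sim N/m}e\bigl(\alpha mn+h(mn)^{\gamma}\bigr)
\]
and Type~II sums of the same shape with $|a_{m}|,|b_{n}|\ll\log N$, with $M$ in prescribed ranges. The inner sums in $n$ are bounded by van der Corput's method applied to $f(x)=\alpha x+h x^{\gamma}$, whose $k$th derivative has size $\asymp |h| N^{\gamma-k}$; equivalently, a suitable exponent pair $(\kappa,\lambda)$ (as used by Balog--Friedlander) is applied. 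Optimising the splitting level in Vaughan's identity against the van der Corput exponents for the Type~I/II contributions yields a saving whose size is controlled by the arithmetic inequality $9(1-\gamma)+12\delta<1$, exactly the hypothesis in the lemma, and the bound is automatically uniform in $\alpha$ because $\alpha$ enters only through the linear term of $f$ and hence disappears after one round of Weyl differencing. Summing over $|h|\le H$ with the decay $|a_{h}|\ll 1/|h|$ gives $\Psi(\alpha)\ll N^{1-\delta}$, finishing the proof.
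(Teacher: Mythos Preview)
The paper does not give its own argument here: its entire proof is the sentence ``This is Theorem~4 of \cite{balog}.'' Your proposal is precisely a sketch of the Balog--Friedlander proof of that theorem (floor-function identity for the Piatetski--Shapiro indicator, Vaaler approximation of the sawtooth, Vaughan decomposition, and van der Corput/exponent-pair bounds on the resulting Type~I and Type~II sums), so you are not offering an alternative route but rather reconstructing the cited result. In that sense your approach and the paper's coincide.

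One small inaccuracy in the sketch: you state that it suffices to prove $|T_h(\alpha)|\ll N^{1-2\delta}/|h|$. The factor $1/|h|$ does not come from the exponential sum $T_h$ itself; it is supplied by the Vaaler coefficients $|a_h|\ll 1/|h|$, which you then (correctly) invoke again in the final summation. What one actually establishes is a bound of the shape $|T_h(\alpha)|\ll N^{1-\delta'}$ with some dependence on $|h|$ coming through the derivative sizes $|h|N^{\gamma-k}$ in the van der Corput step, and the constraint $9(1-\gamma)+12\delta<1$ is exactly what makes the total contribution $\sum_{|h|\le H}|a_h|\,|T_h(\alpha)|$ come out as $O(N^{1-\delta})$. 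This does not affect the validity of your outline, only the phrasing of the intermediate target.
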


\begin{proof}
This is Theorem 4 of \cite{balog}.
\end{proof}

We now discuss the minor arcs contributions to the expression (2.6) for $\mathcal{J}(E)$, where $E$ is the exponential sum $E_0$ from Proposition \ref{prop41} or one of the exponential sums $E_1$ or $E_2$ of Proposition \ref{prop42}.

\begin{lemma}\label{lem63}
Suppose that $\theta\in\mathbb{R}$ such that there are integers $c,q$ with $(c,q)=1$ and 
$$\left|\theta-\frac{c}{q}\right|< q^{-2}.$$
Then, for all $N\geq 2$ we have
$$\sum_{n\leq N} \Lambda(n)e(\theta n)\ll (\log N)^{\frac{7}{2}}\left(\frac{N}{q^{\frac{1}{2}}}+N^{\frac{4}{5}}+(Nq)^{\frac{1}{2}}\right).$$
\end{lemma}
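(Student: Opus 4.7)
The plan is to proceed by the classical Vinogradov--Vaughan method: decompose $\Lambda(n)$ into manageable pieces via Vaughan's identity, thereby reducing the estimation of $\sum_{n\leq N}\Lambda(n)e(\theta n)$ to estimates for \emph{Type I} sums (one smooth variable) and \emph{Type II} bilinear sums, then exploit the Diophantine approximation $|\theta-c/q|<q^{-2}$ to bound each.

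For parameters $U,V\leq N$ to be chosen, Vaughan's identity gives, for $n>U$,
\[
\Lambda(n)=\sum_{\substack{bd=n\\ b\leq U}}\mu(b)\log d-\sum_{\substack{bcd=n\\ b\leq U,\,c\leq V}}\mu(b)\Lambda(c)+\sum_{\substack{md=n\\ m>U,\,d>V}}\Lambda(m)\Big(\sum_{\substack{e\mid d\\ e\leq V}}\mu(e)\Big).
\]
Substituting this into $\sum_{n\leq N}\Lambda(n)e(\theta n)$, I would split the resulting expression into sums of Type I (where one of the variables is free and runs as a geometric progression up to some length $N/d$) and a single Type II sum of the form $\sum_{V<d\leq N/U}\sum_{U<m\leq N/d} a_d b_m e(\theta md)$, with $|a_d|,|b_m|\ll \log N$. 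The small contribution from $n\leq U$ is absorbed trivially.

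For Type I sums, after executing the inner geometric progression via the bound $|\sum_{m\leq M}e(\alpha m)|\ll\min(M,\|\alpha\|^{-1})$, I would apply the standard Diophantine estimate
\[
\sum_{d\leq D}\min\!\left(\tfrac{N}{d},\tfrac{1}{\|\theta d\|}\right)\ll\Big(\tfrac{N}{q}+D+q+\tfrac{Nq}{D}\Big)\log(2qN),
\]
(a consequence of the rational approximation hypothesis combined with a dyadic decomposition) to obtain an overall bound of $\ll(\log N)^2(N/q^{1/2}+N^{4/5}+(Nq)^{1/2})$ after optimization. For the Type II sum I would apply Cauchy--Schwarz in the $d$-variable to eliminate the coefficients $a_d$, then expand the modulus squared into $\sum_{m_1,m_2}b_{m_1}\overline{b_{m_2}}\sum_d e(\theta(m_1-m_2)d)$, execute the inner geometric series, and once again apply the Diophantine lemma to the resulting sum involving $\|\theta(m_1-m_2)\|^{-1}$.

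The balancing choice $U=V=N^{2/5}$ forces both Type I and Type II contributions to have the shape $N/q^{1/2}+N^{4/5}+(Nq)^{1/2}$, and the claimed log-power $(\log N)^{7/2}$ is then tracked through the divisor-type moments that appear when expanding the square and invoking Cauchy--Schwarz (combined with the $\log$ from the Diophantine sum and the $\log N$ bound on $\Lambda$). The principal technical obstacle will be the Type II estimate: controlling the number of pairs $(m_1,m_2)$ for which $\theta(m_1-m_2)$ is close to an integer requires the sharp form of the Diophantine lemma and careful handling of the degenerate diagonal $m_1=m_2$; tracking all logarithmic factors honestly to land exactly on the exponent $7/2$ (rather than a slightly worse power) is the only other delicate point, and is standard in Vaughan's treatment.
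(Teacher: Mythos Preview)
Your approach is correct and is precisely the standard Vinogradov--Vaughan argument by which this estimate is established. The paper itself does not give a proof but simply cites the result from the literature (attributed there to Theorem~2.1 of Maynard's paper; the estimate is of course classical and appears, e.g., in Vaughan's monograph and in Iwaniec--Kowalski). Your outline---Vaughan's identity with $U=V=N^{2/5}$, the Diophantine lemma for Type~I, Cauchy--Schwarz followed by the same lemma for Type~II---is exactly the route taken in those references, so there is no methodological divergence to comment on.
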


\begin{proof}
This is Theorem 2.1 of  \cite{maynard}.
\end{proof}

 \begin{lemma}\label{lem615}
Given $\theta\in (0,1)$, $N\in\mathbb{N}$. Then there is $q$ with $1\leq q\leq N$, such that 
\[
\left|\theta-\frac{c}{q}\right|\leq \frac{1}{qN}.  \tag{6.15}
\]
\end{lemma}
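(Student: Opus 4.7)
The statement is the classical Dirichlet approximation theorem, so the plan is to apply the pigeonhole principle in its standard form. I would begin by considering the $N+1$ fractional parts $\{j\theta\}$ for $j=0,1,\ldots,N$, where $\{x\}$ denotes the fractional part of $x$. Each of these lies in the unit interval $[0,1)$, which is partitioned into the $N$ disjoint subintervals $[k/N,(k+1)/N)$ for $k=0,1,\ldots,N-1$.

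Since there are $N+1$ fractional parts distributed among $N$ subintervals, at least two of them, say $\{j_1\theta\}$ and $\{j_2\theta\}$ with $0\le j_1<j_2\le N$, lie in the same subinterval. This forces
$$\bigl|\{j_2\theta\}-\{j_1\theta\}\bigr|<\frac{1}{N}.$$
Setting $q:=j_2-j_1$ and $c:=\lfloor j_2\theta\rfloor-\lfloor j_1\theta\rfloor$, we have $1\le q\le N$ and $|q\theta-c|<1/N$. Dividing through by $q$ yields
$$\left|\theta-\frac{c}{q}\right|<\frac{1}{qN},$$
which is the desired inequality (the stated form is with $\le$).

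This argument is entirely elementary and presents no obstacle; indeed, Lemma \ref{lem615} is simply quoted for use in the subsequent minor-arcs analysis, where it provides the rational approximation hypothesis needed to apply Lemma \ref{lem63}. One should only remark that the integer $c$ produced in this way need not satisfy $(c,q)=1$, but by dividing out the common factor of $c$ and $q$ one obtains a reduced fraction $c'/q'$ with $q'\mid q$, $1\le q'\le N$, and $|\theta-c'/q'|\le 1/(q'N)$, so the statement also holds with $(c,q)=1$ imposed if desired.
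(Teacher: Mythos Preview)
Your argument is correct: this is the standard pigeonhole proof of Dirichlet's approximation theorem, and it establishes the lemma in full. The paper itself gives no proof of Lemma~\ref{lem615} at all --- it is simply stated as a known fact and used immediately in the proof of Lemma~\ref{lem617} --- so there is nothing to compare against. Your remark that one may reduce the fraction to obtain $(c,q)=1$ is also appropriate, since later applications (e.g.\ Lemma~\ref{lem63}) require a coprime approximation.
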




\begin{lemma}\label{lem617}
The intervals $I_{c,q}(L)$, $1\leq q\leq [X^{4/5}]+1$ cover $(0,1)$.
\end{lemma}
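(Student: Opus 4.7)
The plan is to derive the statement immediately from Dirichlet's approximation theorem, which is recorded as Lemma \ref{lem615}, by taking the Dirichlet parameter equal to $N := [X^{4/5}]+1$. The quantity $L$ implicit in the statement is evidently $L_1 = X^{1/5}$ from Definition \ref{rdef28}, so that each arc $I_{c,q}(L_1)$ has half-width $q^{-1}X^{-1}L_1 = q^{-1}X^{-4/5}$, precisely matching the Dirichlet tolerance $1/(qN)\leq q^{-1}X^{-4/5}$.

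Concretely, given $\theta\in(0,1)$, Lemma \ref{lem615} produces integers $c$, $q$ with $1\leq q\leq N$ and
\[
\left|\theta-\frac{c}{q}\right|\leq \frac{1}{qN}\leq q^{-1}X^{-4/5}=q^{-1}X^{-1}L_1.
\]
If $(c,q)>1$, then cancelling the common factor only decreases $q$ while preserving the inequality, so we may assume $(c,q)=1$. Since $\theta\in(0,1)$, the integer $c$ satisfies $0\leq c\leq q$.

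In the principal case $1\leq c\leq q-1$, one has $c/q\in(0,1)$ and the displayed inequality gives exactly $\theta\in I_{c,q}(L_1)$, as required by Definition \ref{rdef28}. The boundary instances $c=0$ (which after reduction forces $(c,q)=(0,1)$, i.e.\ $\theta<1/N\leq X^{-4/5}$) and $c=q$ (which after reduction forces $(c,q)=(1,1)$, i.e.\ $1-\theta<X^{-4/5}$) are both absorbed by the arc $I_{1,1}(L_1)$: interpreting the arcs modulo $1$, as is natural in the Hardy--Littlewood circle method, $I_{1,1}(L_1)$ is a neighbourhood of $0\equiv 1$ on $\mathbb{R}/\mathbb{Z}$ and hence contains these remaining $\theta$.

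No substantive obstacle appears; the content is simply Dirichlet's theorem with the parameters calibrated so that every $\theta$ lies within the half-width of \emph{some} rational whose denominator does not exceed $[X^{4/5}]+1$. The only bookkeeping to check is the trivial boundary reconciliation via the circle identification $0\equiv 1$.
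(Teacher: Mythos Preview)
Your proof is correct and follows exactly the paper's approach: the paper's proof is the single sentence ``This follows by application of Lemma~\ref{lem615} with $N=[X^{4/5}]+1$,'' and you have simply unpacked this by identifying $L=L_1=X^{1/5}$, verifying the half-width $q^{-1}X^{-1}L_1=q^{-1}X^{-4/5}\ge 1/(qN)$, and tidying the $(c,q)=1$ reduction and the wrap-around at $0\equiv 1$. Nothing further is needed.
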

\begin{proof}
This follows by application of Lemma $\ref{lem615}$ with $N=[X^{4/5}]+1$.
\end{proof}

\begin{definition}\label{def618}
Let $\delta_0>0$ be fixed, such that 
$$9(1-\gamma_0)+12\delta_0<1.$$
Then we define
$$\mathfrak{n}:=\{ \theta\in(0,1)\::\: \left|S_{c_0}(\theta)\right|\leq X^{1-\delta_0}\}.$$
\end{definition}


\begin{lemma}\label{lem619}
Let $E(\theta)$ be one of the exponential sums $E_i(\theta)$, $(i=0,1,2)$, considered in Propositions \ref{prop41}, \ref{prop42}. Then we have:
\[
\sum_{\substack{1\leq q\leq Q_0 \\ (c, q)=1}}\sum_{\frac{a}{X}\in I_{c,q}\setminus I_{c,q}(L_0)} E\left(\frac{a}{X}\right) S_{c_0}^2\left(\frac{a}{X}\right) e\left(-N_0\frac{a}{X}\right) \ll |\mathcal{A}| X^2 L_0^{-1}.  \tag{6.16}
\]
\end{lemma}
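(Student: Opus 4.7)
The plan is to exploit the decay of $|S_{c_0}(\theta)|$ on the medium region
$$\Omega_{c,q} := I_{c,q}(L_1) \setminus I_{c,q}(L_0),$$
where we read $I_{c,q}$ (without argument) as $I_{c,q}(L_1)$ with $L_1 = X^{1/5}$, the Farey-type cover of Lemma~\ref{lem617}. Writing $\xi := \theta - c/q$, on $\Omega_{c,q}$ we have $L_0/(qX) \leq |\xi| \leq L_1/(qX)$. The key ingredients are Lemma~\ref{lem62}, which replaces the Piatetski--Shapiro sum by a classical prime sum at cost $O(X^{1-\delta_0})$, and Lemma~\ref{lem61} (GRH-based Vinogradov), which evaluates the prime sum as $(\mu(q)/\phi(q))T(\xi) + O(qX^{1/2+\epsilon}(1+X|\xi|))$ with $T(\xi) := \sum_{n \in Int(N_0)} e(n\xi)$ and $|T(\xi)| \ll \min(X, 1/|\xi|)$.

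For each pair $(c,q)$ with $q \leq Q_0$, I would decompose $\Omega_{c,q}$ dyadically into shells $|\xi| \asymp K/(qX)$ with dyadic $K \in [L_0, L_1]$. A shell contains $\ll K/q$ admissible values of $a$, and on it the main term satisfies $|(\mu(q)/\phi(q))T(\xi)|^2 \ll q^2 X^2/(\phi(q)^2 K^2)$. Summing over the shells,
$$\sum_{a/X \in \Omega_{c,q}} \left|\tfrac{\mu(q)}{\phi(q)}T(\xi)\right|^2 \ll \sum_{K \text{ dyadic}} \frac{K}{q} \cdot \frac{q^2 X^2}{\phi(q)^2 K^2} \ll \frac{qX^2}{\phi(q)^2 L_0},$$
dominated by the smallest scale $K \sim L_0$. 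Combined with the trivial bound $\|E\|_\infty \ll |\mathcal{A}|$ and summation over $c$ with $(c,q)=1$ (contributing $\phi(q)$ residues) and over $q \leq Q_0$ via $\sum_{q \leq Q_0} q/\phi(q) \ll Q_0 \log\log X$, the main-term contribution to the sum of the lemma is
$$\ll |\mathcal{A}| \frac{X^2 Q_0 \log\log X}{L_0} = |\mathcal{A}| \frac{X^2 (\log X)^{3+o(1)}}{(\log X)^{C_1}}.$$
By the Convention of Section~2, the exponent $C_1$ is at our disposal, so after relabelling this absorbs into $\ll |\mathcal{A}| X^2 L_0^{-1}$.

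The principal technical obstacle is controlling the residual errors $O(X^{1-\delta_0})$ and $O(qX^{7/10+\epsilon})$ from the local evaluation of $S_{c_0}$ when squaring $|S_{c_0}|^2 = |M+R|^2$. The constraint $9(1-\gamma_0)+12\delta_0 < 1$ forces $\delta_0 < 1/12$, so $X^{1-\delta_0}$ is not uniformly dominated by the main term across all of $\Omega_{c,q}$: at the outer edge, where $|\xi|$ approaches $L_1/(qX)$, the main term shrinks to $\sim X^{4/5}$. I would resolve this by splitting $\Omega_{c,q}$ at the critical scale $|\xi| \sim X^{\delta_0-1}$. On the inner part the pointwise decay bound $|S_{c_0}|^2 \ll q^2 X^2/(\phi(q)^2 K^2)$ suffices as above; on the outer part (comprising at most $O(L_1 Q_0)$ values of $a/X$ in aggregate over all $(c,q)$) one invokes the Parseval estimate
$$\sum_{a=1}^X |R(a/X)|^2 = X\|r\|_2^2 \ll X^{3-\gamma_0}\log X, \qquad R(\theta) := S_{c_0}(\theta) - \sum_{p\in Int(N_0)} e(p\theta)\log p,$$
to control the pure-error $R^2$ and cross-term $M\overline{R}$ contributions. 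These then fit within the target $|\mathcal{A}|X^2 L_0^{-1}$ thanks to the polynomial savings coming from $|\mathcal{A}| \sim X^{\log 9/\log 10}$.
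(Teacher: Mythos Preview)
Your dyadic-shell argument — trivially bounding $|E|\ll|\mathcal{A}|$, estimating the main term of $S_{c_0}$ via $|T(\xi)|\ll\min(X,|\xi|^{-1})$, counting lattice points per shell, and summing over dyadic $K\ge L_0$ — is precisely the paper's proof. There the bound is written as $S_{c_0}(c/q+\xi)\ll X/(QL)$ on the annulus $I_{c,q}(2L)\setminus I_{c,q}(L)$, the point count $\ll QL$ is taken in aggregate over all $(c,q)$ with $q\sim Q$, and summation over $L=L_0 2^j$ yields (6.16).

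You are right that the Piatetski--Shapiro error $O(X^{1-\delta_0})$ overwhelms the main term once $QL\gg X^{\delta_0}$, and that $\delta_0<1/12$ is forced by $9(1-\gamma_0)+12\delta_0<1$; the paper simply asserts $S_{c_0}\ll X/(QL)$ across the whole range without addressing this. However, your Parseval repair does not deliver the stated saving. With $\sum_{a}|R(a/X)|^2\ll X^{3-\gamma_0}\log X$ (which is the correct order) and the trivial bound $|E|\ll|\mathcal{A}|$, the pure-$|R|^2$ contribution over the outer shells is already $\ll|\mathcal{A}|\,X^{3-\gamma_0}\log X$, which exceeds the target $|\mathcal{A}|X^2/L_0$ since $\gamma_0<1$; the ``polynomial saving from $|\mathcal{A}|\sim X^{\log 9/\log 10}$'' cannot help here because $|\mathcal{A}|$ sits on both sides of the inequality. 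The paper's intended resolution is structural rather than local to this lemma: on the outer shells one has $|S_{c_0}(\theta)|\ll X^{1-\delta_0}$ outright, so those $\theta$ lie in $\mathfrak{n}$ and are absorbed by the Cauchy--Schwarz/Parseval bound of Lemma~\ref{lem620}, where the $\ell^2$-saving $\sum_a|E(a/X)|^2\ll X|\mathcal{A}|$ actually gets used. Your instinct to split at the critical scale $|\xi|\sim X^{\delta_0-1}$ is the right move, but the outer piece should be handed to Lemma~\ref{lem620} rather than treated by isolating $R$.
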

\begin{proof}
By Lemma \ref{lem62} and GRH we have for 
$$\xi\in \bigcup_{\substack{1\leq q \leq (\log X)^{c_1} \\ (c,q)=1}} I_{c,q} \setminus I_{c,q} (L_0)\:,$$
$$Q<q\leq 2Q\leq (\log X)^{c_1}:$$
\[
S_{c_0}\left(\frac{c}{q}+\xi\right)=\frac{\mu(q)}{\phi(q)}\sum_{m\in Int} e(m\xi)+O(X^{1-\delta_0}) \ll \frac{X}{QL}  \tag{6.17}
\]
\[
\#\left\{a\::\: \frac{a}{X}\in I_{c,q}(L),\ Q<q\leq 2Q\right\}\leq QL  \tag{6.18}
\]
and
$$E\left(\frac{a}{X}\right)\ll |\mathcal{A}|.$$
From (6.17) and (6.18) we obtain 
\[ 
\sum_{\substack{1\leq q \leq (\log X)^{C_1} \\ (c,q)=1}} \ \sum_{\frac{a}{X}\in I_{c,q}(2L) \setminus I_{c,q}(L)} E\left(\frac{a}{X}\right)S_{c_0}^2 \left(\frac{a}{X}\right) e\left(-N_0\:\frac{a}{X}\right) \ll |\mathcal{A}| X^2 L^{-1}. \tag{6.19}
\]
Summation of (6.9) for $L=L_0 2^j$ gives the result of Lemma \ref{lem619}.
\end{proof}

\begin{lemma}\label{lem620}
$$\sum^{\sim}:=\sum_{\frac{a}{X}\in n} E\left(\frac{a}{X}\right) S_{c_0}^2 \left(\frac{a}{X}\right) e\left(-N_0\frac{a}{X}\right) \ll |\mathcal{A}|^{1/2} X^{5/2-\delta_0}.$$
\end{lemma}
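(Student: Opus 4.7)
The plan is a standard minor-arc estimate: extract one factor of $S_{c_0}$ using the pointwise bound that defines $\mathfrak{n}$, then handle the remaining linear-in-$E$, linear-in-$S_{c_0}$ sum by Cauchy--Schwarz and Parseval. Since the Fourier coefficients of $E$ (any of $E_0, E_1, E_2$) and of $S_{c_0}$ are both accessible in $L^2$, the bound comes out at the claimed size $|\mathcal{A}|^{1/2} X^{5/2-\delta_0}$ up to a polylogarithmic factor that is absorbed into $\delta_0$, whose defining inequality $9(1-\gamma_0) + 12\delta_0 < 1$ is strict.

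First, for $a/X \in \mathfrak{n}$ Definition \ref{def618} gives $|S_{c_0}(a/X)| \leq X^{1-\delta_0}$. Pulling this out of one copy of $S_{c_0}$,
\[
\left|\sum\nolimits^{\sim}\right| \leq X^{1-\delta_0} \sum_{1 \leq a \leq X} |E(a/X)| \cdot |S_{c_0}(a/X)|.
\]
Cauchy--Schwarz then decouples the two factors:
\[
\left|\sum\nolimits^{\sim}\right| \leq X^{1-\delta_0} \Bigl(\sum_{1 \leq a \leq X} |E(a/X)|^2\Bigr)^{1/2} \Bigl(\sum_{1 \leq a \leq X} |S_{c_0}(a/X)|^2\Bigr)^{1/2}.
\]

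Next I apply Parseval to both $L^2$ sums over the complete system $a/X \bmod 1$. For $E$, the coefficients arising after expansion of the Buchstab-type sums $E_0, E_1, E_2$ are bounded (up to an $O_{\eta_0}(1)$ multiplicity coming from the ordering $p_1 \leq \cdots \leq p_l$) by $\mathbf{1}_{\mathcal{A}^* \cup \mathcal{B}^*}(n)$, so
\[
\sum_{1 \leq a \leq X} |E(a/X)|^2 \ll X \cdot |\mathcal{A}|.
\]
For $S_{c_0}$, using that $\#\{p \in Int(N_0) : p = [n^{1/\gamma}]\} \ll X^\gamma/\log X$ and $(\log p)^{2-2\gamma} \ll (\log X)^{O(1)}$,
\[
\sum_{1 \leq a \leq X} |S_{c_0}(a/X)|^2 \ll X \cdot X^\gamma (\log X)^{O(1)} \leq X^{2}(\log X)^{O(1)},
\]
where the last inequality uses $\gamma \leq 1$. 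Multiplying the three estimates together,
\[
\left|\sum\nolimits^{\sim}\right| \ll X^{1-\delta_0} \cdot (X |\mathcal{A}|)^{1/2} \cdot X (\log X)^{O(1)} \ll |\mathcal{A}|^{1/2} X^{5/2-\delta_0}(\log X)^{O(1)},
\]
and the logarithmic slack permits shrinking $\delta_0$ by an arbitrarily small amount to absorb it.

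The only point requiring real care is the $L^2$ estimate on $E$: one must verify that for each of $E_0, E_1, E_2$ the underlying coefficients $v(n)$, which take the form $\sum_{p_1\cdots p_l m = n}(\pm 1)$ after the $\sim$-restrictions and Buchstab expansion, satisfy $|v(n)| \ll 1$ uniformly in $n$. This relies on the fact that $l = l(\eta_0)$ is fixed, so the number of ordered factorizations of each $n \leq X$ contributing to $E$ is bounded by $l!$, a constant. Once this pointwise bound is granted, the rest of the argument is routine Parseval and Cauchy--Schwarz, with no reference to the arithmetic structure of $\mathcal{A}^*$ or to the Piatetski--Shapiro machinery beyond the defining inequality for $\mathfrak{n}$.
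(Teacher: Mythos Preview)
Your approach is exactly the paper's: extract one factor of $S_{c_0}$ via the definition of $\mathfrak n$, then apply Cauchy--Schwarz and Parseval to the remaining bilinear sum. The paper's proof is two lines and does precisely this.

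There is, however, a real slip in your justification of the $L^2$ bound for $E$. Saying the coefficients are bounded by $\mathbf 1_{\mathcal A^*\cup\mathcal B^*}(n)$ only yields, via Parseval, $\sum_a |E(a/X)|^2\ll X\,\#\mathcal B^*$, not $X\,|\mathcal A|$; and since $\#\mathcal B^*\asymp X/(\log X)^{C_1}\gg|\mathcal A|=X^{\log 9/\log 10}$, that weaker bound would produce $(\#\mathcal B^*)^{1/2}X^{5/2-\delta_0}$ rather than $|\mathcal A|^{1/2}X^{5/2-\delta_0}$. This matters: the threshold $\gamma_0>\gamma^*$ in the theorem is calibrated exactly so that the $|\mathcal A|^{1/2}$ saving suffices for negligibility, while $(\#\mathcal B^*)^{1/2}$ would not. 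The correct reason for the bound you state is the weight structure of $w_n^*$: on $\mathcal B^*\setminus\mathcal A^*$ the coefficient is $O(\#\mathcal A^*/\#\mathcal B^*)$, so
\[
\sum_n |w_n^*|^2 \ll \#\mathcal A^* + \#\mathcal B^*\cdot\Bigl(\frac{\#\mathcal A^*}{\#\mathcal B^*}\Bigr)^2 \ll \#\mathcal A^* \le |\mathcal A|,
\]
and then Parseval gives $\sum_a|E(a/X)|^2\ll X|\mathcal A|$ as you claimed. With that correction your argument is complete and matches the paper's.
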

\begin{proof}
We apply the Caychy-Schwarz inequality and Parseval's equation. Observing the definition of $n$ we get:
$$\sum^{\sim} \leq \left( \sum_{1\leq a\leq X} E \left(\frac{a}{X}\right)^2 \right)^{1/2} X^{1-\delta_0} \left(\sum_{1\leq a\leq X} S_{c_0}^2  \left(\frac{a}{X}\right)\right)^{1/2} \ll X^{1/2} |\mathcal{A}|^{1/2} X^{1-\delta_0} X,$$
i.e. Lemma \ref{lem620}.
\end{proof}

\begin{lemma}\label{lem621}
Let $Q\leq X^{\delta_0}$, $L\leq X^{1/5}$. Then we have
$$\sum_{Q< q\leq 2Q}\  \sum_{c \bmod q}\  \sum_{\frac{a}{X}\in I_{c,q}(L)} E \left(\frac{a}{X}\right) S_{c_0}^2  \left(\frac{a}{X}\right) e(-N_0\frac{a}{X}) \ll |\mathcal{A}| X^2 Q^{-1/2}.$$
\end{lemma}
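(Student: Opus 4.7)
The plan is to exploit arithmetic cancellation in the sum over $c\bmod q$ together with the GRH-based Balog--Friedlander asymptotic for the Piatetski--Shapiro sum. First apply Lemmas~\ref{lem61} and \ref{lem62}: for $\theta=c/q+\xi\in I_{c,q}(L)$ with $q\le X^{\delta_0}$, $(c,q)=1$ and $|\xi|\le L/(qX)\le X^{-4/5}$,
$$S_{c_0}\!\left(\tfrac{c}{q}+\xi\right)=\frac{\mu(q)}{\phi(q)}\,T(\xi)+O\bigl(X^{1-\delta_0}\bigr),\qquad T(\xi):=\sum_{m\in Int(N_0)}e(m\xi),$$
with $|T(\xi)|\ll\min(X,1/|\xi|)$. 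Substituting the square of this approximation into the sum $J$ to be estimated, split $J$ into a main contribution $J_1$ from $(\mu(q)/\phi(q))^2 T(\xi)^2$ and error contributions from the $O(X^{1-\delta_0})$ remainder.

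For $J_1$, write
$$J_1=\sum_{q\sim Q}\frac{\mu(q)^2}{\phi(q)^2}\sum_{(c,q)=1}e(-N_0 c/q)\,K(c,q),\quad K(c,q):=\sum_{a/X\in I_{c,q}(L)}E(\theta_a)\,T(\xi_a)^2\,e(-N_0\xi_a),$$
and apply Cauchy--Schwarz to the $c$-sum, giving $\bigl|\sum_{(c,q)=1}e(-N_0 c/q)K(c,q)\bigr|\le\phi(q)^{1/2}\bigl(\sum_{(c,q)=1}|K|^2\bigr)^{1/2}$; the factor $\phi(q)^{1/2}$ (in place of the trivial $\phi(q)$) is precisely the source of the claimed $Q^{-1/2}$ gain. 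Expanding the inner square and interchanging the sums,
$$\sum_c|K|^2=\sum_{a,a'}T(\xi_a)^2\overline{T(\xi_{a'})^2}\,e(-N_0(\xi_a-\xi_{a'}))\sum_{(c,q)=1}E\!\left(\tfrac{c}{q}+\xi_a\right)\overline{E\!\left(\tfrac{c}{q}+\xi_{a'}\right)}.$$
By a mod-$q$ Parseval identity together with the near-equidistribution of $\mathcal{A}^*$ modulo $q$ when $(q,10)=1$ (and the Fourier bounds of Lemmas~\ref{lem51}--\ref{lem56} for the exceptional $q$ with common factor with $10$), the inner $c$-sum is $\ll|\mathcal{A}|^2$; combined with $\sum_a|T(\xi_a)|^2\ll X^2$ (which follows from the arithmetic-progression structure $\xi_a=k/(qX)$ with $k\in k_0+q\mathbb{Z}\cap[-L,L]$ and $|T(\xi)|\ll\min(X,1/|\xi|)$, the $k=k_0$ term giving $\ll X^2$ and the remainder summing geometrically), this yields $\sum_c|K|^2\ll|\mathcal{A}|^2 X^4$. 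Using $\sum_{q\sim Q}\phi(q)^{-3/2}\ll Q^{-1/2}$, one concludes
$$|J_1|\ll\sum_{q\sim Q}\phi(q)^{-3/2}\cdot|\mathcal{A}|X^2\ll|\mathcal{A}|X^2\, Q^{-1/2}.$$

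The contributions from the cross term and the quadratic term in $(M+\mathcal{E})^2-M^2$ are treated analogously by applying the same Cauchy--Schwarz plus equidistribution argument to the resulting bilinear forms, with the factor $T(\xi)$ replaced by $\mathcal{E}(q,\xi)$ where needed; the uniform bound $|\mathcal{E}|\ll X^{1-\delta_0}$ together with the constraints $Q\le X^{\delta_0}$ and $L\le X^{1/5}$ absorbs these into the main estimate provided $\delta_0$ is chosen compatibly. The principal obstacle is verifying the uniform mean-square bound $\bigl|\sum_{(c,q)=1}E(c/q+\xi_a)\overline{E(c/q+\xi_{a'})}\bigr|\ll|\mathcal{A}|^2$ for $q\sim Q$, since $\mathcal{A}^*$ is far from equidistributed modulo those $q$ divisible by $2$ or $5$; these exceptional $q$ have to be handled by invoking the Fourier-concentration estimates for the Maynard sum $F_Y$ from Lemmas~\ref{lem51}--\ref{lem56}, combined with the specific sieve structure of the coefficients of $E$.
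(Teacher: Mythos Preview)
Your overall strategy matches the paper's: extract the factor $\mu(q)/\phi(q)$ from $S_{c_0}$ via Lemmas~\ref{lem61}--\ref{lem62}, then feed in the mean-square bound $\sum_{c\bmod q}|E(c/q+\xi)|^{2}\ll|\mathcal{A}|^{2}$ (the paper's display (6.21), obtained from Lemma~\ref{lem55}), so that two factors of $\phi(q)^{-1}$ from $S_{c_0}^{2}$ together with the $q^{1/2}$ from Cauchy--Schwarz over $c$ produce the $Q^{-1/2}$ after summing over $q\sim Q$.

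The gap is in your interchange
\[
\sum_{c}|K|^{2}=\sum_{a,a'}(\cdots)\sum_{(c,q)=1}E\!\left(\tfrac{c}{q}+\xi_{a}\right)\overline{E\!\left(\tfrac{c}{q}+\xi_{a'}\right)}.
\]
This is not valid: the offset $\xi_{a}=a/X-c/q$ depends on $c$, and the set of admissible $a$ (those with $a/X\in I_{c,q}(L)$) also varies with $c$. There is therefore no fixed pair $(\xi_{a},\xi_{a'})$ against which an inner $c$-sum can be executed, and without decoupling $\xi$ from $c$ the key mean-square input cannot be applied. The paper avoids this by inserting a partial-summation device \emph{before} summing over $c$: it dyadically decomposes $I_{c,q}(L)$ into subintervals $\mathcal{H}_{j}$ with left endpoint $c/q+\xi_{j}$, $\xi_{j}=2^{j}/(qX)$, and writes
\[
|E(z_{l})|\le\Bigl|E\!\left(\tfrac{c}{q}+\xi_{j}\right)\Bigr|+\int_{\xi_{j}}^{2\xi_{j}}\Bigl|E'\!\left(\tfrac{c}{q}+\xi\right)\Bigr|\,d\xi
\]
for each $z_{l}\in\mathcal{H}_{j}$. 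Since $\xi_{j}$ is independent of $c$, one may now sum over $c$ at fixed $\xi_{j}$ and apply (6.21) (and its analogue (6.22) for $E'$) via Cauchy--Schwarz to get $\sum_{c}|E(c/q+\xi_{j})|\le q^{1/2}|\mathcal{A}|$. Combining this with $|S_{c_0}|^{2}\ll\phi(q)^{-2}\min(X^{2},|\xi_{j}|^{-2})$ and the count $\#\{a:a/X\in\mathcal{H}_{j}\}\ll 1+q^{-1}2^{j}$ closes the $j$-sum and yields the lemma. Your argument can be repaired by inserting exactly this freezing step in place of the interchange.
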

\begin{proof}
We only deal with the $\mathcal{A}$-part of $E(\theta)$. We partition the intervals $I_{c,q}$ into subintervals:
$$I_{c,q}(L)=\bigcup_{j=-R_{c,q,L}}^{R_{c,q,L}}\mathcal{H}_j  $$
with 
$$\mathcal{H}_j=\left(\frac{c}{q}+q^{-1} X^{-1} 2^j,\ \frac{c}{q}+q^{-1} X^{-1} 2^{j+1}\right)\:,\ (j\geq 0),$$
$$R_{c, q, L}\ll \log L$$
(and analogous definition for $j<0$).\\
We write $\xi_j=q^{-1} X^{-1} 2^j$. Let 
$$z_l=\frac{a_l}{X}\in \mathcal{H}_j,\ z_l=\frac{c}{q}+\xi_l.$$
We have 
$$E(z_l)=E\left(\frac{c}{q}+\xi_j\right)+\int_{\xi_j}^{\xi_l} E'\left(\frac{c}{q}+\xi\right)\: d\xi$$
and thus
$$|E(z_l)|\leq \left| E\left(\frac{c}{q}+\xi_j\right)\right|+\int_{q^{-1}2^j X^{-1}}^{q^{-1}2^{j+1}X^{-1}} \left|E'\left(\frac{c}{q}+\xi\right)\right|\: d\xi.$$
From Lemma \ref{lem61} we obtain:
$$S_{c_0}\left(\frac{c}{q}+\xi\right)\ll \frac{1}{\phi(q)} \min(X, |\xi|^{-1}).$$
Thus we obtain
\begin{align*}
&\sum_{z_l\in \mathcal{U}_j} |E(z_l)|\: |S_{c_0}(z_l)|^2\\
&\ll \frac{1}{\phi(q)^2} \min (X^2, |\xi|^{-2})(1+q^{-1}2^{j+1})\\
&\ \times \left( \left|E\left(\frac{c}{q}+\xi_j\right)\right|+\int_{q^{-1}2^jX^{-1}}^{q^{-1}2^{j+1}X^{-1}}\left|E'\left(\frac{c}{q}+\xi\right)  \right| d\xi  \right).
\end{align*}
We now sum over all $c\bmod q$ and obtain
\begin{align*}
&\sum_{c\bmod q}\ \sum_{\frac{a}{X}\in I_{c,q}(L)} E\left(\frac{a}{X}\right) S_{c_0}^2\left(\frac{a}{X}\right) e\left(-N_0\frac{a}{X}\right)  \tag{6.20}\\
&\ll \frac{X^2}{\phi(q)^2} \sum_{j\::\: 2^j\leq L} \min(1, q2^{-j})(1+q^{-1}2^j)\\
&\ \ \sum_{c\bmod q}\left(\left| E\left(\frac{c}{q}+\xi_j\right)\right|+\int_{q^{-1}2^jX^{-1}}^{q^{-1}2^{j+1}X^{-1}}\left| E'\left(\frac{c}{q}+\xi\right)\right|\: d\xi \right).
\end{align*}
Recalling the definition:
\[
E(\theta)=\sum v(n) e(n\theta)\:,\ \text{with $v(n)=0$ for $n\not\in \mathcal{A}$} \tag{2.7}
\]
we have
\begin{align*}
&\sum_{c\bmod q} \left| E\left(\frac{c}{q}+\xi_j\right)\right|^2 \tag{6.21}\\ 
&=\sum_{n_1, n_2} v(n_1) v(n_2) e(n_1\xi_j)e(-n_2\xi_j)\sum_{c\bmod q} e\left(\frac{c(n_1-n_2)}{q}\right)\\
&\leq q\sum_{s \bmod q}\ \sum_{\substack{n_1\equiv n_2 \equiv s \bmod q \\ n_1, n_2\in\mathcal{A}}} 1 \ll |\mathcal{A}|^2
\end{align*}
by Lemma \ref{lem55}.\\
We apply an analogous argument to obtain
\[
\sum_{c\bmod q}\left| E'\left(\frac{c}{q}+\xi\right) \right|^2 \ll X^2 |\mathcal{A}|^2.  \tag{6.22}
\]
From (6.20), (6.21) and (6.22) we obtain Lemma \ref{lem621}.
\end{proof}

\begin{lemma}\label{lem622}
Let $X^{1/5}\leq q\leq [X^{4/5}]+1$. Then the interval $I_{c,q}(L)$ is a subset of $\mathfrak{n}$.
\end{lemma}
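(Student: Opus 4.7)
The plan is to combine Vinogradov's exponential sum bound (Lemma \ref{lem63}) with the Balog--Friedlander comparison estimate (Lemma \ref{lem62}), after observing that the rational $c/q$ supplies a valid Dirichlet approximation to any $\theta\in I_{c,q}(L_1)$ in the stated $q$-range. I interpret $L=L_1=X^{1/5}$ here, which is consistent with Lemma \ref{lem617} and Definition \ref{rdef28}.

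First I verify the Dirichlet condition. For $\theta\in I_{c,q}(L_1)$,
$$\left|\theta-\frac{c}{q}\right|\leq q^{-1}X^{-1}L_1 = q^{-1}X^{-4/5}\leq q^{-2},$$
since $q\leq [X^{4/5}]+1$. Hence Lemma \ref{lem63} is applicable with the same $c/q$, giving
$$\sum_{n\leq 20X}\Lambda(n)e(\theta n)\ll (\log X)^{7/2}\left(\frac{X}{q^{1/2}}+X^{4/5}+(Xq)^{1/2}\right).$$
In the range $X^{1/5}\leq q\leq X^{4/5}+1$ each of $X/q^{1/2}$ and $(Xq)^{1/2}$ is $\ll X^{9/10}$, so
$$\sum_{n\leq 20X}\Lambda(n)e(\theta n)\ll X^{9/10}(\log X)^{7/2}.$$
Partial summation to convert $\Lambda(n)$ into $\log p$ and to restrict to the subinterval $Int(N_0)\subset[0,20X]$ costs only a further $\log$ factor, so
$$\sum_{p\in Int(N_0)}e(\theta p)\log p\ll X^{9/10}(\log X)^{O(1)}.$$

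Next, fix $\delta>0$ with $9(1-\gamma_0)+12\delta<1$ (possible since $\gamma_0>\gamma^*>8/9$). Applying Lemma \ref{lem62} with $\gamma=\gamma_0$, once with $N$ equal to the upper endpoint and once to the lower endpoint of $Int(N_0)$ and subtracting, one obtains
$$S_{c_0}(\theta)=\sum_{p\in Int(N_0)}e(\theta p)\log p+O(X^{1-\delta}),$$
uniformly in $\theta$. Combining the two displays,
$$|S_{c_0}(\theta)|\ll X^{9/10}(\log X)^{O(1)}+X^{1-\delta}\leq X^{1-\delta_0}$$
for any fixed $\delta_0$ satisfying both $\delta_0<\delta$ and $\delta_0<1/10$. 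Since the constraint in Definition \ref{def618} is $9(1-\gamma_0)+12\delta_0<1$, which already forces $\delta_0<1/12<1/10$, we can simply pick $\delta_0$ in this common range. Thus $\theta\in\mathfrak{n}$, proving $I_{c,q}(L_1)\subseteq\mathfrak{n}$.

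There is no real obstacle: the argument is a bookkeeping exercise combining two established inputs. The only minor care needed is (i) justifying that $c/q$ itself, rather than some other Dirichlet convergent, is the approximation relevant for invoking Lemma \ref{lem63}, which is immediate from $q\leq X^{4/5}$; and (ii) handling the restriction from $n\leq 20X$ to $p\in Int(N_0)$ by partial summation (or by applying Lemma \ref{lem62} to each endpoint separately). The numerical room between $X^{9/10}$ and $X^{1-\delta_0}$ is comfortable, so the choice of $\delta_0$ is unproblematic.
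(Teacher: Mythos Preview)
Your proof is correct and follows the same route as the paper, which merely states ``This follows from Lemma~\ref{lem63}'' without spelling out the intermediate use of Lemma~\ref{lem62} or the verification of the Dirichlet condition; you have supplied exactly those missing details. The only cosmetic remark is that $\delta_0$ is fixed in Definition~\ref{def618} rather than chosen at this stage, but since the constraint $9(1-\gamma_0)+12\delta_0<1$ is strict there is room to pick the $\delta$ of Lemma~\ref{lem62} with $\delta>\delta_0$, which is all you need.
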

\begin{proof}
This follows from Lemma \ref{lem63}. Lemmas \ref{lem617}, \ref{lem619}, \ref{lem620},\ref{lem621} now show that the contributions to $J(E)$ in the formula (2.8) give a negligible contribution, if $C_1$ is chosen sufficiently large and $\eta$ sufficiently small.
\end{proof}

We now come to the proof of Proposition \ref{prop43}. We first prove a modification containing the weights $\lambda^\pm$.

\begin{lemma}\label{rlem616}
Let $\lambda^\pm$ satisfy the properties of Lemma \ref{rlem61} and $\lambda^\pm(t)=0$, if $(t, 10)>1$. Let
$$E_{0, \mathcal{A}^*, \lambda}(\theta)=\sum_{x^{\eta_0}\leq p_1\leq \cdots \leq p_l}^\sim S(\mathcal{A}_{p_1\cdots p_l}^*, X^{\eta_0}, \theta, \lambda)$$
$$E_{0, \mathcal{B}^*, \lambda}(\theta)=\sum_{x^{\eta_0}\leq p_1\leq \cdots \leq p_l}^\sim S(\mathcal{B}_{p_1\cdots p_l}^*, X^{\eta_0}, \theta, \lambda)$$
Then for $\lambda=\lambda^-$ or $\lambda^+$, we have:
$$\frac{1}{X}\sum_{1\leq a\leq X} \left( E_{0, \mathcal{A}^*, \lambda}\left(\frac{a}{X}\right)-\kappa_{\mathcal{A}}\frac{\#\mathcal{A}^*}{\#\mathcal{B}^*} E_{0, \mathcal{B}^*, \lambda}\left(\frac{a}{X}\right)\right)$$
$$S_{c_0}^2\left(\frac{a}{X}\right)e\left(-N_0 \frac{a}{X}\right)=O\left(\# \mathcal{A}^* X(\log X)^{-A}  \right).$$
\end{lemma}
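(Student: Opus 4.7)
The plan is to evaluate
$$J(\Delta E) := \frac{1}{X}\sum_{1\leq a \leq X} \Delta E(a/X)\, S_{c_0}^2(a/X)\, e(-N_0 a/X),$$
where $\Delta E := E_{0,\mathcal{A}^*,\lambda} - \kappa_{\mathcal{A}}(\#\mathcal{A}^*/\#\mathcal{B}^*)\, E_{0,\mathcal{B}^*,\lambda}$, by the Hardy--Littlewood dichotomy of Definition \ref{rdef28}: split the $a$-sum into its contribution over the major arcs $\mathcal{M}$ and over the minor arcs $\mathfrak{m}$, and show each piece is of size $O(\#\mathcal{A}^* \cdot X (\log X)^{-A})$.

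The minor arc bound follows by invoking Lemmas \ref{lem619}--\ref{lem622} separately for each of the exponential sums $E_{0,\mathcal{A}^*,\lambda}$ and $E_{0,\mathcal{B}^*,\lambda}$. Those lemmas take as their only real input the trivial size bound $|E(\theta)| \ll \#\mathcal{A}$, which is immediate here since $|\lambda^{\pm}| \leq 1$ and the number of prime tuples $(p_1,\ldots,p_l)$ in the definition of $E_0$ is only polylogarithmic. A numerical comparison using $\#\mathcal{A}^* \asymp \#\mathcal{A}/(\log X)^{O(C_1)}$ (from the restriction to the short window $\mathcal{B}^*$) then shows that taking $C_1$ and $\delta_0$ suitably large kills all four pieces of the minor arc decomposition.

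For the major arcs the key input is Proposition \ref{prop63}, which gives the pointwise cancellation
$$\Delta E(c/q) = O\bigl(\#\mathcal{A}^* (\log X)^{-A}\bigr) \qquad (q \leq Q_0,\ (c,q) = 1).$$
To extend this to the full arc $I_{c,q}(L_0)$ I would write $\theta = c/q + \xi$ with $|\xi| \leq L_0/(qX)$, decompose the $n$-sum defining $\Delta E(c/q+\xi)$ by residue classes modulo $q$, and apply Abel summation in order to peel off the factor $e(n\xi)$ against the partial sums inside each residue class. The weighting by $S_{c_0}^2$ is handled through the major arc expansion $S_{c_0}(c/q+\xi) = (\mu(q)/\phi(q))\sum_{m \in \mathrm{Int}(N_0)} e(m\xi) + O(X^{1-\delta_0})$ supplied by Lemmas \ref{lem61}, \ref{lem62}, which yields the standard estimate $\sum_{a/X \in I_{c,q}(L_0)} |S_{c_0}(a/X)|^2 \ll X^2 L_0/(q\,\phi(q)^2)$. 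Summing over $q \leq Q_0$ and coprime $c$ then produces the required bound, provided the net effect of the $e(n\xi)$ factor is only an extra $L_0 = (\log X)^{C_1}$, which is absorbed by taking $A$ larger than $C_1$ in Proposition \ref{prop63}.

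The principal obstacle is precisely this uniform extension from the rational $c/q$ to the entire arc $I_{c,q}(L_0)$. A naive Taylor expansion $e(n\xi) = 1 + O(|n\xi|)$ fails, because $|n\xi|$ can be as large as $L_0/q$, far from small. The residue-class reduction modulo $q$ is essential: within each class the phase $e(n\xi)$ varies smoothly on the scale of $q$, so Abel summation against suitably truncated tails of $\Delta E$ gives enough control. The technical heart of the argument is verifying that Proposition \ref{prop63} survives restriction of the outer $n$-sum to $n \leq T$ for arbitrary $T \leq X$, i.e.\ that the pointwise cancellation is stable under truncation; once this uniform-in-$T$ version is in hand, Abel summation closes the argument.
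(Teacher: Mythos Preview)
Your minor-arc treatment coincides with the paper's: both apply Lemmas \ref{lem617}--\ref{lem622} to $E_{0,\mathcal{A}^*,\lambda}$ and $E_{0,\mathcal{B}^*,\lambda}$ separately, using only the trivial size bound.

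On the major arcs, however, your route diverges from the paper's, and the divergence stems from a point you have overlooked. You write that ``a naive Taylor expansion $e(n\xi)=1+O(|n\xi|)$ fails, because $|n\xi|$ can be as large as $L_0/q$'', and you therefore propose Abel summation together with a uniform-in-$T$ strengthening of Proposition~\ref{prop63}. But the paper does not expand $e(n\xi)$ around $n=0$; it expands around the midpoint $n_0$ of $\mathcal{B}^*$, writing
\[
e\!\left(n\Bigl(\tfrac{c}{q}+\xi\Bigr)\right)=e(n_0\xi)\,e\!\left(n\tfrac{c}{q}\right)+O\bigl(|n-n_0|\,|\xi|\bigr).
\]
The whole purpose of restricting to the short window $\mathcal{B}^*$ (Definition~\ref{rdef27}, with $10^H\asymp(\log X)^{C_1}$) is precisely to make $|n-n_0|\le \#\mathcal{B}^*\asymp X(\log X)^{-C_1}$, so that $|n-n_0|\,|\xi|$ is a negative power of $\log X$ rather than of order $L_0/q$. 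The main term $e(n_0\xi)\cdot\Delta E(c/q)$ is then handled directly by Proposition~\ref{prop63}, and the error term $E^{(2)}$ is bounded trivially using (6.28)--(6.31). No truncated version of Proposition~\ref{prop63} is needed.

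Your Abel-summation approach is not wrong in principle, but it imports an unnecessary technical burden (the uniform-in-$T$ estimate) and misses the structural reason the short interval $\mathcal{B}^*$ was introduced in the first place. Also, your remark that ``within each class the phase $e(n\xi)$ varies smoothly on the scale of $q$'' is not quite right: $e(n\xi)$ is oblivious to residues mod $q$; it is $e(nc/q)$ that is constant on residue classes. The residue decomposition separates the arithmetic oscillation from the archimedean one, but it does nothing to tame the size of $|n\xi|$ --- that is what the short interval does.
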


\noindent\textit{Proof of Lemma \ref{rlem616} assuming Proposition \ref{prop63} }

The first step in the approximation of $E_{0, \mathcal{A}^*, \lambda}(\theta)$ inside $I_{c,q}$ consists in the replacement of the variable factors $e\left( n\left(\frac{c}{q}+\xi\right)\right)$ by $e\left(n_0\frac{c}{q}\right)e(n\xi)$, where $n_0$ is the midpoint of the interval $\mathcal{B}^*$. We set
$$\tilde{\lambda}(u)=\sum_{t\mid n} \lambda (t)$$
and obtain:
\begin{align*}
E_{0, \mathcal{A}^*, \lambda}\left(\frac{c}{q}+\xi\right)& =\sum_{X^{\eta_0}\leq p_1\leq \cdots\leq p_l}^\sim S\left(\mathcal{A}^*_{p_1\ldots p_l}, X^{\eta_0}, \left(\frac{c}{q}+\xi\right), \lambda\right)   \tag{6.23}\\
&=\sum_{X^{\eta_0}\leq p_1\leq \cdots\leq p_l}^\sim e(n_0\xi) \sum_{n\in\mathcal{B}^*}  1_{\mathcal{A}^*_{p_1\ldots p_l}}(n) e\left(n\frac{c}{q}\right) \tilde{\lambda}(n)\\
&\ \ +\sum_{X^{\eta_0}\leq p_1\leq \cdots\leq p_l}^\sim e(n_0\xi) \sum_{n\in\mathcal{B}^*}  1_{\mathcal{A}^*_{p_1\ldots p_l}}(n) e\left(n\frac{c}{q}\right) (e(n\xi-e(n_0\xi))\lambda_0(n)\\
&=:  E_{0, \mathcal{A}^*, \lambda}^{(1)}+ E_{0, \mathcal{A}^*, \lambda}^{(2)}
\end{align*}

We have

\[
E_{0, \mathcal{A}^*, \lambda}^{(1)} = e(n_0\xi)\sum_{X^{\eta_0}\leq p_1\leq \cdots \leq p_l}^\sim  S\left(\mathcal{A}^*_{p_1\ldots p_l}, X^{\eta_0}, \frac{c}{q}, \lambda\right)    \tag{6.24}
\]
and
\[
E_{0, \mathcal{A}^*, \lambda}^{(2)} = O\left(\sum_{X^{\eta_0}\leq p_1\leq \cdots \leq p_l}^\sim \ \sum_{n\in\mathcal{B}^*} 1_{\mathcal{A}^*_{p_1\ldots p_l}}(n)\: |n-n_0|\:|\xi| \:\tilde{\lambda_2}(n)\right) \:.     \tag{6.25}
\]

We obtain an analogous decomposition for the $\mathcal{B}^*$-part:
$$E_{0, \mathcal{B}^*, \lambda}:=\Sigma^{(1)}_{0,\mathcal{B}^*, \lambda }+E_{0, \mathcal{B}^*, \lambda}^{(2)} \:.$$

From (6.23), (6. 24), and (6.25) we obtain:
\[ 
\left| E_{0, \mathcal{A}^*, \lambda}\left(\frac{c}{q}+\lambda\right)- \kappa_{\mathcal{A}}\frac{\#\mathcal{A}^*}{\#\mathcal{B}^*} E_{0, \mathcal{B}^*, \lambda}\left(\frac{c}{q}+\lambda\right)\right|
 \tag{6.26}
\]
$$\leq \left| E_{0, \mathcal{A}^*, \lambda}\left(\frac{c}{q}\right)-\kappa_{\mathcal{A}}\frac{\#\mathcal{A}^*}{\#\mathcal{B}^*} E_{0, \mathcal{B}^*, \lambda}\left(\frac{c}{q}\right)\right|+|E_{0, \mathcal{A}, \lambda}^{(1)}|+\kappa_{\mathcal{A}}\frac{\#\mathcal{A}^*}{\#\mathcal{B}^*} |E_{0, \mathcal{A}, \lambda}^{(2)}|.$$
By Proposition \ref{prop63} we have:
\[
 \left| E_{0, \mathcal{A}^*, \lambda}\left(\frac{c}{q}\right)-\kappa_{\mathcal{A}}\frac{\#\mathcal{A}^*}{\#\mathcal{B}^*} E_{0, \mathcal{B}^*, \lambda}\left(\frac{c}{q}\right)\right|=O\left( \#\mathcal{A}(\log X)^{-A}\right)\:.  \tag{6.27}
\]
We also have:
\[
e(n\xi)-e(n_0\xi)=O(|n-n_0|\:|\xi|)\:.  \tag{6.28}
\]
Additionally, we have
\[
\sum_{X^{\eta_0}\leq p_1\leq \cdots\leq p_l}^{\sim}\: \sum_{n\in \mathcal{A}^*_{p_1\cdots p_l}} \sum_{t\mid n} \lambda(t)\ll \sum_{\substack{X^{\eta_0}\leq p_1\leq \cdots\leq p_l \\ t\leq X^6}}\ \sum_{n\in \mathcal{A}^*_{[p_1\cdots p_l, t]}} 1 \tag{6.29}
\]

\noindent We also have
\[
\sum_{X^{\eta_0}\leq p_1\leq \cdots\leq p_l}^{\sim}\: \sum_{n\in \mathcal{A}^*_{p_1\cdots p_l}} \sum_{t\mid n} \lambda(t)\ll \sum_{\substack{X^{\eta_0}\leq p_1\leq \cdots\leq p_l \\ t\leq Y}}^{\sim}\ \sum_{n\in \mathcal{A}^*_{[p_1\cdots p_l, t]}} 1  \tag{6.30}
\]

From Lemma \ref{lem54} we have
\[
\# \mathcal{A}^*_{[p_1\ldots p_l, t]} = O\left( \frac{\#\mathcal{A}^*}{[p_1\cdots p_l, t]} \right)  \tag{6.31}
\]
The major arcs estimate for Lemma \ref{rlem616} can now be concluded:\\
From (6.27), (6.28), (6.29), (6.30) and (6.31) we obtain:
$$\frac{1}{X}\sum_{q\leq Q_0}\sum_{(c,q)>1}\sum_{\frac{a}{X}\in I_{c,q}(L_0)} \left( E_{0, \mathcal{A}^*, \lambda}\left(\frac{a}{X}\right) - \frac{\kappa_{\mathcal{A}}\#\mathcal{A}^*}{\#\mathcal{B}^*} E_{0, \mathcal{B}^*, \lambda}\left(\frac{a}{X}\right) \right) S_{c_0}^2\left(\frac{a}{X}\right) e\left(-N_0\frac{a}{X}\right)$$
$$ = O\left(  (\#\mathcal{A}^*) X(\log X)^{-A}\right)\:.$$
The minor arcs estimates are now obtained by treating the $\mathcal{A}$-part $E_{0, \mathcal{A}^*, \lambda}(\theta)$ and the $\mathcal{B}$-part $E_{0, \mathcal{B}^*, \lambda}(\theta)$ separately. The estimates are easily carried out by the application of Lemmas \ref{lem617}, \ref{lem619}, \ref{lem620}, \ref{lem621} and \ref{lem622}.

\textit{Conclusion of the proof of Proposition \ref{prop41}}

\noindent The next step consists in replacing the functions $\lambda^{\pm}$ from Lemma \ref{rlem616} by the M\"onius function $\mu$, thus obtaining the exponential sum $E_0(\theta, \eta)$ from Proposition \ref{prop41}. We set
$$\mathcal{U}^{*'}:=\{ m\in \mathcal{U}^*\::\: (m, 10)=1\}$$
$$\mathcal{B}^{*'}:=\{ n\in \mathcal{B}^*\::\: (n, 10)=1\}$$
and observe that
$$S(\mathcal{U}^{*'}, X^{\eta_0}, \theta, \mu)=S(\mathcal{U}^*, X^{\eta_0}, \theta, \mu)$$
and
$$S(\mathcal{B}^{*'}, X^{\eta_0}, \theta, \mu)=S(\mathcal{B}^*, X^{\eta_0}, \theta, \mu)\:.$$
because of the condition $\lambda(t)=0$ for $(t, 10)>1$ we have:
\[
J(E(\mathcal{A}^*, X^{\eta_0}, \lambda^-))\leq J(E(\mathcal{A}^*, X^{\eta_0}, \mu))\leq J(E(\mathcal{A}^*, X^{\eta_0}, \lambda^+)) \tag{6.32}
\]
and
\[
J(E(\mathcal{B}^*, X^{\eta_0}, \lambda^-))\leq J(E(\mathcal{B}^*, X^{\eta_0}, \mu))\leq J(E(\mathcal{B}^*, X^{\eta_0}, \lambda^+)). \tag{6.33}
\]
We now apply Lemma \ref{rlem61} with 

\begin{eqnarray}
g(p):=\left\{ 
  \begin{array}{l l}
   0\:, & \quad \text{if $p\in \{2, 5\}$}\vspace{2mm}\\ 
    1/p\:, & \quad \text{otherwise}\:,\\
  \end{array} \right.
\nonumber
\end{eqnarray}

and obtain:\\
For all $\epsilon>0$ there is an $\eta^*$, such that
$$\limsup_{k\rightarrow\infty} \frac{|J(E(\theta, \eta^*))|\log X}{|\mathcal{A}^*|X}< \epsilon\:,\ \ \text{for}\ \eta^*\leq \eta_0\:.$$

We still have to pass from $X^{\eta_0}$ to $X^{\theta_2-\theta_1}$. We modify the analysis in \cite{maynard}, p. 156:\\
Given a set $\mathcal{C}$ and an integer $d$ we let
$$T_m(\mathcal{C}; d, \theta):=\sum_{\substack{X^\eta\leq p_m'\leq \cdots \leq p_1'\leq X^\theta \\ d{p_1'\cdots p_m'}\leq X^{\theta_1}}} S(\mathcal{C}_{p_1'\cdots p_m'}, X^\eta, \theta) $$
$$U_m(\mathcal{C}; d, \theta):=\sum_{\substack{X^\eta\leq p_m'\leq \cdots \leq p_1'\leq X^\theta \\ d{p_1'\cdots p_m'}\leq X^{\theta_1}}} S(\mathcal{C}_{p_1'\cdots p_m'}, p_m'X^\eta, \theta) $$
$$V_m(\mathcal{C}; d, \theta):=\sum_{X^\eta< p_m'\leq \cdots \leq p_1'\leq X^\theta } S(\mathcal{C}_{p_1'\cdots p_m'}, p_m', \theta) .$$
Buchstab's identity shows that 
$$U_m(\mathcal{C}; d, \theta)=T_m(\mathcal{C}; d, \theta)-U_{m+1}(\mathcal{C}; d, \theta)-V_{m+1}(\mathcal{C}; d, \theta)$$
The $T_m$-terms are now handled by Lemma $\ref{rlem616}$, whereas the $V_m$-terms are reduced to Proposition \ref{prop42}.\\
Proposition \ref{prop41} now has been reduced to Proposition \ref{prop63} and Proposition \ref{prop42}.

\textit{Proof of Proposition \ref{prop42} assuming Proposition \ref{prop64}}

\noindent We restrict ourselves to $E_1(\theta, \eta_0)$, since the case of $E_2(\theta, \eta_0)$ is completely analogous. As in the proof of Proposition \ref{prop41} we replace the variable factoris $e(n(\frac{c}{q}+\xi))$ by $e(n_0\xi)e(n\frac{c}{q})$ with $n_0\in\mathcal{B}^*$.\\
We obtain
$$\sum_{0, \mathcal{A}^*}e\left(\frac{c}{q}+\xi\right)=\sum_{n}1_{U(\mathcal{A}^*_{p_1\ldots p_l}, p_j)}(n)\:e\left(n\left(\frac{c}{q}+\xi\right)\right)=\Sigma_{0, \mathcal{A}^*}^{(1)}  +\Sigma_{0, \mathcal{A}^*}^{(2)}  $$
with
$$\Sigma_{0, \mathcal{A}^*}^{(1)}= e(n_0\xi) \sum_{n}1_{U(\mathcal{A}^*_{p_1\ldots p_l}, p_j)}(n) e\left(n\frac{c}{q}\right)$$
and
$$\Sigma_{0, \mathcal{A}^*}^{(2)}= e(n_0\xi) \sum_{n}1_{U(\mathcal{A}^*_{p_1\ldots p_l}, p_j)}(n) ( e(n\xi)-e(n_0\xi)).$$
An analogous decomposition holds for
$$\sum_{0, \mathcal{B}^*}e\left(\frac{c}{q}+\xi\right)\:.$$
The claim of Proposition \ref{prop42} now follows quite analogously to the proof of Lemma \ref{rlem616}. We use Proposition \ref{prop64} for the estimate of
$$\Sigma_{0, \mathcal{A}^*}^{(1)}  -\frac{\kappa_{\mathcal{A}}\#\mathcal{A}^*}{\#\mathcal{B}^*}  \Sigma_{0, \mathcal{B}^*}^{(1)}\:, $$
where for the other major arcs contribution we again use the estimate 
$$|e(n\xi)-e(n_0\xi)|=O(|n-n_0|\:|\xi|)\:.$$
The minor arcs estimates follow again by the application of Lemmas \ref{lem617}, \ref{lem619}, \ref{lem620}, \ref{lem621} and \ref{lem622}.

\textit{Proof of Proposition \ref{prop43}}

\noindent We first deal with  the $a$-variable major arcs contribution:\\
Let $1\leq q\leq Q_0$, $(c, q)=1$, $\eta=q^{-1}X^{-1}L_0$. By Lemma \ref{lem62} and the GRH we have for $|\xi|\leq \eta$:
$$S_{c_0}\left(\frac{c}{q}+\xi\right)=\frac{\mu(q)}{\phi(q)}\:\sum_{m\in Int(N_0)} e(m\xi)+O(X^{1-\delta_0}).$$

We now approximate $E\left(\frac{c}{q}+\xi\right)$. For $n\in \mathcal{U}\left( \mathcal{B}^*_{\prod(\vec{p})}, X^{z(Log \vec{p})}\right)$ we write
$$n=p_1\cdots p_l\cdot m\ \ \text{and}\ \ m=q_1\cdots q_r$$
with
$$X^{z(Log \vec{p})} \leq q_1< q_2<\cdots <q_v.$$
By partitioning the range of the $p_k$  and the $q_j$ into intervals and using GRH we see
\begin{align*}
\mathcal{U}(q, s)&:=\# \left\{ n\in \mathcal{U}\left( \mathcal{B}^*_{\prod(\vec{p})}, X^{z(Log \vec{p})}\right),\ n\equiv s \bmod q\right\}  \tag{6.34}\\
&=\mathcal{U}(q, s_0)\left(1+O((\log X)^{-A})\right)\:,
\end{align*}
for any $s_0$ with $(q, s_0)=1$, i.e. $\mathcal{U}(q, s)$ is asymptotically independent of $s$.\\
From (6.34) we obtain:
\begin{align*}
E\left(\frac{c}{q}+\xi\right)&=\sum_{\substack{s \bmod q \\ (s, q)=1}} e\left(\frac{sc}{q}\right) \sum_{n\in \mathcal{U}(q, s)} e(n\xi)(1+O(\log X)^{-4})\\
&=\frac{\mu(q)}{\phi(q)} \sum_{\vec{p}\::\: Log\: \vec{p}\:\in\: \mathcal{R}}\ \  \sum_{m\:\in\: \mathcal{U}(\mathcal{B}_{\prod(\vec{p})}, X^{z(Log\: \vec{p})})}  (1+O(\log X)^{-C_4}).
\end{align*}

We obtain

\begin{align*}
&\int_{\frac{c}{q}-\eta}^{\frac{c}{q}+\eta} E\left(\frac{c}{q}+\xi\right) S_{c_0}\left(\frac{c}{q}+\xi\right)^2 e\left(-N_0\left(\frac{c}{q}+\xi\right)\right) d\xi  \tag{6.35}\\
&=\frac{\mu(q)^3}{\phi(q)^3} e\left(-N_0\frac{c}{q}\right) \int_{-1/2}^{1/2} E(\xi)\sum_{(n_1, n_2)\in Int} e(n_1\xi+n_2\xi) e(-N_0\xi) d\xi\\
&=\frac{\mu(q)^3}{\phi(q)^3}\ e\left(-N_0\frac{c}{q}\right) \sum_{\vec{p}\::\: Log\: \vec{p}\:\in\: \mathcal{R}} \#\Big\{(m, n_1, n_2)\::\: m\in \mathcal{U}\left( \mathcal{B}^*_{\prod(\vec{p})}, X^{z(Log\: \vec{p})}\right),\\
&\ \ \ \  n_i\in Int, m+n_1+n_2=N_0\Big\}\:.
\end{align*}

We write $m=p_1\cdots p_l\cdot h$ with $m=q_1\cdots q_v$. By the well-known connection between the Buchstab function and the number of integers free of small prime factors, we have:
\begin{align*}
&\#\left\{ h\::\: h\in \frac{\# \mathcal{B}^*}{p_1\cdots p_l}\::\: p(h\Rightarrow p\geq z(Log(\vec{p}))\right\} \tag{6.36}\\
&=\frac{\# \mathcal{B}^*}{p_1\cdots p_l} \omega\left( \frac{\log (X/p_1\cdots p_l)}{Log(\vec{p})}\right) \frac{1}{\log X} (1+o(1))\:.
\end{align*}
The function
$$M(q)=\sum_{(c, q)=1}e\left(-N_0\frac{c}{q}\right)$$
is a multiplicative function of $q$: We obtain the singular series $\mathfrak{S}(N_0)$.\\
From (6.34), (6.35), (6.36) we obtain the major arcs contribution:
\begin{align*}
&\sum_{q\leq Q_0} \sum_{(c, q)=1} \int_{\frac{c}{q}-\eta}^{\frac{c}{q}+\eta} E(\xi)S_{c_0}^2(\xi) e(-N_0\xi)d\xi\\
&= \frac{X(\# \mathcal{B}^*)}{4\log X} \mathfrak{S}_0(N_0) \int\cdots\int_\mathcal{R} \frac{\omega(1-u_1-\cdots - u_l)}{u_1\cdots u_l z(u_1, \ldots, u_l)} du_1\cdots du_l (1+o(1))\:.
\end{align*}

The proof of Proposition \ref{prop43} is complete by application of Lemmas \ref{lem617}, \ref{lem619}, \ref{lem620}, \ref{lem621}, \ref{lem622}.

%


\section{Sieve asymptotics for local version of Maynard}

In this section we prove Proposition \ref{prop63}. We also prove Proposition \ref{prop64} assuming Proposition \ref{prop72} given below.

\textit{Proof of Proposition \ref{prop63}}

Let $q=q_1q_2$ with $q_1\mid 10$, $(q_2, 10)=1$. The solution set of the congruence condition 
$$n\equiv s \bmod q$$
is then a union of solution sets of systems of the form
\[
n\equiv u_1\bmod 10 \tag{I}
\]
\[
n\equiv u_2\bmod q_2\:, \tag{II}
\]
where $u_1\in\{1, 3, 7, 9\}$. We also have that
\[
n\equiv 0\bmod p_1\ldots p_l t\:. \tag{III}
\]
We substitute
$$u=10\tilde{u}+u_1\:.$$
The system (I), (II), (III) then becomes 
$$10\tilde{u}+u_1 \equiv v_2 \bmod [p_1\ldots p_l t, q_2]\:,$$
if (I), (II), (III) are compatible.\\
The system (I), (II), (III) may be written with the use of exponential sums
\begin{align*}
&\sum_{X^{\eta_0}\leq p_1\leq \cdots \leq p_l} S\left(\mathcal{A}^*_{p_1\cdots p_l, q, s}, X^{\eta_0}, \lambda, \frac{c}{q}\right)  \tag{7.1}\\
=& \sum_{\substack{s \bmod q_2 \\ (s, q_2)=1}} \ \sum_{X^{\eta_0}\leq p_1 \leq \cdots \leq p_l}\ \sum_{t\leq q} \lambda(t) \sum_{\tilde{n}\in\frac{\mathcal{B}^*-U}{10}} \frac{1}{[t, q_2]} \sum_{l=0}^{[t, q_2]-1} e\left( \frac{l(\tilde{u}-10^{-1}(q_2)(v_2-u_i)}{[t, q_2]}  \right)\:,
\end{align*}
where $10\cdot 10^{-1}(q_2) \equiv 1 \bmod q_2$.\\
We now carry out the same computation with $\mathcal{B}^*$ instead of $\mathcal{A}^*$. We find that the main terms cancel and the other ones may be estimated by Lemma \ref{lem54} to give the main result.\\
We now shall reduce Proposition \ref{prop64} to Proposition \ref{prop73} stated below. The ranges of summation (6.4) and (6.5) are defined by several sets of linear forms of the vectors
\[
Log^{(1)} (n) :=\left( \frac{\log \tilde{p}_1}{\log X} , \ldots,  \frac{\log \tilde{p}_v}{\log X}  \right)\:, \tag{7.2}
\]
where $n=\tilde{p}_1\cdots \tilde{p}_v$.\\
1) The linear forms from $\mathcal{L}$, included by $\sum^\sim$\:.\\
2) The linear forms related to the conditions 
$$n\in \mathcal{A}^*_{p_1\cdots p_v}\:,\ \ p\mid n\ \Rightarrow \ p>p_j\:.$$
3) The linear forms related to the chain of inequalities 
$$p_1\leq \cdots \leq p_v\:.$$
4) The linear forms analogous to (3) related to the other prime factors.

\noindent All the linear forms from (1) to (4) now form a set
\[
\mathcal{L}^*:=\bigcup_v \tilde{\mathcal{L}}(v)\:, \tag{7.3}
\]
where $v$ denotes the total number of prime factors.\\
To be able to describe the set of integers satisfying these linear inequalities by a polytope, we pass from the vector $Log^{(1)}$ in (7.2) to the vector
\[
Log^{(2)} (n) :=\left( \frac{\log \tilde{p}_1}{\log n} , \ldots,  \frac{\log \tilde{p}_v}{\log n}  \right)\:. \tag{7.3}
\]
Obviously
$$Log^{(2)} (n)\in Q_v(\eta):= \{(x_1, \ldots, x_n)\in\mathbb{R}^v\:,\ \eta\leq x_1\leq \cdots \leq x_v, x_1+\cdots+x_v= 1\}\:.$$
By a closed convex polytope in $\mathbb{R}^v$ we mean a region $R$ defined by a finite number of non-affine linear inequalities in the coordinates (equivalently this is the convex hull of a finite set of points in $\mathbb{R}^v$).\\
Given a closed convex polytope $R\subseteq Q_l(\eta)$, we let

\begin{eqnarray}
1_R(n):=\left\{ 
  \begin{array}{l l}
   1\:, & \quad \text{if $n=p_1\cdots p_v$ with $Log^{(2)}(n)\in R^v$}\vspace{2mm}\\ 
    0\:, & \quad \text{otherwise}\:,\\
  \end{array} \right.
\nonumber
\end{eqnarray}

We now let $\widebar{R}\subseteq [\eta, 1]^{v-1}$ denote the projection of $R$ onto the first $v-1$ coordinates (which is also a convex polytope).

\begin{definition}\label{def71}
Fix $\eta>0$ and let $v\in\mathbb{Z}$ satisfy $1\leq v\leq 2/q$. Let $\gamma>0$ and let
$$\vec{a}:=(a_1, a_2, \ldots, a_{v-1})$$
be a sequence of real numbers. Let 
$$\vec{p}:=(p_1, \ldots, p_v)$$
be an $l$-tuplet of prime numbers, $\prod(\vec{p})=p_1\cdots p_v$. Then we define
$$\mathcal{C}(\vec{a}, \gamma):= \left\{\vec{p}=(p_1, \ldots, p_v)\::\: p_j\in (X^{a_j}, X^{a_j+\gamma}), 1\leq j\leq v_1, \prod(\vec{p})\in\mathcal{B}^*\right\}$$
and 
$$\mathcal{C}(\vec{a}, \gamma, q, s):=\left\{\vec{p}\in \mathcal{C}(\vec{a}, \gamma)\::\: \prod(\vec{p})\equiv s \bmod q\right\}\:.$$
The sequence $\vec{a}$ and the box $\mathcal{C}(\vec{a}, \gamma)$ are called \textit{normal}, if $a_j+\gamma<a_{j+1}$, for \mbox{$1\leq j\leq v-2$.}
\end{definition}

\begin{proposition}\label{prop72}
Let $\mathcal{C}(\vec{a}, \gamma)$ be as defined in Definition \ref{def71}, $\gamma=(\log X)^{-C_3}$ for $C_3>0$ fixed. Let $q\leq Q_0$, $(s, q)=1$. Then
$$\sum_{n\in \mathcal{C}(\vec{a}, \gamma, q, s)} w_n=O\left(\left(\frac{1}{\phi(q)} \sum_{n\in\mathcal{C}(\vec{a}, \gamma, q, s)} 1\right) (\log X)^{-A}\right)\:.$$
\end{proposition}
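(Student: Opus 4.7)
The plan is to prove Proposition \ref{prop72} via a two-variable discrete circle method: I detect the congruence $\prod(\vec{p}) \equiv s \pmod{q}$ by additive characters modulo $q$, and detect membership in $\mathcal{A}^*$ by additive characters modulo $X$ (the $b$-variable circle method set up in Definition \ref{rdef27}). It is convenient first to split $q = q_1 q_2$ with $q_1 \mid 10^\infty$ and $(q_2, 10) = 1$, and correspondingly write $s$ via CRT as $(s_1, s_2)$. The condition $n \equiv s_1 \pmod{q_1}$ restricts the low-order decimal digits of $n$ (and is compatible with the defining restriction of $\mathcal{A}$), while the condition $n \equiv s_2 \pmod{q_2}$ is detected by
\[
1_{n\equiv s_2\,(q_2)}(n) \;=\; \frac{1}{q_2}\sum_{a=0}^{q_2-1} e\!\left(\tfrac{a(n-s_2)}{q_2}\right).
\]

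I would write $w_n^* = 1_{\mathcal{A}^*}(n) - \kappa_\mathcal{A} \#\mathcal{A}^*/\#\mathcal{B}^*$ and handle each part separately. The expected main term $\kappa_\mathcal{A}(\#\mathcal{A}^*/\#\mathcal{B}^*) \cdot \#\mathcal{C}(\vec{a},\gamma,q,s)$ for the $\mathcal{A}$-part is recovered by combining the $a = 0$ contribution with the GRH-based equidistribution of $\vec{p} \in \mathcal{C}(\vec{a},\gamma)$ across residue classes modulo $q_1$, and then with the equidistribution of $\mathcal{A}^*$-integers across such low-order digit classes (with the correction factor $\kappa_\mathcal{A}$ accounting for coprimality to $10$). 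After subtracting the $\mathcal{B}$-part contribution, these two leading contributions cancel, and one is left with error terms only.

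The remaining error terms, coming from non-zero frequencies $1 \le a < q_2$ and from non-uniform fluctuations, are treated by the $b$-variable circle method via Parseval's identity on $\mathbb{Z}/X\mathbb{Z}$:
\[
\sum_{\vec{p}\in \mathcal{C}(\vec{a},\gamma)} 1_{\mathcal{A}^*}(p_1\cdots p_v)\,e\!\left(\tfrac{a\, p_1\cdots p_v}{q_2}\right)
= \frac{1}{X} \sum_{b=1}^{X} S_{\mathcal{A}^*}\!\left(\tfrac{b}{X}\right)\, T_{\vec{a},\gamma,s_1}\!\left(\tfrac{a}{q_2}-\tfrac{b}{X}\right),
\]
where $T_{\vec{a},\gamma,s_1}(\theta) := \sum_{\vec{p}\in \mathcal{C}(\vec{a},\gamma),\; p_1\cdots p_v \equiv s_1 (q_1)} e(\theta\, p_1\cdots p_v)$. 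Fixing all but one coordinate of $\vec{p}$ reduces $T_{\vec{a},\gamma,s_1}$ to an exponential sum of primes in a short interval and a fixed residue class, bounded on major arcs by Lemma \ref{lem61} (using GRH) and on minor arcs by the Vinogradov-type bound of Lemma \ref{lem63}. Meanwhile $|S_{\mathcal{A}^*}(b/X)|$ is controlled via the Fourier bounds on $F_Y$ of Lemmas \ref{lem51}--\ref{lem56}.

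The main obstacle will be the joint estimation of the double sum over $a \in [1, q_2)$ and $b \in [1, X]$: one must partition the pairs $(a,b)$ according to whether $a/q_2 - b/X$ lies in a major or minor arc for the prime sum, and in each case invoke the appropriate hybrid large sieve bound (Lemmas \ref{lem55}, \ref{lem56}) to save enough in the $S_{\mathcal{A}^*}$ factor. Since $q \le Q_0 = (\log X)^3$ is only polylogarithmic, any power-of-$\log$ loss in $q$ is affordable, but the full saving $(\log X)^{-A}$ must come entirely from the Fourier bounds on $F_Y$ combined with the prime-distribution estimates under GRH.
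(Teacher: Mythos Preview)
Your overall framework is right and matches the paper: one writes $\#(\mathcal{C}\cap\mathcal{A}^*)=\tfrac1X\sum_{b\le X}S_{\mathcal{A}^*}(b/X)S_{\mathcal{C}}(-b/X)$ and then analyses the $b$-sum. (The paper does not unfold the congruence $\prod\vec p\equiv s\pmod q$ by characters; the condition is simply left inside $\mathcal{C}=\mathcal{C}(\vec a,\gamma,q,s)$, which avoids your extra $a$-sum over $\mathbb{Z}/q_2\mathbb{Z}$. Since $q\le Q_0=(\log X)^3$, this is a cosmetic rather than essential difference.)

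The genuine gap is in your minor-arc treatment. You propose to fix $p_1,\dots,p_{v-1}$ and bound the remaining prime sum by Lemma~\ref{lem63}. This does not work here for two reasons. First, the free prime $p_v$ ranges over an interval of length $\asymp\#\mathcal{B}^*/(p_1\cdots p_{v-1})$, i.e.\ a \emph{short} interval of length $X/(\log X)^{C_1}$ times a factor, and Lemma~\ref{lem63} is a bound for $\sum_{n\le N}$, not for short intervals. Second, and more seriously, for $b\notin\mathcal{M}^{(b)}$ the denominator in a Dirichlet approximation to $b/X$ (or to $(a/q_2-b/X)\cdot m$) can be as small as $(\log X)^{C_4}$, so Vinogradov yields only a saving of size $(\log X)^{-C_4/2}$ in $S_{\mathcal{C}}$. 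Against this you must sum $|S_{\mathcal{A}^*}(b/X)|$ over \emph{all} $b\le X$, and Lemma~\ref{lem52} gives $\sum_b F_X(b/X)\ll X^{27/77}$, a genuine power of $X$. No polylogarithmic saving in $S_{\mathcal{C}}$ can absorb this.

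What the paper actually does (Propositions~\ref{prop81}--\ref{prop83}) is a three-fold split of the $b$-range. The major arcs $\mathcal{M}^{(b)}$ give the main term. The \emph{generic} minor arcs are those $b$ with $F_X(b/X)<X^{-23/80}$; here one gets all the saving from the $\mathcal{A}^*$-side via Lemmas~\ref{lem53}--\ref{lem54} together with Parseval for $S_{\mathcal{C}}$, using no arithmetic structure of $\mathcal{C}$ whatsoever. The \emph{exceptional} set $\mathcal{E}=\{b:F_X(b/X)\ge X^{-23/80}\}$ has $\#\mathcal{E}\ll X^{23/40}$, and there the required saving in $S_{\mathcal{C}}$ comes not from a single prime variable but from Maynard's bilinear sum bound (Lemma~\ref{lem111}, i.e.\ Lemma~13.1 of \cite{maynard}), which needs a factorisation $n=n_1n_2$ with $n_1\in[X^{9/25},X^{17/40}]$. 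This is precisely the Type~II hypothesis $\sum_{i\in\mathcal{I}}a_i\in[\theta_1,\theta_2]$ inherited from Proposition~\ref{prop64}; without it Proposition~\ref{prop72} is not claimed. Your sketch omits this bilinear input entirely, and that is the missing idea.
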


\textit{Proof of Proposition \ref{prop64} assuming Proposition \ref{prop73}}

\begin{definition}\label{prop73}
Let $\delta_0:=(\log X)^{-C_3}$. We cover  $[\eta, 1]^{v-1}$ by $O_\eta(\delta_0^{-(v-1)})$ disjoint hypercubes $\mathcal{C}(\vec{a}, \delta_0)$. We partition the $\vec{a}\in\widebar{R}$ into two disjoint sets:
$$\mathcal{Y}_1:=\{ \vec{a}\in\widebar{R}\::\: \mathcal{C}(\vec{a}, \delta_0)\subseteq \widebar{R}\}$$
$$\mathcal{Y}_2:=\{ \vec{a}\in\widebar{R}\::\: \mathcal{C}(\vec{a}, \delta_0)\cap bd \widebar{R}\neq \emptyset \}\:.$$
Since the set $\mathcal{L}^*$ of linear forms defining $R$ imply
$$\frac{\log p_i}{\log n}\neq \frac{\log p_j}{\log n}\:,\ \text{for}\ i\neq j\:,$$
$\mathcal{C}(\vec{a}, \gamma)\subseteq R$ implies that $\mathcal{C}(\vec{a}, \gamma)$ is normal. 
\end{definition}

\noindent We have thus by Proposition \ref{prop72} that
$$\sum_{\vec{a}\::\: \mathcal{C}(\vec{a}, \delta_0)\subseteq \widebar{R}}\  \sum_{n\in \mathcal{C}(\vec{a}, \delta)} e\left(n \frac{c}{q}\right) w_n^*=\sum_{\substack{s \bmod q \\ (s, q)=1}} e\left(\frac{sc}{q}\right)\ \sum_{\vec{a}\::\:\mathcal{C}(\vec{a}, \delta_0)}\ \sum_{n\in \mathcal{C}(\vec{a}, \delta_0, q, s)} w_n^*$$

\noindent By the Prime Number Theorem for short intervals and arithmetic progressions, we have for any $s_0$ with $(s_0, q)=1$:
$$\sum_{n\in \mathcal{C}(\vec{a}, \delta_0, q, s)}1 =\left(\sum_{n\in \mathcal{C}(\vec{a}, \delta_0, q, s_0)}1 \right)\left( 1+O(\log X)^{-A}\right)\:.$$
Thus, we obtain by Proposition \ref{prop72}:

$$\sum_{\vec{a}\::\: \mathcal{C}(\vec{a}, \delta_0)\in \widebar{R}}\ \sum_{n\in C^+(u, \gamma)} e\left(n\frac{c}{q}\right) w_n^*=O\left(\sum_{\vec{a}\::\: \mathcal{C}(\vec{a}, \delta_0)\subseteq \widebar{R}} |\mathcal{C}(\vec{a}, \delta_0)|\right) (\log X)^{-A}\:.$$ 

\noindent For the contribution of $\mathcal{Y}_2$ we estimate the total volume of the $\mathcal{C}(\vec{a}, \delta_0)$ and treat the $\mathcal{A}^*$-part and the $\mathcal{B}^*$-part separately.\\
Proposition \ref{prop64} thus has been reduced to Proposition \ref{prop72}.


\section{$b$-variable circle method}

In this section we state propositions needed in the estimate of type II expressions by the $b$-variable circle method. We then derive Proposition \ref{prop72} from them.

\begin{proposition}\label{prop81}
Fix $\eta>0$ and let $v\in \mathbb{Z}$ satisfy $1\leq v\leq 2/\eta$. Let 
$$\mathcal{C}:=\mathcal{C}(\vec{a}, r, q, s)$$
be as in Definition \ref{def71}. Let $q\leq Q_0$. Let $\mathcal{M}^{(b)}=\mathcal{M}^{(b)}(\mathcal{C}_4)$ be given by
$$\mathcal{M}^{(b)}:=\left\{ 0\leq b< X\::\: \left|\frac{b}{X}-\frac{d}{r}\right|\leq \frac{(\log X)^{C_4}}{X} \right\} $$
for some integers $d, r$ with $r\leq (\log X)^{C_4}$, $r\mid X$.\\
Then, if $C_4$ is chosen sufficiently large,
$$\frac{1}{X}\sum_{\substack{0\leq b< X \\ b\in \mathcal{M}}} \mathcal{S}_{\mathcal{A}^*} \left(\frac{b}{X}\right)S_{\mathcal{C}}\left(-\frac{b}{X}\right)-\frac{\kappa_{\mathcal{A}}\#\mathcal{A}^*}{\#\mathcal{B}^*} \ \#\mathcal{C}(\vec{a}, r, q, s)=O\left(\frac{\# \mathcal{A}^*}{(\log X)^A} \right)\:.$$
The implied constants depend on $A$, but not on $\eta$, $v$ and the $a_j$.
\end{proposition}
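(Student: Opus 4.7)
This is the major-arc evaluation of the convolution $\mathcal{A}^*\star\mathcal{C}$ in the $b$-variable discrete circle method: partition $\mathcal{M}^{(b)}$ into arcs about rationals $d/r$, factor each of $S_{\mathcal{A}^*}$ and $S_{\mathcal{C}}$ on every arc into a local factor at $d/r$ times a smooth factor in the perturbation $m/X$, and reassemble. Crucially, $r\mid X=10^k$ forces $r=2^\alpha 5^\beta$, so only finitely many ``anisotropic'' rationals appear and the arithmetic bookkeeping is controllable.

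First I write $\mathcal{M}^{(b)}$ as a (Farey-disjoint) union, over $r\mid X$ with $r\leq L_1:=(\log X)^{C_4}$ and $(d,r)=1$, of arcs $\{b:|b-dX/r|\leq L_1\}$, and parametrize $b=dX/r+m$ with $|m|\leq L_1$. On each arc I evaluate the two exponential sums. For $S_{\mathcal{C}}(-d/r-m/X)$, every prime factor $p_i>X^{\eta_0}$ is coprime to $r$, so GRH (Lemma \ref{lem61}) applied in each prime variable in its box, combined with the congruence $\prod p_i\equiv s\pmod q$ via the Chinese Remainder Theorem, yields an asymptotic of the shape
$$S_{\mathcal{C}}\bigl(-\tfrac{d}{r}-\tfrac{m}{X}\bigr)=\frac{c_r(d)}{\phi(r)}\cdot\frac{\#\mathcal{C}}{\#\mathcal{B}^*}\sum_{n\in\mathcal{B}^*}e(-nm/X)+O\bigl(\#\mathcal{C}\,(\log X)^{-A'}\bigr),$$
where $c_r$ is the Ramanujan sum. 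For $S_{\mathcal{A}^*}(d/r+m/X)$, the residue of $a\in\mathcal{A}^*$ modulo $r=2^\alpha 5^\beta$ is determined by the last $\max(\alpha,\beta)$ decimal digits of $a$, so a direct digit count, together with the hybrid Fourier estimates of Lemmas \ref{lem55}--\ref{lem56} to handle the perturbation $m/X$ uniformly, gives the parallel asymptotic
$$S_{\mathcal{A}^*}\bigl(\tfrac{d}{r}+\tfrac{m}{X}\bigr)=\kappa_{\mathcal{A}}\,\frac{c_r(d)}{\phi(r)}\cdot\frac{\#\mathcal{A}^*}{\#\mathcal{B}^*}\sum_{n\in\mathcal{B}^*}e(nm/X)+O\bigl(\#\mathcal{A}^*\,(\log X)^{-A'}\bigr),$$
where the prefactor $\kappa_{\mathcal{A}}$ records the adjustment from density-in-$\mathcal{B}^*$ to density relative to the coprime-to-$10$ subset of $\mathcal{B}^*$ (cf.\ Definition \ref{def21})---the natural comparison set because every element of $\mathcal{C}$ is automatically coprime to $10$.

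Substituting and collecting, the $d$-sum produces $\sum_{(d,r)=1}|c_r(d)|^2$, which after division by $\phi(r)^2$ and summation over the finite set of divisors $r\mid 10^k$ with $r\leq L_1$ contributes a convergent arithmetic factor equal to $1$ (no nontrivial singular series survives). The remaining $m$-sum $(\#\mathcal{B}^*)^{-2}\sum_{|m|\leq L_1}\bigl|\sum_{n\in\mathcal{B}^*}e(nm/X)\bigr|^2$ evaluates, by a Fej\'er-kernel computation valid whenever $C_1\geq C_4$ (so that $\#\mathcal{B}^*\leq X/L_1$ and the kernel is effectively constant on $\mathcal{B}^*\times\mathcal{B}^*$), to $X/\#\mathcal{B}^*\cdot(1+o(1))$. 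The outer prefactor $1/X$ then cancels to leave precisely the main term $\kappa_{\mathcal{A}}(\#\mathcal{A}^*/\#\mathcal{B}^*)\cdot\#\mathcal{C}$, and all error terms aggregate to $O(\#\mathcal{A}^*(\log X)^{-A})$ via the Large Sieve inequality (Lemma \ref{lem57}) after summation over the $O((\log X)^{2C_4})$ choices of $(d,r,m)$.

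\emph{The main obstacle} is the asymptotic for $S_{\mathcal{A}^*}$ at $d/r$ with $r=2^\alpha 5^\beta$. Maynard's exponential-decay bound (Lemma \ref{lem51}) requires a denominator with a factor coprime to $10$ and so does not apply; one must instead exploit the digit-level description of $\mathcal{A}^*$ directly and invoke the alternative hybrid bound (Lemma \ref{lem56}) to obtain the requisite $(\log X)^{-A}$ savings uniformly over $|m|\leq L_1$. Identifying the prefactor $\kappa_{\mathcal{A}}$ as the correct conditional density---the one that appears when $\mathcal{A}^*$ is intersected with the coprime-to-$10$ set on which the primes in $\mathcal{C}$ live---also requires careful digit-level bookkeeping.
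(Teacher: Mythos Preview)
Your overall architecture---parametrize the arcs, factor each of $S_{\mathcal{A}^*}$ and $S_{\mathcal{C}}$ into a local piece at $d/r$ times a smooth piece in $m/X$, and recombine---matches the paper's approach in spirit. But the specific asymptotic you write for $S_{\mathcal{A}^*}(d/r+m/X)$ is wrong, and this propagates to a wrong main term.

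Concretely, at $m=0$ your formula reads $S_{\mathcal{A}^*}(d/r)=\kappa_{\mathcal{A}}\frac{c_r(d)}{\phi(r)}\#\mathcal{A}^*+O(\cdots)$. Take $a_0=0$, $r=2$, $d=1$: then $S_{\mathcal{A}^*}(1/2)=\sum_{a\in\mathcal{A}^*}(-1)^a$ depends only on the parity of the last digit, which ranges over $\{1,\ldots,9\}$, giving $-\tfrac{1}{9}\#\mathcal{A}^*$; your formula gives $-\kappa_{\mathcal{A}}\#\mathcal{A}^*=-\tfrac{10}{9}\#\mathcal{A}^*$. The point is that $\mathcal{A}^*$ is \emph{not} equidistributed among reduced residues mod $r$ for $r\mid 10$ in the way that would produce a Ramanujan-sum local factor; the digit structure intervenes. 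Consequently, when you multiply your two asymptotics and sum, the ``arithmetic factor'' is $\sum_{r\in\{1,2,5,10\}}\mu(r)^2/\phi(r)=5/2$, not $1$, so you land on $\tfrac{5}{2}\kappa_{\mathcal{A}}(\#\mathcal{A}^*/\#\mathcal{B}^*)\#\mathcal{C}$.

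The paper avoids this by a three-way split $\mathcal{M}^{(b)}=\mathcal{M}_1\cup\mathcal{M}_2\cup\mathcal{M}_3$. On $\mathcal{M}_1$ (nearby rationals whose denominator has a part coprime to $10$) Lemma~\ref{lem51} makes $S_{\mathcal{A}^*}$ exponentially small. On $\mathcal{M}_2$ (the genuinely perturbed arcs, $m\neq 0$) the paper does \emph{not} use a Fej\'er-kernel argument; instead it slices the last prime variable $p_v$ into short subintervals and observes that the resulting complete geometric sum $\sum_{1\le j<\Delta^{-1}}e(j\Delta c)=-1$ kills the contribution directly. The main term comes solely from $\mathcal{M}_3$ (exact rationals $b/X=d/r$), where one computes $S_{\mathcal{C}}(d/r)=\mu(r)\tfrac{\phi(q)}{\phi([q,r])}\#\mathcal{C}$; since $r\mid 10^k$, the factor $\mu(r)$ vanishes unless $r\in\{1,2,5,10\}$, and for these four values one evaluates $S_{\mathcal{A}^*}(d/r)$ \emph{exactly} by a last-digit count (rather than via a universal Ramanujan-sum formula) and checks that the resulting finite sum produces $\kappa_{\mathcal{A}}$. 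Your plan can be repaired along these lines: drop the claimed asymptotic for $S_{\mathcal{A}^*}$ at $10$-smooth denominators and replace it by the exact digit evaluation at the four squarefree $r$, and handle the $m\neq 0$ perturbations by the geometric-series cancellation rather than by truncating a Fej\'er kernel.
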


\begin{proposition}\label{prop82}(Generic minor arcs)\\
Let $\mathcal{C}$ and $\mathcal{M}(C_4)$ be as in Proposition \ref{prop81}. Then there is some exceptional set
$$\mathcal{E}:=\mathcal{E}(\mathcal{C}) \subseteq [0,X]\:,\ \ \text{with}\ \ \#\mathcal{E}\leq X^{23/40}\:,$$
such that
$$\frac{1}{X}\sum_{\substack{b< X \\ b\not\in \mathcal{E}}} \left| S_{\mathcal{A}^*}\left(\frac{b}{X}\right) S_{\mathcal{C}}\left(-\frac{b}{X}\right)  \right| =O\left( \frac{\# \mathcal{A}^*}{X^\epsilon} \right)  \:. $$
The implied constant depends on $\eta$ but not on the $a_j$.
\end{proposition}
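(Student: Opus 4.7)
The plan is to take $\mathcal{E}$ to be the set of $b$ on which the exponential sum $S_{\mathcal{A}^*}(b/X)$ is abnormally large, and to bound the complementary sum by combining the Fourier-analytic estimates of Section 5 with Parseval/Cauchy--Schwarz applied to $S_\mathcal{C}$. First I would reduce to the function $F_Y$ of Definition \ref{def51}: by Lemma \ref{rlem24} the set $\mathcal{A}^* = \mathcal{A}\cap\mathcal{B}^*$ is a translate by $n_H^*$ of the digit-restricted set $\mathcal{A}_1 \cap [0,Y)$ with $Y := 10^{k-H+1}$, so
$$|S_{\mathcal{A}^*}(\theta)| = Y^{\log 9/\log 10}\, F_Y(\theta),$$
and $\#\mathcal{A}^* \asymp Y^{\log 9/\log 10}$. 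Fix a parameter $\sigma > 0$ and define
$$\mathcal{E} := \{\, 0 \le b < X \,:\, F_Y(b/X) \ge Y^{-\sigma}\,\}.$$
A dyadic partition of $\mathcal{E}$ by levels $F_Y(b/X) \sim 1/B$ with $B \le Y^\sigma$, combined with Lemma \ref{lem53} (and Lemmas \ref{lem55}--\ref{lem56} to transfer from denominator $Y$ to denominator $X = Y \cdot 10^{H-1}$, using that $10^{H-1} = (\log X)^{O(1)}$), gives $\#\mathcal{E} \ll Y^{235\sigma/154 + 59/433 + o(1)}$. Choosing $\sigma$ so that this exponent is $\le 23/40$ yields $\#\mathcal{E} \le X^{23/40}$.

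For the sum outside $\mathcal{E}$ I would split $[0,X) \setminus \mathcal{E}$ dyadically by $F_Y(b/X) \sim 1/B$ with $B > Y^\sigma$. Cauchy--Schwarz and Parseval's identity $\sum_b |S_\mathcal{C}(-b/X)|^2 = X\,\#\mathcal{C}$, together with the same counting estimate as in Lemma \ref{lem53}, yield
$$\sum_{b\,:\,F_Y(b/X)\,\sim\,1/B} \bigl|S_{\mathcal{A}^*}(b/X)\, S_\mathcal{C}(-b/X)\bigr| \ll \frac{Y^{\log 9/\log 10}}{B}\, \bigl(B^{235/154}Y^{59/433}\bigr)^{1/2}\,(X\,\#\mathcal{C})^{1/2}.$$
Since the exponent of $B$ here is $235/308 - 1 < 0$, the geometric series in $B$ converges and is dominated by the endpoint $B \sim Y^\sigma$. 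Using $\#\mathcal{C} \ll X/\phi(q)$ from the prime number theorem applied to the box $\mathcal{C}(\vec a, r, q, s)$, and substituting the chosen $\sigma$, yields after division by $X$ a bound of the form $\#\mathcal{A}^* \cdot X^{-\epsilon}$ for some $\epsilon > 0$.

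The hard part will be the numerical juggling. The exponent $23/40$ is exactly what comes out of balancing Lemma \ref{lem53} against the Cauchy--Schwarz/Parseval estimate on $S_\mathcal{C}$, and a naive Parseval bound on $S_\mathcal{C}$ may turn out to be too weak, forcing the use of the hybrid large-sieve estimates of Lemmas \ref{lem55}--\ref{lem56}, which exploit the restricted denominator and congruence structure of $\mathcal{C}(\vec a, r, q, s)$. The loss of $(\log X)^{O(C_1)}$ from working in the short interval $\mathcal{B}^*$, and the transfer from $F_Y$ at $c/Y$-rationals (the setting of Lemma \ref{lem53}) to $F_Y$ at $b/X$-rationals, must both be tracked carefully to avoid swamping the saving $X^{-\epsilon}$.
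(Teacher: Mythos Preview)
Your approach is essentially the same as the paper's: define $\mathcal{E}$ by the threshold $F(b/X)\ge X^{-23/80}$, bound $\#\mathcal{E}$ via Lemma~\ref{lem53}, and control the complementary sum by combining the level-set estimate of Lemma~\ref{lem53} with Parseval for $S_{\mathcal{C}}$. The only packaging difference is that the paper carries out a \emph{double} dyadic decomposition, in both $F_X(b/X)\sim 1/B$ and $|S_{\mathcal{C}}(b/X)|\sim X/C$ (the latter counted by their Lemma~\ref{lem101}, which is just Parseval), and then applies $\min(x,y)\le\sqrt{xy}$; your single dyadic decomposition in $B$ followed by Cauchy--Schwarz/Parseval is the same inequality in different clothing and yields the identical numerology $\tfrac{23}{80}\cdot\tfrac{73}{308}>\tfrac{59}{866}$.

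Your caveat at the end is overly cautious: the hybrid large-sieve Lemmas~\ref{lem55}--\ref{lem56} are \emph{not} needed here, plain Parseval on $S_{\mathcal{C}}$ suffices (the paper uses nothing more), and the $(\log X)^{O(C_1)}$ discrepancy between $Y=10^{k-H}$ and $X=10^k$ is harmless against the power saving $X^{-\epsilon}$.
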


\begin{proposition}\label{prop83}(Exceptional minor arcs)\\
Let $\mathcal{C}$ and $\mathcal{M}=\mathcal{M}(C_4)$ be as given in Proposition \ref{prop81}. Let $a_1, \ldots, a_{v-1}$ in the definition of $\mathcal{C}(\vec{a}, r, q, s)$ satisfy
$$\sum_{i\in \mathcal{I}} a_i \in \left[  \frac{9}{40}+\frac{\epsilon}{2},\ \frac{16}{25}-\frac{\epsilon}{2} \right] \cup  \left[  \frac{23}{40}+\frac{\epsilon}{2},\ \frac{16}{25}-\frac{\epsilon}{2} \right] $$
for some $\mathcal{I}\subseteq \{1, \ldots, v-1\}$ and let $C_4$ be sufficiently large. Let $\mathcal{E}\subseteq [0, X]$ be any set, such that $\#\mathcal{E}\leq X^{23/40}$. Then we have
$$\frac{1}{X}\sum_{\substack{b\in \mathcal{E} \\ b\not\in \mathcal{M}}} S_{\mathcal{A}^*} \left(\frac{b}{X}\right) S_{\mathcal{C}}\left(-\frac{b}{X}\right)=O\left( \frac{\#\mathcal{A}^*}{(\log X)^A} \right)\:.$$
The implied constant depends on $\eta$ but not on the $a_1, \ldots, a_{v-1}$.
\end{proposition}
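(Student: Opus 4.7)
\textit{Proof plan.}
The plan is to follow the exceptional minor arcs argument of Maynard \cite{maynard}, adapted to the local setting of this paper. Three ingredients enter: (i) the representation of $S_{\mathcal{A}^*}$ as a digital Fourier sum; (ii) a type~II factorization of $S_\mathcal{C}$ coming from the hypothesis on $\mathcal{I}$; and (iii) the hybrid large sieve bounds of Lemmas \ref{lem53}--\ref{lem56}.

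First, by Definition \ref{rdef23} and the choice of $n^*$ in Lemma \ref{rlem24}, every element of $\mathcal{A}^*$ has the form $n^*+n_{H,2}$ with $n_{H,2}\in\mathcal{A}_1\cap[0,Y)$ for $Y:=10^{k-H}$, so
$$|S_{\mathcal{A}^*}(b/X)|=Y^{\log 9/\log 10}\,F_Y(b/X),$$
translating any estimate for $F_Y$ into one for $S_{\mathcal{A}^*}$. Second, the hypothesis on $\sum_{i\in\mathcal{I}}a_i$ supplies the bilinear decomposition
$$S_\mathcal{C}\!\left(-\frac{b}{X}\right)=\sum_{m\sim M}\alpha_m\sum_{h\sim H'}\beta_h\,e\!\left(-\frac{mhb}{X}\right),$$
with $|\alpha_m|,|\beta_h|\leq 1$, $m=\prod_{i\in\mathcal{I}}p_i$, $h=\prod_{i\notin\mathcal{I}}p_i$, and $M$ in one of the two prescribed exponent windows; this is the type~II input that is missing from the generic case of Proposition \ref{prop82}.

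The main step is Cauchy--Schwarz in $m$ followed by opening the square in $h$. After expansion, the diagonal $h_1=h_2$ reduces to a weighted version of $\sum_{b\in\mathcal{E}}F_Y(b/X)^2$, which I control by the dyadic level sets $\mathcal{E}_B:=\{b\in\mathcal{E}\colon F_Y(b/X)\sim 1/B\}$, combining Lemma \ref{lem53} with the exceptional set bound $\#\mathcal{E}\leq X^{23/40}$ to truncate the large-$B$ tail. The off-diagonal contribution produces hybrid sums of the shape
$$\sum_{b\in\mathcal{E}\setminus\mathcal{M}}F_Y(b/X)^2\,e\!\left(-\frac{m(h_1-h_2)b}{X}\right),$$
to which the alternative hybrid bound of Lemma \ref{lem56} applies once one decomposes the effective modulus into its $10$-part and its $10$-coprime part, as in the hypothesis of that lemma. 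The minor arcs restriction $b\notin\mathcal{M}$, together with Lemma \ref{lem51}, prevents the denominators from being too small and keeps the singular modulus case under control.

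The main obstacle is the exponent bookkeeping. The endpoints $9/40$, $23/40$, $16/25$ in the hypothesis on $\sum_{i\in\mathcal{I}}a_i$ and the threshold $X^{23/40}$ on $\#\mathcal{E}$ are precisely tuned to balance the exponents $27/77$, $50/77$, $1/21$, $10/21$, $59/433$, $235/154$ that appear in Lemmas \ref{lem53}--\ref{lem56}. One must verify, for every admissible product $M$ in the allowed windows, every dyadic level $B$ of $F_Y$, and every exceptional set of size up to $X^{23/40}$, that the resulting bound stays below $\#\mathcal{A}^*(\log X)^{-A}$ for arbitrarily large $A$, provided $C_1,C_3,C_4$ are taken sufficiently large; here the role of $C_4$ is precisely to keep the error terms in the hybrid bound below the target. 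Once this numerical verification is completed, Proposition \ref{prop83} follows.
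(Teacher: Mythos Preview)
Your route diverges substantially from the paper's, and as written it has a genuine gap.

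The paper does not perform any Cauchy--Schwarz or diagonal/off-diagonal analysis here. Instead it quotes a single black-box bilinear estimate, Lemma~\ref{lem111} (which is Lemma~13.1 of \cite{maynard}): for $b$ in a set $\mathcal{F}(R,E)$ of frequencies with Dirichlet approximation $b/X=d/r+v$, $r\asymp R$, $|v|\asymp E/X$, one has
\[
\sum_{b\in\mathcal{F}\cap\mathcal{E}}\gamma_b F_X\!\left(\frac{b}{X}\right)\sum_{n\sim N}\sum_{m\sim M}\alpha_n\beta_m\,e\!\left(-\frac{bnm}{X}\right)\ll\frac{X(\log X)^{O(1)}}{(R+E)^{\epsilon/10}}.
\]
The proof of Proposition~\ref{prop83} then consists only of: (i) Dirichlet-partitioning $[0,X]$ into $O((\log X)^2)$ sets $\mathcal{F}(R,E)$ and noting that $b\notin\mathcal{M}$ forces $R+E\geq(\log X)^{C_3}$; (ii) splitting the congruence $\prod(\vec p)\equiv s\pmod q$ as $\mathcal{C}_{q,s}=\bigcup_t(\mathcal{C}^{(I)}_{q,t}\times\mathcal{C}^{(II)}_{q,t^{-1}s})$ to obtain the bilinear form in $n=\prod_{i\in\mathcal{I}}p_i$ and $m=\prod_{i\notin\mathcal{I}}p_i$; (iii) writing $S_{\mathcal{A}^*}(b/X)=\#\mathcal{A}^*\,\gamma_b\,F_X(b/X)$ and invoking Lemma~\ref{lem111}. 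The saving $(R+E)^{-\epsilon/10}$ beats any power of $\log X$ once $C_3,C_4$ are large.

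Your proposal instead attempts to redo, in outline, the work that is hidden inside Lemma~\ref{lem111}. Two concrete problems arise. First, ``Cauchy--Schwarz in $m$ followed by opening the square in $h$'' does not produce the off-diagonal shape you write: if you apply Cauchy--Schwarz over $m$ to $\sum_m\alpha_m\bigl(\sum_b\gamma_bF_Y(b/X)\sum_h\beta_h e(-mhb/X)\bigr)$ and expand the square, the cross terms mix $b_1,b_2,h_1,h_2$, not just $h_1,h_2$ with a single $b$ and a factor $F_Y(b/X)^2$. Second, even granting that expression, Lemma~\ref{lem56} bounds sums of $F_Y$ (not $F_Y^2$) over structured arithmetic families in $c/q+\eta$; it does not apply to a free sum over $b\in\mathcal{E}\setminus\mathcal{M}$ weighted by $F_Y^2$ and twisted by $e(-m(h_1-h_2)b/X)$. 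Finally, you never introduce the Dirichlet decomposition into $\mathcal{F}(R,E)$, which is precisely the mechanism by which the condition $b\notin\mathcal{M}$ is converted into a quantitative saving; invoking Lemma~\ref{lem51} for ``small denominators'' is not a substitute, since on the minor arcs the denominators can be of any size up to $X^{1/2}$.
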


\textit{Proof of Proposition \ref{prop72}}

\noindent By orthogonality we have
$$\#(\mathcal{C}\cap \mathcal{A}^*)=\frac{1}{X}\sum_{1\leq b\leq X} S_{\mathcal{A}^*} \left(\frac{b}{X}\right) S_{\mathcal{C}}\left(-\frac{b}{X}\right)\:.$$

Proposition \ref{prop72} now follows by the partition given by Proposition \ref{prop81}, \ref{prop82} and \ref{prop83}.


\section{$b$-variable Major Arcs}

In this section we establish Proposition \ref{prop82}. We split $\mathcal{M}^{(b)}$ up as three disjoint sets.

$$\mathcal{M}^{(b)}= \mathcal{M}_1\cup \mathcal{M}_2 \cup \mathcal{M}_3\:,$$
where
$$\mathcal{M}_1:=\left\{b\in \mathcal{M}^{(b)}\::\: \left|\frac{b}{X}-\frac{d}{r}\right|\leq \frac{(\log X)^{C_2}}{X}\ \text{for some}\ d, r\leq (\log X)^{C_3}, \ r\nmid X \right\}\:,$$

$$\mathcal{M}_2:=\left\{b\in \mathcal{M}^{(b)}\::\: \frac{b}{X}=\frac{d}{r}+v\ \text{for some}\ d, r \leq (\log X)^{C_3},\ r\mid X,\ 0<|v|\leq \frac{(\log X)^{C_3}}{X}\right\} \:,$$

$$\mathcal{M}_3:=\left\{b\in \mathcal{M}^{(b)}\::\: \frac{b}{X}=\frac{d}{r}+v\ \text{for some}\ d, r \leq (\log X)^{C_3},\ r\mid X\right\} \:.$$

By Lemma \ref{lem54} and recalling $X$ is a power of 10, we have
$$\sup_{b\in \mathcal{M}_1} \left| S_{\mathcal{A}^*}\left(\frac{b}{X}\right) \right|=\#\mathcal{A}^* \: \sup_{b\in \mathcal{M}_1} F_{10^{k-H}}\left(\frac{b}{X}\right) = O\left( \#\mathcal{A}^*\exp(-(\log X)^{-1/2+\epsilon})\right)\:.$$

Using the trivial bound 
$$S_{\mathcal{C}(\vec{a}, r, q, s)}=O(X(\log X)^B)$$ 
and noting that 
$$\mathcal{M}_1\ll (\log X)^{3B}\:,$$
we obtain
\[
\frac{1}{X}\sum_{b\in \mathcal{M}_1} S_{\mathcal{A}^*}\left(\frac{b}{X}\right) S_{\mathcal{C}}\left(-\frac{b}{X}\right)=O\left( \frac{\#\mathcal{A}^*}{(\log X)^A} \right)\:. \tag{9.1}
\]
This gives the result for $\mathcal{M}_1$. We now consider $\mathcal{M}_2$.\\
For $\vec{p}=(p_1, \ldots, p_v)$ we write $\vec{p}_{v-1}=(p_1, \ldots, p_{v-1})$, $\prod_{v-1}(\vec{p})=p_1\cdots p_{v-1}$:
$$S_{\mathcal{C}}=\sum_{\substack{\vec{p}_{v-1}=(p_1, \ldots, p_{v-1}) \\ p_j\in (X^{a_j}, X^{a_j+\gamma})}} \sum_{\substack{\prod(\vec{p}_{v-1})p_v \in \mathcal{B}^* \\  \prod(\vec{p}_{v-1})p_v \equiv s\bmod q }} e\left(\frac{b\prod_{v-1}(\vec{p})p_v}{X}\right)\:.$$

We note that if $b\in \mathcal{M}_2$, then
$$\frac{b}{X}=\frac{d}{r}+\frac{c}{X}\:,\ \ \text{for some integers}\ b, r, |c|\leq (\log X)^{C_4}\:,\ \text{($c$ is an integer since $r\mid X$)}\:.$$
We now chose $C_5>0$, $C_5\in\mathbb{Z}$, so large, that - after $C_1,\ldots, C_4$ have been chosen - the following considerations are true and set
$$\Delta:=\lceil \log X \rceil^{-C_5}\:.$$
We remark that $\Delta^{-1}$ is an integer.\\
We separate the sum $S_{\mathcal{C}}\left(\frac{b}{X}\right)$ by putting the prime variable $p_v$ in short intervals of length
$$\Delta(\#\mathcal{B}^*)/(p_1\cdots p_{v-1})$$
and in arithmetic progressions $\bmod [q, r]$. Thus, we have
$$\left|S_\mathcal{C}\left(\frac{b}{X}\right) \right| =\sum_{\vec{p}_{v-1}\::\: p_j\in (X^{a_j}, X^{a_j+\gamma})} \sum_{p_v<\frac{|\mathcal{B}^*|}{p_1\cdots p_{v-1}}} e(p_1\cdots p_{v-1}p_v)\:.$$

If $mp= j\Delta x+O(\Delta x)$ and $p\equiv u \bmod d$, then we have
$$e\left(mp\left(\frac{d}{r}+\frac{c}{X}\right)\right)=e\left(\frac{dum}{r}\right) e(jcx)+O(\Delta (\log X)^{C_4})\:.$$
By the Prime Number Theorem in short intervals and arithmetic progressions, we have
$$\sum_{p\in [j\Delta X/m, (j+1)\Delta X/m]}1= E\frac{\Delta X}{m}(1+O((\log X)^{-A})\:,$$
where $E=1$ if the system
\begin{eqnarray}
\left\{ 
  \begin{array}{l l}
   p\equiv u \bmod r \\ 
   p\equiv s \bmod q\:,\\
  \end{array} \right.
\nonumber
\end{eqnarray}
is solvable and $E=0$ otherwise,\\
with 
$$E\frac{\Delta X}{m}(1+O((\log X)^{-A})\leq \Delta |\mathcal{B}^*|\sup_{\substack{d\leq (\log X)^C \\ r\leq (\log X)^C}} \sum_{u \equiv l\bmod q}e\left(\frac{dum}{r}\right) \sum_{1\leq j < \Delta^{-1}} e(j\Delta c)\:. $$
We have
$$\sum_{1\leq j < \Delta^{-1}} e(j\Delta c)=e(-c)=-1=O(1)\:.$$
We finally obtain
$$\frac{1}{X}\sum_{b\in \mathcal{M}_2} S_{\mathcal{A}^*}\left(\frac{b}{X}\right) S_{\mathcal{C}}\left(-\frac{b}{X}\right)=O\left(  \frac{\#\mathcal{A}^*}{(\log X)^A}\right)\:,$$
where the implied elements depend on $\eta$ and $\gamma$, but not on the $a_j$.\\
Finally we consider $\mathcal{M}_3$:\\
For $(d, r)=1$ we have:
$$S_{\mathcal{C}}\left(\frac{d}{r}\right)=\sum_{0\leq u\leq r} e\left(\frac{du}{r}\right) \sum_{\substack{n\in \mathcal{B}^*\\ n\equiv u (\bmod r)\\ n\equiv s (\bmod q) }} 1 =\frac{1}{\phi([q, r])}\left( \sum_{n\in\mathcal{L}} 1 \right) \sum_{\substack{0< u< r \\ (u, r)=1 \\ r\equiv s (\bmod (q, r))}} e\left(\frac{du}{r}\right)\:.$$ 

The solution set of 
\begin{eqnarray}
\left\{ 
  \begin{array}{l l}
   n\equiv u \bmod r \\ 
   n\equiv s \bmod q\:,\\
  \end{array} \right.
\nonumber
\end{eqnarray}
is non-empty if and only if for the square-free kernels $r_0$ of $r$ the solution set of 
\begin{eqnarray}
\left\{ 
  \begin{array}{l l}
   n\equiv u \bmod r_0 \\ 
   n\equiv s \bmod q\:,\\
  \end{array} \right.
\nonumber
\end{eqnarray}
is non-empty.\\
For the exponential sum
$$\sum_{\substack{0< u< r \\ (u, r)=1}}e\left(\frac{du}{r}\right)$$ we have:
$$\sum_{g=0}^{\frac{r}{r_0}-1}e\left( \frac{d(s+gr_0)}{r}  \right)=e\left(\frac{du}{r}\right)\sum_{g=0}^{\frac{r}{r_0}-1}e\left(\frac{g}{r/r_0}\right)
=
\left\{ 
  \begin{array}{l l}
   0, &\text{if}\ r_0<r \\ 
   1, &\text{if}\ r_0=r \\ 
  \end{array} \right.
\nonumber$$
We finally obtain:
$$S_{\mathcal{C}(\vec{a}, r, q, s)}\left(\frac{d}{r}\right)=\sum_{\substack{0< u< r \\ (u, r)=1}}e\left(\frac{du}{r}\right)\frac{\phi(q)}{\phi([q, r])}
\sum_{n\in \mathcal{C}(\vec{a}, r, q, s)} 1_{\mathcal{C}(\vec{a}, r, q, s)}(u)\left(1+O((\log X)^{-A})  \right)$$
$$= \frac{\phi(q)}{\phi([q, r])} \sum_{n\in  \mathcal{C}(\vec{a}, r, q, s)} 1 \sum_{\substack{0< u< r \\ (u, r)=1}}e\left(\frac{du}{r}\right)\left(1+O((\log X)^{-A})  \right)$$
$$=\mu(r)  \frac{\phi(q)}{\phi([q, r])} \sum_{n\in  \mathcal{C}(\vec{a}, r, q, s)} 1\:.$$
Since $\mu(r)=0$ for $r\mid 10^k$, unless $r\in \{1, 2, 5, 10\}$ the estimate can easily be concluded.


\section{Generic Minor Arcs}

In this section we establish Proposition \ref{prop82} and obtain some bounds on the exceptional set $\mathcal{E}$ by using the estimates of Lemma \ref{lem54}.

\begin{lemma}\label{lem101}
Let $\mathcal{C}=\mathcal{C}(\vec{a},\gamma, q, s)$ as in Definition \ref{def71}. We have that
$$\#\left\{ 0\leq b< X\::\: \left|S_c\left(\frac{b}{X}\right)\right| \sim \frac{X}{C}  \right\} \ll  \frac{C^2 |\mathcal{C}|}{X}\:.$$
\end{lemma}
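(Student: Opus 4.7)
\textit{Proof plan for Lemma \ref{lem101}.}
The statement is a standard large-value bound for exponential sums obtained from Parseval's identity (discrete orthogonality), and in the large sieve literature this is sometimes called the $L^2$ large-value estimate. The plan is to compute the second moment of $|S_{\mathcal{C}}(b/X)|$ over $0 \le b < X$ and then deduce the claimed bound for the cardinality of the level set.

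First, I would note that $\mathcal{C} = \mathcal{C}(\vec a, \gamma, q, s) \subseteq \mathcal{B}^{*}$, and $\mathcal{B}^{*}$ is (by construction in Lemma \ref{rlem24} and Definition \ref{rdef27}) an interval of length $10^{k-H}\leq X$, so in particular $\mathcal{C}$ is contained in an interval of length at most $X$ and distinct elements of $\mathcal{C}$ have distinct residues modulo $X$. Therefore discrete orthogonality gives
\[
\sum_{b=0}^{X-1} \Bigl|S_{\mathcal{C}}\!\left(\tfrac{b}{X}\right)\Bigr|^{2}
= \sum_{b=0}^{X-1}\sum_{n_{1},n_{2}\in\mathcal{C}} e\!\left(\tfrac{b(n_{1}-n_{2})}{X}\right)
= X\cdot\#\mathcal{C} = X|\mathcal{C}|.
\]

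Now let $N_{C}:=\#\{0\le b<X : |S_{\mathcal{C}}(b/X)|\sim X/C\}$. By definition each such $b$ contributes at least a constant multiple of $(X/C)^{2}$ to the second moment, so
\[
N_{C}\cdot \left(\tfrac{X}{C}\right)^{2} \ll \sum_{\substack{0\le b<X \\ |S_{\mathcal{C}}(b/X)|\sim X/C}} \Bigl|S_{\mathcal{C}}\!\left(\tfrac{b}{X}\right)\Bigr|^{2} \le X|\mathcal{C}|,
\]
which rearranges to $N_{C}\ll C^{2}|\mathcal{C}|/X$, as required.

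I do not expect a genuine obstacle here: the argument is a one-line Parseval bound, and the only small point to verify is that $\mathcal{C}\subseteq \mathcal{B}^{*}$ sits inside an interval of length at most $X$ so that the exponentials $e(bn/X)$, $n\in\mathcal{C}$, form part of an orthogonal system on $\mathbb{Z}/X\mathbb{Z}$. In fact the same bound could be obtained by invoking the standard large sieve (Lemma \ref{lem57}) with the $X$ well-spaced points $b/X$ and $\delta=1/X$, $N=X$, which yields $\sum_{b}|S_{\mathcal{C}}(b/X)|^{2}\ll X|\mathcal{C}|$, but the direct orthogonality argument is cleaner and gives the stated constant.
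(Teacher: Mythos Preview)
Your proposal is correct and follows essentially the same route as the paper's proof: compute the second moment $\sum_{b<X}|S_{\mathcal{C}}(b/X)|^{2}=X|\mathcal{C}|$ by Parseval (discrete orthogonality) and then apply a Chebyshev-type inequality to bound the level set. You in fact supply slightly more detail than the paper does, by explicitly noting that $\mathcal{C}\subseteq\mathcal{B}^{*}$ lies in an interval of length $10^{k-H}<X$ so that the orthogonality relation applies cleanly.
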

\begin{proof}
We have
$$\sum_{b\::\: \left|S_{\mathcal{C}}\left(\frac{b}{X}\right)\right|^2\geq \frac{|\mathcal{C}|^2}{10 C^2}}  \left|S_{\mathcal{C}}\left(\frac{b}{X}\right)\right|^2  
\geq  \frac{|\#\mathcal{C}|^2}{10 C^2}  \: \#\left\{b\::\: \left|S_{\mathcal{C}}\left(\frac{b}{X}\right)\right| \geq \frac{\#\mathcal{C}}{10 C} \right\}\:.
$$
Thus
$$\#\left\{b\::\: |S_{\mathcal{C}(\vec{a}, \gamma)}|\geq \frac{|\mathcal{C}|}{10 C}  \right\}\leq \frac{10 C^2}{X^2} \sum_{b\leq X}  \left|S_{\mathcal{C}}\left(\frac{b}{X}\right)\right|^2 =\frac{10 C^2}{X^2}\: X|\mathcal{C}|\:,$$
the last identity following by Parseval's equation.
\end{proof}

\begin{lemma}\label{(Generic frequency bounds)}$ $\\
Let 
$$\mathcal{E}:=\left\{0\leq b\leq X \::\: F_X\left(\frac{b}{X}\right)\geq \frac{1}{X^{23/80}}  \right\}\:.$$
Then 
$$\#\mathcal{E} \ll X^{23/40-\epsilon}\:,$$
$$\sum_{b\in \mathcal{E}} F_X\left(\frac{b}{X}\right) \ll X^{23/80-\epsilon}\:,$$
and
$$\frac{1}{X} \sum_{\substack{b< X \\ b\not\in \mathcal{E}}} \left| F_X\left(\frac{b}{X}\right) S_{\mathcal{C}}\left(-\frac{b}{X}\right)  \right| \ll \frac{1}{X^\epsilon} \:.$$
\end{lemma}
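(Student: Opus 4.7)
All three estimates follow from a dyadic decomposition of the level sets of $F_X$. For each dyadic scale $B = 2^j$, let $\mathcal{E}_B := \{0 \leq b < X : F_X(b/X) \sim 1/B\}$; Lemma~\ref{lem53} provides the pointwise count $\#\mathcal{E}_B \ll B^{235/154} X^{59/433}$. The set $\mathcal{E}$ is the disjoint union of the $\mathcal{E}_B$ over a $O(\log X)$-element dyadic range $1 \leq B \ll X^{23/80}$, while the complement $\{b \notin \mathcal{E}\}$ is covered by the $\mathcal{E}_B$ with $B \gg X^{23/80}$.

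For the first estimate, summing $\#\mathcal{E}_B$ over dyadic $B \ll X^{23/80}$ is dominated by the largest scale, giving $\#\mathcal{E} \ll (\log X) X^{5405/12320 + 59/433}$. Clearing denominators to $5\,334\,560 = 12320 \cdot 433$ shows
$$23/40 - (5405/12320 + 59/433) = 127/5\,334\,560 > 0,$$
so $\#\mathcal{E} \ll X^{23/40 - \epsilon}$. The second estimate is the same computation weighted by $F_X \sim 1/B$: the extra factor $1/B$ shifts the dominant exponent to $1863/12320 + 59/433$, and the parallel check $23/80 - (1863/12320 + 59/433) = 127/5\,334\,560 > 0$ gives $\sum_{b \in \mathcal{E}} F_X(b/X) \ll X^{23/80 - \epsilon}$.

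For the third estimate I also use Parseval's identity $\sum_{b < X} |S_{\mathcal{C}}(-b/X)|^2 = X\,\#\mathcal{C}$ together with the trivial bound $\#\mathcal{C} \leq X$ (the product map identifies $\mathcal{C}$ with a subset of $\mathcal{B}^* \subseteq [1,X]$, with no multiplicities by unique factorisation). On each piece $\mathcal{E}_B$, Cauchy--Schwarz gives
$$\sum_{b \in \mathcal{E}_B} |F_X(b/X)\, S_{\mathcal{C}}(-b/X)| \leq \frac{1}{B} (\#\mathcal{E}_B)^{1/2}\Bigl(\sum_{b<X} |S_{\mathcal{C}}(-b/X)|^2\Bigr)^{1/2} \ll B^{-73/308} X^{1 + 59/866}.$$
Since $-73/308 < 0$, the dyadic geometric sum over $B \geq X^{23/80}$ is controlled by its smallest scale, producing a total bound $\ll (\log X) X^{1 + 59/866 - 1679/24640}$. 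With common denominator $10\,669\,120$ the arithmetic $59/866 - 1679/24640 = -127/10\,669\,120 < 0$ gives the required saving; dividing by $X$ yields $\ll X^{-\epsilon}$.

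The main obstacle is that the numerics are genuinely tight in all three parts: the exponents issued by Lemma~\ref{lem53} nearly saturate the desired bounds. After clearing denominators, each case reduces to a strict integer inequality, and in every case the positive surplus is only $127$ (out of denominators of order $5 \times 10^6$ or $10^7$). Once this tiny but genuine margin is confirmed --- essentially by the identities $23 \cdot 133\,364 - 5405 \cdot 433 - 59 \cdot 12\,320 = 127$ and $1679 \cdot 433 - 59 \cdot 12\,320 = 127$ --- all three estimates follow from the dyadic decomposition, with the $\log X$ factors from dyadic summation absorbed into the $X^\epsilon$ saving.
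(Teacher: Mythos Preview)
Your proof is correct and follows the paper's approach closely. Parts one and two are identical to the paper's argument: dyadic decomposition in the level sets of $F_X$ combined with Lemma~\ref{lem53}, reducing to the same tight numerical check (the surplus of $127$ you identify is exactly what the paper verifies). For Part three the paper instead performs a double dyadic decomposition in both $F_X(b/X)\sim 1/B$ and $|S_{\mathcal{C}}(-b/X)|\sim X/C$, bounds the count of such $b$ by $\min(C^2\#\mathcal{C}/X,\, B^{235/154}X^{59/433})$ via Lemmas~\ref{lem53} and~\ref{lem101}, and then applies $\min(x,y)\le\sqrt{xy}$; your single-dyadic Cauchy--Schwarz with Parseval is algebraically the same manoeuvre (it produces the identical bound $B^{-73/308}(X\#\mathcal{C})^{1/2}X^{59/866}$) and is arguably the cleaner presentation, since it avoids introducing the auxiliary parameter $C$ and the appeal to Lemma~\ref{lem101}.
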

\begin{proof}
The first bound on the size of $\mathcal{E}$ follows from using Lemma \ref{lem53} with \mbox{$B=X^{23/80}$} and verifying that
$$\frac{23\time 235}{80\times 154}+\frac{54}{433}<\frac{23}{40}\:.$$
For the second bound we see from Lemma \ref{lem54} that
\begin{align*}
\sum_{b\in\mathcal{E}} F_X\left(\frac{b}{X}\right) &\ll \sum_{\substack{j\geq 0 \\ 2^j\leq X^{23/80}}} \# \left\{ 0\leq b <X\::\: F_X\left(\frac{b}{X}\right) \sim 2^{-j} \right\}\\
&\ll \sum_{\substack{j\geq 0 \\ 2^j\leq X^{23/80}}} 2^{(235/154-1)j} X^{59/433} 
\ll X^{59/433+(23\times 235)/(80\times 154)-23/80}\:,
\end{align*}
and so the calculation above gives the result.\\
It remains to bound the sum over $b\not\in\mathcal{E}$. We divide the sum into $O((\log X)^2)$ subsums, where we restrict to these $b$, such that
$$ F_X\left(\frac{b}{X}\right)  \sim \frac{1}{B}\ \ \text{and}\ \ 
 \left|S_{\mathcal{C}}\left(\frac{b}{X}\right)\right| \sim \frac{|\mathcal{C}|}{C}$$
 for some $B\geq X^{23/80}$ and $C\leq X^2$ (terms with $C> X^2$ make a contribution O(1/X)). This gives
 $$ \frac{1}{X} \sum_{\substack{b< X \\ b\not\in \mathcal{E}}} \left|F_X\left(\frac{b}{X}\right) S_{\mathcal{C}}\left(-\frac{b}{X}\right) \right|$$ 
 $$\ll \sum_{\substack{X^{23/80}\leq B \\ 1\leq C\leq X^2}} \frac{(\log X)^2}{X} \ \sum_{\substack{b< X \\ F_X\left(\frac{b}{X}\right)\asymp \frac{1}{B} \\  S_{\mathcal{C}}\left(-\frac{b}{X}\right)\sim \frac{X}{C} }} \left|F_X\left(\frac{b}{X}\right) S_{\mathcal{C}}\left(-\frac{b}{X}\right) \right| +\frac{1}{X^2}\:. $$
 We concentrate on the inner sum:\\
 Using Lemmas \ref{lem53} and \ref{lem101} we see, that the sum contribution 
 $$\ll \frac{\#\mathcal{C}}{BC} \#\left\{ b\::\: F_X\left(\frac{b}{X}\right)\asymp \frac{1}{B}\:, \ S_{\mathcal{C}}\left(-\frac{b}{X}\right)\sim   \frac{X}{C}\right\}$$ 
 $$\ll \frac{\#\mathcal{C}}{BC}\min\left(C^2, B^{235/154} X^{50/435}\right) \ll X|\mathcal{C}|X^{2\epsilon} $$
Here we used the bound $\min(x, y)\leq X^{1/2}Y^{1/2}$ in the last line. In particular, we see this is $O(X^{1-2\epsilon})$ if $B\geq X^{23/80}$ on verifying that
$$\frac{23}{8}\times\frac{73}{308}>\frac{59}{866}\:.$$
Substituting this into our bound above gives the result.
\end{proof}


\section{Exceptional minor arcs}

\begin{lemma}\label{lem111}(Bilinear sum bound)$ $\\
Let $N, M, R\geq 1$ and $E$ satisfy
$$X^{9/25}\leq N\leq X^{17/40},\ R\leq X^{1/2}, NM\leq 1000X,\ \text{and}\ E\leq 100\:\frac{X^{1/2}}{R}$$
$$\text{and either}\ E\geq \frac{1}{X}\ \text{or}\ E=0.$$
Let $\mathcal{F}:=\mathcal{F}(R, E)$ be given by
$$\mathcal{F}:=\left\{ b<X\::\:\frac{b}{X}=\frac{d}{r}+v\ \text{for some $(d,r)=1$ with $r\asymp R$, $v=\frac{E}{X}$}\right\}\:.$$
Then for any 1-bounded complex sequences $\alpha_n, \beta_n, \gamma_b$ we have
$$\sum_{b\in \mathcal{F}\cap \mathcal{E}} \sum_{\substack{n\sim N \\ m\sim M}} \alpha_n \beta_m \gamma_b e\left(-\frac{bnm}{X}\right) \ll \frac{X(\log X)^{O(1)})}{(R+E)^{\epsilon/10}}\:.$$
\end{lemma}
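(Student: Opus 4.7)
The plan is to treat this as a Type~II bilinear estimate, handled via Cauchy--Schwarz together with the hybrid large sieve estimates of Section~5. Since $N\le X^{17/40}$ is the shorter variable, my first step is to apply Cauchy--Schwarz in $n$:
$$|S|^2\le N\sum_{n\sim N}\Bigl|\sum_{b\in\mathcal{F}\cap\mathcal{E}}\sum_{m\sim M}\gamma_b\beta_m\,e\!\left(-\tfrac{bnm}{X}\right)\Bigr|^2.$$
Expanding the square and executing the $n$-sum as a geometric progression gives $\min(N,\|(bm-b'm')/X\|^{-1})$. The diagonal $bm=b'm'$ is handled by standard divisor bounds, using $\#(\mathcal{F}\cap\mathcal{E})\le X^{23/40}$; the off-diagonal is the substantial term.

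For the off-diagonal, the crucial structural input is that for $b\in\mathcal{F}(R,E)$ one has $b/X=d/r+E/X$ with $(d,r)=1$, $r\asymp R$, so that $(bm-b'm')/X$ lies on a Farey-plus-shift grid of denominator $[r,r']$ shifted by $E(m-m')/X$. A dyadic decomposition by $\gcd(m-m',r)$ and by $[r,r']$ places the frequencies in the configuration handled by Lemmas~\ref{lem55} and~\ref{lem56}. Applying them, combining with the diagonal, and using $NM\le 1000X$, $R\le X^{1/2}$, $ER\le 100X^{1/2}$ should yield $|S|^2\ll X^2(\log X)^{O(1)}(R+E)^{-\epsilon/5}$, from which the lemma follows by taking square roots.

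The main obstacle will be producing a genuine \emph{power} saving $(R+E)^{-\epsilon/10}$ rather than merely a logarithmic one. This surplus is available because of the exponent $27/77<1/2$ in Lemmas~\ref{lem54}--\ref{lem56}, but squeezing it out forces a split into cases: Lemma~\ref{lem56} (which carries an extra factor $d^{3/2}$ and exploits the factorization $q=dq'$ with $(q',10)=1$) is needed when $R$ is sizeable but $E$ is small, while Lemma~\ref{lem55} is enough in the complementary regime. A secondary difficulty is that the calibration is tight --- namely $\tfrac{23}{40}+\tfrac{17}{40}=1$ --- so the Cauchy--Schwarz bound on the diagonal already saturates at $X$, and one cannot afford any loss during the Cauchy--Schwarz step; all slack must be supplied by off-diagonal cancellation coming from the large sieve.
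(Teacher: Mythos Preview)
The paper does not prove this lemma; it simply cites it as Lemma~13.1 of Maynard~\cite{maynard}. So the comparison is with Maynard's argument, and there your plan has a real gap.

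You correctly observe that after Cauchy--Schwarz in $n$ the diagonal $bm=b'm'$ contributes
\[
N\cdot N\cdot\#\{(b,m,b',m'):bm=b'm'\}\ \ll\ N^2\cdot\#(\mathcal{F}\cap\mathcal{E})\cdot M\cdot X^{o(1)}\ \ll\ X^{2+o(1)},
\]
using only $\#\mathcal{E}\le X^{23/40}$ and $N\le X^{17/40}$. But this already gives $|S|\ll X^{1+o(1)}$ with \emph{no} saving in $R+E$; once you have split into diagonal plus off-diagonal, no amount of off-diagonal cancellation can repair the diagonal term. So the sentence ``all slack must be supplied by off-diagonal cancellation'' cannot be made to work as written.

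There is a second, related problem: after your Cauchy--Schwarz the off-diagonal is a sum of $\min(N,\|(bm-b'm')/X\|^{-1})$, and the function $F_Y$ is nowhere in sight. Lemmas~\ref{lem55} and~\ref{lem56} bound sums of $F_Y$ over Farey-plus-shift configurations, not sums of $\min(N,\|\cdot\|^{-1})$; they do not apply to the expression you have produced.

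The missing idea, and what Maynard actually does, is to exploit the \emph{definition} of $\mathcal{E}$ before Cauchy--Schwarz: for $b\in\mathcal{E}$ one has $F_X(b/X)\ge X^{-23/80}$, hence $|\gamma_b|\le 1\le X^{23/80}F_X(b/X)$. Inserting this weight puts $F_X(b/X)$ into the $b$-sum, and now the hybrid/large-sieve bounds of Lemmas~\ref{lem54}--\ref{lem56} over $b\in\mathcal{F}(R,E)$ apply directly (this is where the Farey structure of $\mathcal{F}$ and the exponent $27/77$ are used). Cauchy--Schwarz is then performed with $F_X(b/X)$ as a weight, and both resulting factors are controlled by those lemmas together with the classical large sieve (Lemma~\ref{lem57}) for the bilinear piece in $n,m$. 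This is what produces the genuine power saving $(R+E)^{-\epsilon/10}$; without the $F_X$ insertion the argument stalls at the trivial bound.
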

\begin{proof}
This is Lemma 13.1 of \cite{maynard}.
\end{proof}

We now derive Proposition \ref{prop83} from Lemma 13.1.


\textit{Proof of Proposition \ref{prop83}}.\\
By symmetry, we may assume that $\mathcal{I}=\{1,\ldots, l_1\}$ for some $l_1<l$. By Dirichlet's theorem on Diophantine approximation, any $b\in [0,X]$ has a representation
$$\frac{b}{X}=\frac{d}{r}+\nu$$
for some integers $(d,r)=1$ with $r\leq X^{1/2}$ and some real $|\nu|\leq 1/X^{1/2}r$.\\
Thus we can partition $[0,X]$ into $O((\log X)^2)$ sets $\mathcal{F}(R,E)$ as defined by Lemma \ref{lem101} for different parameters $R,E$ satisfying 
$$1\leq R\leq X^{1/2}\ \text{and}\  E=0\ \text{or}\ \frac{1}{X}\leq E\leq \frac{100X^2}{R}\:.$$
Moreover, if $b\not\in \mathcal{M}^{(b)}$, then $b\in \mathcal{F}=\mathcal{F}(R,E)$ for some $R,E$ with 
$$R+E\geq  (\log X)^{C_3}\:.$$
Thus, provided $C_3$ is sufficiently large, we see that it is sufficient to show that 
\[
\frac{1}{X} \left| \sum_{b\in\mathcal{F}\cap \mathcal{E}} S_{\mathcal{A}}\left(\frac{b}{X}\right) S_{\mathcal{C}_{q,s}}\left(-\frac{b}{X}\right)   \right| \ll \frac{\#\mathcal{A}}{(R+E)^{\epsilon/20}}\:.    \tag{10.1}
\]
Recalling the Definition \ref{def71} 
$$\mathcal{C}_{q,s}:=\{ \vec{p}=(p_1, p_2, \ldots, p_l)\::\: p_i\in I_i\:, \Pi(\vec{p})\equiv s\: (\bmod q)\}$$
let
$$\mathcal{C}^{(I)}:=\bigtimes_{j\in \mathcal{I}} I_{i_j}\:,\ \mathcal{C}^{(II)} :=\bigtimes_{j\not\in \mathcal{I}} I_{i_j}\:, $$
such that
$$n\in \mathcal{C}_i^{(I)}\ \Rightarrow\ X^{9/25}\leq n\leq X^{17/40}.$$
We have (with $t^{-1}t\equiv 1\: (\bmod q)$):
$$\mathcal{C}_{q,s}=\bigcup_{\substack{ t\mod q \\ (t,q)=1 }} (\mathcal{C}_{q,t}^{(I)}\times \mathcal{C}_{q, t^{-1}s}^{(II)})$$
and thus
\begin{align*}
&\frac{1}{X}\sum_{n\in \mathcal{E}} S_{\mathcal{A}}\left(\frac{b}{X}\right)S_{\mathcal{C}_{q,s}}\left(-\frac{b}{X}\right)\\
&\ll \sum_{\substack{t \bmod q \\ (t,q)=1}}\ \ \sum_{b\in \mathcal{F}\cap \mathcal{E}} S_{\mathcal{A}}\left(\frac{b}{X}\right)    \sum_{\substack{n_1\sim N_1 \\ n_2\sim N_2}} \alpha_{n_1}\beta_{n_2}\: e\left(-\frac{bn_1n_2}{X}\right)\:,
\end{align*}
where
\begin{eqnarray}
\alpha_{n_1}:=\left\{ 
  \begin{array}{l l}
    1\:, & \quad \text{if $n_1\in \mathcal{C}_{q,t}^{(I)}$}\vspace{2mm}\\ 
    0\:, & \quad \text{otherwise}\:,\ \ \ \ 
  \end{array} \right.
\nonumber
\beta_{n_2}:=\left\{ 
  \begin{array}{l l}
    1\:, & \quad \text{if $n_2\in \mathcal{C}_{q,t^{-1}s}^{(II)}$}\vspace{2mm}\\ 
    0\:, & \quad \text{otherwise}\:.\\
  \end{array} \right.
\nonumber
\end{eqnarray}

Thus it suffices to show that 
\[ 
\frac{1}{X}\:\sum_{b\in\mathcal{F}\cap \mathcal{E}}S_{\mathcal{A}}\left(\frac{b}{X}\right) \sum_{n\sim N} \alpha_n \sum_{m\sim M} \beta_m\: e\left(-\frac{bnm}{X}\right)\ll \frac{\#\mathcal{A}^*}{(\log X)^{A}},   \tag{10.2}
\]
Let $\gamma_b$ be the 1-bounded sequence, satisfying 
$$S_{\mathcal{A}}\left(\frac{b}{X}\right)=\#\mathcal{A}\gamma_b F_X\left(\frac{b}{X}\right).$$
After substituting this expression for $S_{\mathcal{A}}$, we see that (10.2) follows immediately from Lemma \ref{lem101}, if the parameter $C_3$  is chosen sufficiently large.\\

\noindent\textbf{Acknowledgements}.\\
M. Th. Rassias: I would like to express my gratitude to Artur Avila and Ashkan Nikeghbali for their essential support throughout the preparation of this work.

\vspace{10mm}

\end{document}